\numberwithin{equation}{section}
\newtheorem{thm}{Theorem}[section]
\newtheorem{lemma}[thm]{Lemma}
\newtheorem{prop}[thm]{Proposition}
\newtheorem{cor}[thm]{Corollary}
{\theorembodyfont{\rmfamily}

\newtheorem{rmk}[thm]{Remark}
}
\newcommand{\qed}{\hfill \mbox{\raggedright \rule{.07in}{.1in}}}
\newenvironment{proof}{\vspace{1ex}\noindent{\bf
Proof}\hspace{0.5em}}{\hfill\qed\vspace{1ex}}
\newenvironment{pfof}[1]{\vspace{1ex}\noindent{\bf Proof of
#1}\hspace{0.5em}}{\hfill\qed\vspace{1ex}}
\newcommand{\R}{{\mathbb R}}
\newcommand{\Z}{{\mathbb Z}}
\newcommand{\E}{{\mathbb E}}
\newcommand{\BBM}{{\mathbb M}}
\newcommand{\BBW}{{\mathbb W}}
\newcommand{\bfW}{{\mathbf W}}
\newcommand{\hBBM}{{\widehat{\mathbb M}}}
\newcommand{\x}{x^{(\epsilon)}}
\newcommand{\hx}{\hat x_{\epsilon}}
\newcommand{\cF}{{\mathcal F}}
\newcommand{\cG}{{\mathcal G}}
\newcommand{\cW}{{\mathcal W}}
\newcommand{\hv}{{\hat v}}
\newcommand{\hW}{{\widehat W}}
\newcommand{\tD}{{\widetilde D}}
\newcommand{\tM}{{\widetilde M}}
\newcommand{\eps}{\epsilon}
\newcommand{\Leb}{\operatorname{Leb}}
\newcommand{\diam}{\operatorname{diam}}
\newcommand{\SMALL}{\textstyle}
\title{Iterated invariance principle for slowly mixing dynamical systems}
\author{
Matt Galton\thanks{Mathematics Institute, University of Warwick, Coventry, CV4 7AL, UK}
\and 
Ian Melbourne\thanks{Mathematics Institute, University of Warwick, Coventry, CV4 7AL, UK}
}
\date{27 April 2020. Revised 3 March 2021.}
\begin{document}

 \maketitle

 \begin{abstract}
We give sufficient Gordin-type criteria for the iterated (enhanced) weak invariance principle to hold for deterministic dynamical systems.
Such an invariance principle is intrinsically related to the interpretation of stochastic integrals. We illustrate this with examples of deterministic fast-slow systems where our iterated invariance principle yields convergence to a stochastic differential equation.
 \end{abstract}

 \section{Introduction} 
 \label{sec-intro}

Recently, there has been a great deal of interest in homogenisation of deterministic systems with multiple timescales~\cite{CFKMZ19,CFKMZsub,Dolgopyat04,Dolgopyat05,GM13b,KM16,KM17,KKM18,KKMsub,MS11}; the aim is to prove convergence to a stochastic differential equation (SDE) as the separation of timescales increases.
The papers~\cite{GM13b,MS11} considered some simplified situations where it sufficed that the fast dynamics satisfies the weak invariance principle (WIP).
In general, however, there are issues regarding the correct interpretation of stochastic integrals (It\^o, Stratonovich, ...) in the limiting SDE that are not resolved by the WIP.  According to rough path theory~\cite{FrizHairer,FrizVictoir,Lyons98}, it is necessary to consider an iterated (or enhanced) WIP in order to determine the stochastic integrals.
Kelly \& Melbourne~\cite{KM16,KM17} applied rough path theory in the deterministic setting and reduced homogenisation theorems to establishing the iterated WIP and suitable moment control.  The conditions on moments were optimized in
Chevyrev {\em et al.}~\cite{CFKMZ19,CFKMZsub}.

The current paper is 
based on results of the first author in his Ph.\ D. thesis~\cite{GaltonPhD} and aims
to extend the class of dynamical systems for which the iterated WIP holds.  There is already a wealth of literature on the central limit theorem (CLT) and WIP for large classes of dynamical systems in both the dynamical systems and probability theory literature~\cite{DedeckerRio00,Gordin69,Heyde75,HofbauerKeller82,Keller80,Liverani96,MN05,TyranKaminska05,Volny93}.  We slightly extend the class of systems for which the WIP holds, and greatly extend the class of systems for which the iterated WIP holds.

Our approach is based on Gordin's method~\cite{Gordin69} for proving limit theorems via martingale approximation.
It is well-known that the $L^2$-criterion of Gordin~\cite{Gordin69} leads to the CLT and WIP, and it follows from~\cite{KM16} that the iterated WIP holds under this criterion (see~\cite[Proposition~2.5]{DMNapp}).
Proving the same results under the $L^1$ version of this criterion (hypotheses~\eqref{eq:H} and~\eqref{eq:Hinv} in this paper) is more delicate.
The CLT was obtained by~\cite{Gordin73} and much later the WIP was obtained by~\cite{DedeckerRio00}.  The WIP in~\cite{DedeckerRio00} is not quite in the right form for dynamical systems; in this paper we modify it so that it applies to dynamical systems by extending a time-reversal argument from~\cite{KM16}.

Previously there were no results on the iterated WIP under $L^p$ Gordin criteria for $p<2$ (except where there is additional Young tower structure, see~\cite{MV16} and~\cite[Section~10]{KM16}).  Addressing this is the main aim of this work.
In the noninvertible setting (Section~\ref{sec:Non}), we 
prove the iterated WIP under the $L^1$ Gordin criterion.
In the invertible setting (Section~\ref{sec:inv}), the validity of the iterated WIP under the $L^1$ Gordin criterion remains unresolved.  However, we prove the iterated WIP under a hybrid $L^1$--$L^2$ criterion~\eqref{eq:Hinv2} which is still a significant improvement on existing results.

The remainder of the paper is organised as follows. 
In Section~\ref{sec:Non}, we present our main results in the noninvertible setting.  Section~\ref{sec:inv} deals with the invertible setting.
In Section~\ref{sec:ex}, we consider some illustrative examples and in Section~\ref{sec:fs} we give an application to homogenisation of fast-slow systems.

\vspace{-2ex}
\paragraph{Notation}
For $a,b\in\R^d$, we define the outer product
$a\otimes b=ab^T\in\R^{d\times d}$.
For $J\in \R^{d\times d}$, we write
$|J|=\big(\sum_{i,j=1}^d J_{ij}^2\big)^{1/2}$.

For real-valued functions $f,\,g$, the integral $\int f\,dg$ denotes
the It\^o integral (where defined).
Similarly, for $\R^d$-valued functions,
$\int f\otimes dg$ denotes matrices of It\^o integrals.

We use ``big O'' and $\ll$ notation interchangeably, writing $a_n=O(b_n)$ or $a_n\ll b_n$
if there are constants $C>0$, $n_0\ge1$ such that
$a_n\le Cb_n$ for all $n\ge n_0$.

\section{Noninvertible setting}
\label{sec:Non}

Let $(\Lambda,\cF,\mu)$ be a probability space and 
$T:\Lambda\to\Lambda$ be an ergodic measure-preserving map.
Let $P:L^1\to L^1$ be the associated transfer operator (so
$\int_\Lambda Pv\,w\,d\mu=\int_\Lambda v\,w\circ T\,d\mu$
for $v\in L^1$, $w\in L^\infty$).
Also define the Koopman operator $Uv=v\circ T$.
We recall that 
\[
PU=I \quad\text{and} \quad  UP=\E(\cdot|T^{-1}\cF).
\]

Let $v\in L^\infty(\Lambda,\R^d)$ with $\int_\Lambda v\,d\mu=0$.
Our underlying hypothesis throughout this section is the
$L^1$ Gordin criterion
\begin{equation} \label{eq:H}
\sum_{n=1}^\infty |P^nv|_1<\infty.
\end{equation}
Under this hypothesis, it is well-known that the CLT and WIP hold.
We mention~\cite{Liverani96,DedeckerRio00} for this and related results.
Our aim is to prove the iterated (or enhanced) version of the WIP.
Previously, this was proved in~\cite[Section~4]{KM16} under the more restrictive assumption $\sum_{n=1}^\infty |P^nv|_2<\infty$.

Define the sequences of c\`adl\`ag processes
\[
W_n\in D([0,\infty),\R^d),\qquad
\BBW_n\in D([0,\infty),\R^{d\times d}),
\]
by
\begin{equation} \label{eq:W}
W_n(t)=\frac{1}{\sqrt n}\sum_{0\le j\le[nt]-1}v\circ T^j, \qquad
\BBW_n(t)=\frac1n\sum_{0\le i< j\le [nt]-1}(v\circ T^i)\otimes (v\circ T^j).
\end{equation}

\begin{thm} \label{thm:IWIP}
Let $v\in L^\infty(\Lambda,\R^d)$ with $\int_\Lambda v\,d\mu=0$,
and suppose that~\eqref{eq:H} holds.  Then 
\begin{itemize}
\item[(a)]
The limit
$\Sigma=\lim_{n\to\infty}\int_\Lambda W_n(1)\otimes W_n(1)\,d\mu\;\in \R^{d\times d}$
exists.
\item[(b)]
$\det\Sigma=0$ if and only if there exists $c\in\R^d$ nonzero 
and $h\in L^1$
such that $c\cdot v=h\circ T-h$.
\item[(c)] Let $\nu$ be any probability measure on $\Lambda$ absolutely continuous with respect to $\mu$ and regard 
$(W_n,\BBW_n)$ as a sequence of processes in $D([0,\infty),\R^d\times\R^{d\times d})$ on the probability space $(\Lambda,\nu)$.  

Then
$(W_n,\BBW_n)\to_w (W,\BBW)$ 
as $n\to\infty$, where $W$ is a $d$-dimensional Brownian motion with covariance $\Sigma$ and
\[
\BBW(t)=\int_0^t W \otimes dW+t\sum_{j=1}^\infty \int_\Lambda v\otimes(v\circ T^j)\,d\mu.
\]
\end{itemize}
\end{thm}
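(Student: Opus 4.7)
The plan is to combine Gordin's martingale approximation with the iterated martingale CLT machinery of \cite{KM16}. Decompose $v = m + \chi\circ T - \chi$, where $\chi = \sum_{n\ge 1}P^n v\in L^1$ by \eqref{eq:H} and $m = v - \chi\circ T + \chi\in L^1$; the identity $Pv = \chi - P\chi$ forces $Pm = 0$, so $\{m\circ T^j\}_{j\ge 0}$ is a reverse martingale difference sequence with respect to $\{T^{-n}\cF\}_{n\ge 0}$. Part~(a) follows by expanding $\int W_n(1)\otimes W_n(1)\,d\mu$ with stationarity and the identity $\int v_a(v_b\circ T^k)\,d\mu = \int(P^kv_a)v_b\,d\mu$; each term is bounded by $|P^kv|_1|v|_\infty$, so \eqref{eq:H} gives absolute summability and the usual Green--Kubo limit. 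Part~(b) is standard: $c^\top\Sigma c = 0$ forces $c\cdot m = 0$ a.s., so $c\cdot v = c\cdot\chi\circ T - c\cdot\chi$ with $h = c\cdot\chi\in L^1$.

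For part~(c), set $M_n(t) = n^{-1/2}\sum_{j<[nt]}m\circ T^j$ and $\BBM_n(t) = n^{-1}\sum_{0\le i<j<[nt]}(m\circ T^i)\otimes(m\circ T^j)$. The plain WIP $W_n\to_w W$ on $(\Lambda,\mu)$ will follow from a martingale WIP for $M_n$ (a Dedecker--Rio / time-reversal argument adapted from \cite{KM16}) together with the telescoping identity $W_n(t) = M_n(t) + n^{-1/2}(\chi\circ T^{[nt]} - \chi)$, since $|\chi\circ T^{[nt]}|_1 = |\chi|_1$ makes the remainder vanish in probability. Next, substitute $v = m + \chi\circ T - \chi$ into $\BBW_n$, producing $\BBM_n$ plus three correction terms. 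Each correction is telescoped in its outer summation index, reducing to (i) boundary terms of the form $n^{-1/2}(\chi\circ T^{[nt]})\otimes M_n(t)$, which vanish in probability because $\chi\circ T^{[nt]}/\sqrt n\to 0$ in $L^1$ and $(M_n(t))_n$ is tight, and (ii) Birkhoff averages $n^{-1}\sum_{i<[nt]}\Phi\circ T^i$ for $L^1$ integrands $\Phi$ built from products of $v$, $m$, $\chi$, which converge by the ergodic theorem to $t\int\Phi\,d\mu$. A short computation using $Pm=0$, the adjoint identity $\int f(g\circ T)\,d\mu = \int (Pf)g\,d\mu$, and $\chi = \sum_{n\ge 1}P^n v$ then identifies the sum of these Birkhoff limits as exactly $t\sum_{j\ge 1}\int v\otimes(v\circ T^j)\,d\mu$. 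The passage from $\mu$ to an absolutely continuous $\nu$ is standard (see \cite[Section~2.3]{KM16}).

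The principal obstacle is the iterated martingale WIP $(M_n,\BBM_n)\to_w (W,\int_0^{\cdot}W\otimes dW)$, which in \cite[Section~4]{KM16} requires $m\in L^2$, whereas under \eqref{eq:H} only $m\in L^1$ is available. The plan is to exploit the bound $v\in L^\infty$ by truncating while preserving the martingale structure: write $m = m^L + r^L$ with
\[
m^L = m\mathbf{1}_{\{|m|\le L\}} - \E\bigl[m\mathbf{1}_{\{|m|\le L\}}\bigm| T^{-1}\cF\bigr],
\]
so that $|m^L|\le 2L$ and $Pm^L = 0$ (using that $PU=I$ gives $P\E[\,\cdot\mid T^{-1}\cF] = P$). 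Then $r^L = m - m^L$ is also a reverse martingale increment with $|r^L|_1\to 0$ as $L\to\infty$. The $L^2$ iterated martingale WIP of \cite{KM16} applies to the bounded piece $m^L$ yielding an $L$-dependent limit, while the contribution of $r^L$ to $(M_n,\BBM_n)$ is controlled uniformly in $n$ via the Burkholder inequality applied to reverse martingales. Letting $L\to\infty$ after $n\to\infty$ and verifying that the $L$-dependent limits converge to the claimed $(W,\int_0^\cdot W\otimes dW)$ closes the argument.
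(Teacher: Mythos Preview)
There are two genuine gaps.

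\textbf{First, $m\in L^2$ under \eqref{eq:H}, so your truncation scheme is both unnecessary and non-functional.} You assert that under the $L^1$ criterion only $m\in L^1$ is available, and then propose to split $m=m^L+r^L$ with $|r^L|_1\to 0$ and control the $r^L$-contributions by Burkholder. But Burkholder--Davis--Gundy in $L^1$ gives at best $\E\bigl|\sum_{j<n} r^L\circ T^j\bigr|\asymp \E\bigl[(\sum_{j<n}(r^L\circ T^j)^2)^{1/2}\bigr]$, and with only $|r^L|_1$ small there is no way to make this $o(\sqrt n)$; the right-hand side can be as large as $n|r^L|_1$ or $\sqrt n\,\E[\max_{j\le n}|r^L\circ T^j|]$, neither of which is controlled at the required scale. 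The iterated term is worse. The paper instead proves directly that $m\in L^2$: one approximates $\chi$ by the finite sums $\chi_1^k=\sum_{j=1}^k P^jv\in L^\infty$, sets $m^{(k)}=v-\chi_1^k\circ T+\chi_1^k-P^kv\in L^\infty\cap\ker P$, and shows by an elementary computation that $|m^{(k)}-m^{(\ell)}|_2^2\le 4|v|_\infty\sum_{n\ge \ell}|P^nv|_1$, so $m^{(k)}$ is Cauchy in $L^2$ with limit $m$. This is the key technical fact (originally Kipnis--Varadhan; see also Liverani, Dedecker--Rio), and once you have it the martingale WIP for $M_n$ follows from the standard $L^2$ machinery with no truncation.

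\textbf{Second, your choice of $\BBM_n$ with $m$ in both slots makes the correction terms non-integrable.} Substituting $v=m+\chi\circ T-\chi$ into both factors of $\BBW_n$ produces, after telescoping, Birkhoff averages of quantities such as $(\chi\otimes m)\circ T^j$. Since $\chi$ is only $L^1$ and $m$ is only $L^2$, the product $\chi\otimes m$ need not be in $L^1$, so the pointwise ergodic theorem does not apply. The paper sidesteps this by defining the intermediate process with $m$ in the first slot and $v$ (not $m$) in the second: $\BBM_n(t)=n^{-1}\sum_{0\le i<j\le[nt]-1}(m\circ T^i)\otimes(v\circ T^j)$. Then $\BBW_n-\BBM_n$ reduces to a Birkhoff sum of $\chi\otimes v\in L^1$ (using $v\in L^\infty$) plus a boundary term, and the ergodic theorem gives the drift $t\int\chi\otimes v\,d\mu=t\sum_{j\ge1}\int v\otimes(v\circ T^j)\,d\mu$ directly. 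The iterated WIP for $(W_n,\BBM_n)$ is then obtained by a time-reversal argument and Kurtz--Protter, exploiting that $m\in L^2\cap\ker P$ and that the adapted integrand $v$ is bounded.
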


\begin{rmk} \label{rmk:GK}
A standard calculation using~\eqref{eq:H} and Theorem~\ref{thm:IWIP}(a) yields the Green-Kubo formula
\[
\Sigma=\int_\Lambda v\otimes v\,d\mu+\sum_{j=1}^\infty \int_\Lambda \big\{v\otimes (v\circ T^j)+(v\circ T^j)\otimes v\big\}\,d\mu.
\]
\end{rmk}

\begin{rmk}  The assumption that $T$ is noninvertible is not assumed explicitly in Theorem~\ref{thm:IWIP}, but hypothesis~\eqref{eq:H} implies that $v\equiv0$ when $T$ is invertible.  
\end{rmk}

\begin{rmk}
There are various possible extensions to Theorem~\ref{thm:IWIP}:
\\[.75ex]
(1) Let $1\le p,q\le\infty$ with $\frac1p+\frac1q=1$. Dedecker \& Rio~\cite{DedeckerRio00} consider unbounded functions $v\in L^p(\Lambda,\R)$ and prove that the ordinary WIP $W_n\to_w W$ holds provided the $1$-norm in~\eqref{eq:H} is replaced by the $q$-norm.
(In fact it suffices that $\sum_{n=1}^\infty P^nv$ converges in $L^q$ in~\cite{DedeckerRio00}.)
A natural question is to prove the iterated WIP $(W_n,\BBW_n)\to_w(W,\BBW)$ under such assumptions. However, the main motivation for studying the iterated WIP is its fundamental role in the theory of fast-slow systems (considered further in Section~\ref{sec:fs}) where it is standard to consider bounded $v$.
Also, considering unbounded $v$ would exacerbate the issues regarding
hypotheses~\eqref{eq:Hinv} and~\eqref{eq:Hinv2} in the invertible setting.
Hence we restrict in this paper to the case of bounded $v$.
\\[.75ex]
(2)
Dedecker \& Rio~\cite{DedeckerRio00} 
prove a nonergodic version of the WIP
following Voln\'y~\cite{Volny93}. It seems likely that a nonergodic version of the iterated WIP holds (paying due attention to the limit of $\BBW_n(1)-\BBM_n(1)$ in the proof of Theorem~\ref{thm:IWIP}(c)). Again, ergodicity of $\mu$ is a standard assumption in the motivating setting of fast-slow systems, and is assumed throughout this paper.
\\[.75ex]
(3) A third possible extension is to consider limits of $(W_n(s),\BBW_n(t))$ in
the space $D([0,\infty)\times[0,\infty),\R^d\times\R^{d\times d})$. This seems to involve a nontrivial extension of~\cite{JakubowskiMeminPages89,KurtzProtter91} and hence is beyond the scope of this paper.
\end{rmk}

Hypothesis~\eqref{eq:H} 
can be viewed as a slow mixing condition: we recall the following elementary result.
\begin{prop} \label{prop:H}
Let $v\in L^\infty(\Lambda,\R)$ with $\int_\Lambda v\,d\mu=0$.  
Suppose that there exists $a_n>0$ such that
\[
\Big|\int_\Lambda v\,w\circ T^n\,d\mu\Big|\le a_n|w|_\infty
\quad\text{for all $w\in L^\infty(\Lambda,\R)$, $n\ge1$.}
\]
Then
$|P^nv|_p\le |v|_\infty^{1-1/p}a_n^{1/p}$ for all $1\le p<\infty$. 
In particular, hypothesis~\eqref{eq:H} holds if $\sum_{n=1}^\infty a_n<\infty$.
\end{prop}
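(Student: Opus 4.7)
The plan is to translate the mixing-type hypothesis into an $L^1$ bound on $P^n v$ via duality, combine it with the elementary observation that $P$ is a contraction on $L^\infty$, and then interpolate to obtain the intermediate $L^p$ bounds.

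First, I would rewrite the hypothesis using the defining identity of the transfer operator: $\int_\Lambda v \cdot w\circ T^n\,d\mu = \int_\Lambda P^n v \cdot w\,d\mu$, so the hypothesis reads
\[
\Big|\int_\Lambda P^n v \cdot w\,d\mu\Big| \le a_n |w|_\infty \quad\text{for all } w\in L^\infty.
\]
Taking the supremum over $w$ with $|w|_\infty\le 1$ (which realizes $|P^nv|_1$, since $P^n v \in L^1$) yields $|P^n v|_1 \le a_n$. Next I would show that $|P^n v|_\infty \le |v|_\infty$. This follows by dualizing with $w\in L^1$: for $|w|_1\le 1$,
\[
\Big|\int_\Lambda P^n v \cdot w\,d\mu\Big| = \Big|\int_\Lambda v \cdot w\circ T^n\,d\mu\Big| \le |v|_\infty |w\circ T^n|_1 = |v|_\infty |w|_1,
\]
where the final equality uses that $T$ preserves $\mu$.

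Third, I would interpolate via the elementary identity
\[
|f|_p^p = \int_\Lambda |f|^{p-1}|f|\,d\mu \le |f|_\infty^{p-1}|f|_1,
\]
applied to $f = P^n v$, which gives $|P^n v|_p \le |v|_\infty^{(p-1)/p} a_n^{1/p} = |v|_\infty^{1-1/p} a_n^{1/p}$, as claimed. The final assertion is then immediate: the case $p=1$ gives $|P^n v|_1 \le a_n$, so $\sum_n a_n <\infty$ implies $\sum_n |P^n v|_1 < \infty$, which is precisely hypothesis~\eqref{eq:H}. Since each step is a one-line application of a standard tool (duality between $L^1$ and $L^\infty$, measure-preservation, and Hölder-type interpolation), there is no genuine obstacle; the only mild care required is in the $L^\infty$-contraction verification, where one must remember that $P$ is a contraction on every $L^p$ precisely because $T$ is measure-preserving.
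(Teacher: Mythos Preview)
Your argument is correct and is precisely the standard proof; the paper itself does not give a proof but simply refers to~\cite[Proposition~2.1]{MT02}, where essentially the same three-step argument (duality for the $L^1$ bound, $L^\infty$-contraction of $P$, and interpolation) appears. One small remark: in your second step the duality pairing $\int P^nv\cdot w\,d\mu=\int v\cdot w\circ T^n\,d\mu$ is a priori only stated for $w\in L^\infty$, so pairing $P^nv$ against arbitrary $w\in L^1$ tacitly assumes $P^nv\in L^\infty$; a cleaner route is to note directly that $P$ is positive with $P1=1$ (since $\int P1\cdot w\,d\mu=\int w\circ T\,d\mu=\int w\,d\mu$ for all $w\in L^\infty$), whence $|P^nv|\le P^n|v|\le|v|_\infty$.
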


\begin{proof}  
See for example~\cite[Proposition~2.1]{MT02}.
\end{proof}

Throughout the remainder of this section, $L^p$ is shorthand for $L^p((\Lambda,\mu),\R^d)$ unless stated otherwise.

\subsection{Martingales}
Let $v:\Lambda\to\R^d$ be an $L^\infty$ observable with mean zero  satisfying
hypothesis~\eqref{eq:H}, and
define
\[
\chi^k_\ell=\sum_{j=\ell}^k P^jv,\,\; 1\le\ell\le k<\infty
\qquad\text{and} \qquad \chi=\sum_{j=1}^\infty P^jv.
\]
It follows from our assumptions that $\chi^k_\ell\in L^\infty$ for all $\ell\le k$ and $\chi\in L^1$.  Moreover, $\chi^k_1\to\chi$ in $L^1$ as $k\to\infty$.
Following~\cite{M09b}, we write
\begin{equation} \label{eq:mart}
v=m^{(k)}+\chi^k_1\circ T-\chi^k_1+P^kv,\,\;k\ge1 
\qquad\text{and}\qquad
v=m+\chi\circ T-\chi.
\end{equation}
Since $PU=I$, it is easily verified from the definitions in~\eqref{eq:mart} that $m,\,m^{(k)}\in \ker P$ for all $k$.
It is immediate that 
$m^{(k)}\in L^\infty$ for all $k$, that $m\in L^1$ and that $m^{(k)}\to m$ in $L^1$.  
A somewhat surprising fact due originally to~\cite{KipnisVaradhan86}, see also~\cite{DedeckerRio00,GordinPeligrad11,Liverani96,MaxwellWoodroofe00,TyranKaminska05}, is that
$m\in L^2$.  We begin by recovering this fact using an elementary argument.

\begin{lemma} \label{lem:L2}
$m\in L^2$ and $m^{(k)}\to m$ in $L^2$ as $k\to\infty$.
\end{lemma}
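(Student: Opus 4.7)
The strategy is to show that $\{m^{(k)}\}$ is Cauchy in $L^2$. Since $m^{(k)}\to m$ in $L^1$ is already known, any $L^2$ limit must coincide a.e.\ with $m$, giving both conclusions simultaneously.

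For $k'>k$, subtracting the two instances of~\eqref{eq:mart} produces
\[
g := m^{(k')}-m^{(k)} = -\chi^{k'}_{k+1}\circ T + \chi^{k'}_{k+1} + P^kv - P^{k'}v,
\]
where $\chi^{k'}_{k+1}=\sum_{j=k+1}^{k'}P^jv\in L^\infty$. Since $Pm^{(k)}=Pm^{(k')}=0$, we have $g\in\ker P\cap L^\infty$. The key structural observation is that any $g\in\ker P$ satisfies $\int g\cdot(g\circ T^j)\,d\mu=\int(Pg)\cdot(g\circ T^{j-1})\,d\mu=0$ for $j\ge 1$, so by $T$-invariance $\{g\circ T^j\}_{j\ge 0}$ is an orthogonal family in $L^2$ (a reverse martingale difference sequence). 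Writing $S_nu=\sum_{j=0}^{n-1}u\circ T^j$, this yields the clean identity $|S_ng|_2^2=n|g|_2^2$. On the other hand, telescoping the coboundary terms gives
\[
S_ng = \chi^{k'}_{k+1} - \chi^{k'}_{k+1}\circ T^n + S_n(P^kv) - S_n(P^{k'}v),
\]
so that $\sqrt n\,|g|_2\le 2|\chi^{k'}_{k+1}|_\infty + |S_n(P^kv)|_2 + |S_n(P^{k'}v)|_2$.

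The main step is to bound $|S_n(P^kv)|_2$ by $C\sqrt n$ with a constant that tends to zero as $k\to\infty$. Expanding the square by $T$-invariance and the identity $\int u\cdot(u\circ T^j)\,d\mu=\int(P^ju)\cdot u\,d\mu$, then using the interpolation $|u|_2^2\le|u|_\infty|u|_1$, the $L^\infty$-contractivity of $P$, and $P^j(P^kv)=P^{j+k}v$, one obtains
\[
|S_n(P^kv)|_2^2 \le Cn\,|v|_\infty \sum_{j=k}^\infty |P^jv|_1,
\]
and similarly for $k'$. Dividing the previous inequality by $\sqrt n$ and letting $n\to\infty$, the bounded coboundary term $2|\chi^{k'}_{k+1}|_\infty/\sqrt n$ drops out and
\[
|m^{(k')}-m^{(k)}|_2 \le C'\Big(|v|_\infty\sum_{j\ge k}|P^jv|_1\Big)^{1/2},
\]
which tends to zero as $k\to\infty$ by hypothesis~\eqref{eq:H}. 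This establishes the Cauchy property and finishes the proof.

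The one delicate point is precisely the $|S_n(P^kv)|_2^2$ estimate: we need its $O(n)$ constant to be controlled by the $L^1$-tail $\sum_{j\ge k}|P^jv|_1$ rather than by the trivial bound $n|P^kv|_\infty^2$, which does not decay in $k$. This is the single place where the $L^1$ form of~\eqref{eq:H} is essential, and it enters through the interpolation $|u|_2^2\le|u|_\infty|u|_1$ applied to both the diagonal term $n|P^kv|_2^2$ and the off-diagonal correlations $\int(P^kv)\cdot(P^{j+k}v)\,d\mu$.
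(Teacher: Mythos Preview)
Your proof is correct and arrives at essentially the same quantitative bound
\[
|m^{(k')}-m^{(k)}|_2 \ll \Big(|v|_\infty\sum_{j\ge k}|P^jv|_1\Big)^{1/2},
\]
but by a genuinely different route. The paper exploits the algebraic identity $m^{(k)}-m^{(\ell)}=\chi_\ell^{k-1}-\chi_{\ell+1}^k\circ T$ directly: using that $m^{(k)}-m^{(\ell)}\in\ker P$ kills the $\circ\,T$ term in $\int(m^{(k)}-m^{(\ell)})^2\,d\mu$, and then the relation $P\chi_\ell^{k-1}=\chi_{\ell+1}^k$ collapses the remaining integral to $\int\big((\chi_\ell^{k-1})^2-(\chi_{\ell+1}^k)^2\big)\,d\mu$, which factors and is bounded by $|P^\ell v-P^kv|_\infty\,|\chi_\ell^{k-1}+\chi_{\ell+1}^k|_1$. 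This is a two-line computation once the identity is in hand. Your argument instead passes through Birkhoff sums: the martingale orthogonality $|S_ng|_2^2=n|g|_2^2$ transfers the problem to estimating $|S_n(P^kv)|_2^2$, which you handle by the elementary correlation expansion plus $L^\infty$--$L^1$ interpolation. Your detour through $n\to\infty$ to kill the $|\chi_{k+1}^{k'}|_\infty$ term is a nice device, and the correlation estimate you use here is in fact a simpler cousin of what the paper later does via Rio's inequality in Proposition~\ref{prop:Rio}. The paper's proof is shorter and more self-contained; yours is perhaps more transparent about why the $L^1$ tail $\sum_{j\ge k}|P^jv|_1$ is the right object, and it foreshadows the $|S_nv|_2^2=O(n)$ estimates that appear later.
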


\begin{proof}
Working componentwise, we can suppose without loss that $d=1$.
For $\ell< k$,
\begin{align} \label{eq:L2} \nonumber
m^{(k)}-m^{(\ell)} & =(\chi_1^k-\chi_1^k\circ T-P^kv)
-(\chi_1^\ell-\chi_1^\ell\circ T-P^\ell v) \\
& = \chi_{\ell+1}^k-\chi_{\ell+1}^k\circ T+P^\ell v-P^kv
 = \chi_{\ell}^{k-1}-\chi_{\ell+1}^k\circ T.
\end{align}
Hence
\begin{align*}
|m^{(k)}-m^{(\ell)}|_2^2 & = 
\int_\Lambda (m^{(k)}-m^{(\ell)})(\chi_{\ell}^{k-1}-\chi_{\ell+1}^k\circ T)\,d\mu
=\int_\Lambda (m^{(k)}-m^{(\ell)})\chi_{\ell}^{k-1}\,d\mu,
\end{align*}
where we used that $m^{(k)},\,m^{(\ell)}\in\ker P$.
Continuing and using~\eqref{eq:L2} once more,
\begin{align*}
|m^{(k)}-m^{(\ell)}|_2^2 & = \int_\Lambda (\chi_{\ell}^{k-1}-\chi_{\ell+1}^k\circ T)\chi_{\ell}^{k-1}\,d\mu
=\int_\Lambda \{(\chi_{\ell}^{k-1})^2 -
\chi_{\ell+1}^k\,P\chi_{\ell}^{k-1}\}\,d\mu
\\ & =\int_\Lambda \{(\chi_{\ell}^{k-1})^2 -
(\chi_{\ell+1}^k)^2\}\,d\mu
 =\int_\Lambda (\chi_{\ell}^{k-1}- \chi_{\ell+1}^k)
 (\chi_{\ell}^{k-1}+ \chi_{\ell+1}^k)\,d\mu
 \\ & =\int_\Lambda (P^\ell v-P^kv)
 (\chi_{\ell}^{k-1}+ \chi_{\ell+1}^k)\,d\mu
\\ & \le (|P^\ell v|_\infty+|P^kv|_\infty)(|\chi_{\ell}^{k-1}|_1+ |\chi_{\ell+1}^k|_1)
\le 4|v|_\infty\sum_{n=\ell}^\infty|P^nv|_1.
\end{align*}
It follows from~\eqref{eq:H} that $m^{(k)}$ is Cauchy in $L^2$.
By uniqueness of limits in $L^1$, the $L^2$ limit of $m^{(k)}$ coincides with $m$.
\end{proof}

Elements of $\ker P$ enjoy the following martingale structure.

\begin{prop}   \label{prop:mart}
Let $\phi\in L^1\cap\ker P$ and
fix $n\ge1$.
Define $\cG_j=T^{-(n-j)}\cF$, $1\le j\le n$.
Then $\{\phi\circ T^{n-j},\,\cG_j;\,1\le j\le n\}$ is a sequence of
{\em martingale differences}.  That is, $\cG_1\subset\dots\subset \cG_n$,
$\phi\circ T^{n-j}$ is $\cG_j$-measurable for each $j$,
and $\E(\phi\circ T^{n-j}|\cG_{j-1})=0$ for each $j$.
\end{prop}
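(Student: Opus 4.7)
The plan is to verify the three defining conditions of a martingale difference sequence separately, relying only on the identities $PU=I$, $UP=\E(\cdot\mid T^{-1}\cF)$, measure-preservation of $T$, and the hypothesis $\phi\in\ker P$.

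First, I would check that the $\cG_j$ form a filtration and that the measurability condition holds. Since $T$ is measurable and measure-preserving, $T^{-1}\cF\subset\cF$; iterating yields the decreasing chain $\cF\supset T^{-1}\cF\supset T^{-2}\cF\supset\cdots$. Because $n-j$ decreases as $j$ increases, $\cG_j=T^{-(n-j)}\cF$ is an increasing sequence, so $\cG_1\subset\cdots\subset\cG_n$. Measurability of $\phi\circ T^{n-j}$ with respect to $\cG_j=T^{-(n-j)}\cF$ is then immediate from the $\cF$-measurability of $\phi$.

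Next, I would establish the martingale difference property $\E(\phi\circ T^{n-j}\mid\cG_{j-1})=0$. Set $k=n-j$, so that $\cG_{j-1}=T^{-(k+1)}\cF$. By the defining property of conditional expectation, it suffices to show that $\int_A \phi\circ T^k\,d\mu=0$ for every $A\in T^{-(k+1)}\cF$. Write $A=T^{-(k+1)}B$ with $B\in\cF$, so that $\mathbf{1}_A=\mathbf{1}_B\circ T^{k+1}$. Using that $T$ preserves $\mu$ to pull back by $T^k$, and then the duality defining $P$,
\[
\int_A \phi\circ T^k\,d\mu=\int_\Lambda \phi\cdot(\mathbf{1}_B\circ T)\,d\mu=\int_\Lambda P\phi\cdot\mathbf{1}_B\,d\mu=0,
\]
where the last equality uses $\phi\in\ker P$.

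This is essentially a direct verification with no genuine obstacle; the only care needed is in the indexing, so that the conditioning sigma-algebra $\cG_{j-1}=T^{-(k+1)}\cF$ is strictly coarser by one step than the $\cG_j=T^{-k}\cF$ containing $\phi\circ T^k$. Equivalently, one may read the argument as the one-step identity $UP=\E(\cdot\mid T^{-1}\cF)$ applied to $\phi$ and then transported by the change of variable $T^k$, which makes the underlying mechanism transparent.
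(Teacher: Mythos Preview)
Your proof is correct and essentially matches the paper's. The paper compresses the martingale-difference step into the one-line identity $\E(\phi\circ T^{n-j}\mid\cG_{j-1})=\E(\phi\mid T^{-1}\cF)\circ T^{n-j}=(UP\phi)\circ T^{n-j}=0$, which is exactly the ``equivalent'' reading you mention at the end; your integral-against-indicators computation simply unpacks this identity.
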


\begin{proof}
Since $T^{-1}\cF\subset\cF$, it follows that $\cG_j\subset\cG_{j+1}$.
Measurability of $\phi\circ T^{n-j}$ with respect to $\cG_j$ is clear.
Finally,
\begin{align*}
\E(\phi\circ T^{n-j}|\cG_{j-1})
& =\E(\phi|T^{-1}\cF)\circ T^{n-j}
 =(UP\phi)\circ T^{n-j}=0,
\end{align*}
since $\phi\in\ker P$.
\end{proof}

\subsection{Second moments}

Throughout, we write $v_n=\sum_{j=0}^{n-1}v\circ T^j$,
$m_n=\sum_{j=0}^{n-1}m\circ T^j$ and so on for observables
$v,\,m,\,\ldots$ defined on $\Lambda$.

\begin{cor} \label{cor:Doob}
Let $\phi\in L^2\cap\ker P$.  Then
$\big|\max_{1\le\ell\le n}|\phi_\ell|\big|_2\le 4\sqrt n|\phi|_2$
for all $n\ge1$.
In particular, $\big|\max_{1\le\ell\le n}|(m-m^{(k)})_\ell|\big|_2\le 4\sqrt n|m-m^{(k)}|_2$
for all $k,n\ge1$.
\end{cor}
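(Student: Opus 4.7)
The plan is to apply Doob's $L^2$ maximal inequality to the martingale supplied by Proposition~\ref{prop:mart}, with a small reindexing trick to recover control of the \emph{forward} partial sums $\phi_\ell$ from the natural \emph{reverse} enumeration.

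Fix $n\ge1$ and set $d_j=\phi\circ T^{n-j}$ for $1\le j\le n$, with filtration $\cG_j=T^{-(n-j)}\cF$. By Proposition~\ref{prop:mart}, $\{d_j,\cG_j\}_{j=1}^n$ is a martingale difference sequence, so $M_k=\sum_{j=1}^k d_j$ is an $L^2$-martingale. Orthogonality of the differences together with measure-preservation gives $|M_n|_2^2=\sum_{j=1}^n|d_j|_2^2=n|\phi|_2^2$, and Doob's $L^2$ maximal inequality yields $\big|\max_{k\le n}|M_k|\big|_2\le 2|M_n|_2=2\sqrt{n}\,|\phi|_2$.

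The subtle point is that $M_k$ is not $\phi_k$: reindexing the sum defining $M_k$ by $i=n-j$ shows $M_k=\sum_{i=n-k}^{n-1}\phi\circ T^i=\phi_n-\phi_{n-k}$, where $\phi_0=0$. Thus $\max_{0\le k\le n}|M_k|=\max_{0\le \ell\le n}|\phi_n-\phi_\ell|$. Writing $\phi_\ell=\phi_n-(\phi_n-\phi_\ell)$ and using the triangle inequality in $L^2$,
\[
\Big|\max_{\ell\le n}|\phi_\ell|\Big|_2 \le |\phi_n|_2+\Big|\max_{\ell\le n}|\phi_n-\phi_\ell|\Big|_2 \le \sqrt{n}\,|\phi|_2+2\sqrt{n}\,|\phi|_2=3\sqrt{n}\,|\phi|_2,
\]
which is even sharper than the claimed constant $4$.

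For the final assertion, I just need to check that $\phi=m-m^{(k)}$ lies in $L^2\cap\ker P$, which it does: Lemma~\ref{lem:L2} gives $m,m^{(k)}\in L^2$, and both $m$ and $m^{(k)}$ belong to $\ker P$ by construction (see~\eqref{eq:mart}), so $m-m^{(k)}\in\ker P$ as well. Applying the inequality just established with this choice of $\phi$ gives the second statement. The only conceptual obstacle is the enumeration issue described above; once the identity $M_k=\phi_n-\phi_{n-k}$ is spotted, the rest is standard.
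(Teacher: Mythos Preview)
Your proof is correct and follows essentially the same route as the paper: both set up the reversed martingale $M_k=\sum_{j=1}^k\phi\circ T^{n-j}$, apply Doob's $L^2$ maximal inequality, and then recover $\phi_\ell$ from $M_k$ via the identity $M_k=\phi_n-\phi_{n-k}$. The only difference is in the final bookkeeping step: the paper uses the cruder pointwise bound $|\phi_\ell|\le 2\max_k|M_k|$ to get the constant $4$, whereas your use of $|\phi_n|_2=\sqrt n\,|\phi|_2$ separately yields the slightly sharper constant $3$.
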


\begin{proof}
Fix $n\ge1$ and let $X(j)=\phi\circ T^{n-j}$.
Since $\phi\in\ker P$, it follows that
$|X(1)+\dots+X(n)|_2 = \sqrt n|\phi|_2$.
By Proposition~\ref{prop:mart},
$\{X(j),\,\cG_j;\,1\le j\le n\}$ is a sequence of
martingale differences.  
Hence by Doob's inequality,
\[
\big|\max_{1\le\ell\le n}|X(1)+\dots+X(\ell)|\big|_2
\le 2|X(1)+\dots+X(n)|_2
= 2\sqrt n|\phi|_2.
\]
Finally, $\max_{1\le\ell\le n}|\phi_\ell|\le 2\max_{1\le\ell\le n}|X(1)+\dots+X(\ell)|$.
\end{proof}

Following~\cite{MN08}, we have a similar estimate for $v_n$.

\begin{prop} \label{prop:Rio}
$\big|\max_{1\le\ell\le n}|v_\ell|\big|_2^2\le 128\, n|v|_\infty\sum_{j=0}^\infty|P^jv|_1$.
\end{prop}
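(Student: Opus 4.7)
The plan is to reduce the maximal bound to Corollary~\ref{cor:Doob} via the martingale-coboundary machinery from Section~\ref{sec:Non}, with two new ingredients: a uniform $L^2$ bound on $m^{(k)}$ and a direct $L^2$ estimate on the residual $(P^kv)_\ell$.

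First I would strengthen Lemma~\ref{lem:L2} by re-running its telescoping argument with $\ell=0$ (adopting the conventions $\chi_1^0=0$ and $m^{(0)}=0$, under which the identity $m^{(k)}-m^{(\ell)}=\chi_\ell^{k-1}-\chi_{\ell+1}^k\circ T$ still holds and yields $m^{(k)}=\chi_0^{k-1}-\chi_1^k\circ T$). Since $m^{(k)}\in\ker P$ and $P\chi_0^{k-1}=\chi_1^k$, the cross term $\int m^{(k)}\chi_1^k\circ T\,d\mu$ vanishes, giving
\[
|m^{(k)}|_2^2 = |\chi_0^{k-1}|_2^2-|\chi_1^k|_2^2 = \int(v-P^kv)(\chi_0^{k-1}+\chi_1^k)\,d\mu \le 2|v|_\infty\cdot 2\sum_{j\ge 0}|P^jv|_1,
\]
where I used $|v-P^kv|_\infty\le 2|v|_\infty$ and $|\chi_0^{k-1}|_1+|\chi_1^k|_1\le 2\sum_{j\ge 0}|P^jv|_1$. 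Passing to the limit, $|m|_2^2\le 4|v|_\infty\sum_{j\ge 0}|P^jv|_1$, and Corollary~\ref{cor:Doob} applied to $m\in\ker P\cap L^2$ gives $\big|\max_{\ell\le n}|m_\ell|\big|_2^2\le 16n|m|_2^2\le 64n|v|_\infty\sum_{j\ge 0}|P^jv|_1$.

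Writing $v_\ell=m_\ell+(\chi\circ T^\ell-\chi)$, it remains to bound $\max_\ell|\chi\circ T^\ell-\chi|$ in $L^2$ at the same scale. The main difficulty is that under hypothesis~\eqref{eq:H} the series $\chi$ need not lie in $L^2$, so one cannot bound $|\chi\circ T^\ell|_2$ termwise and sum over $\ell$. To sidestep this I would work with the truncated decomposition $v_\ell = m^{(k)}_\ell + (\chi_1^k\circ T^\ell-\chi_1^k) + (P^kv)_\ell$ for a suitable $k\le n$: the middle piece is bounded pointwise by $2|\chi_1^k|_\infty\le 2k|v|_\infty$, while the residual satisfies the direct estimate
\[
|(P^kv)_\ell|_2^2 \le 2\ell|v|_\infty\sum_{j\ge k}|P^jv|_1,
\]
obtained by expanding $|(P^kv)_\ell|_2^2 = \sum_{0\le i,j<\ell}\int P^kv\cdot P^kv\circ T^{|j-i|}\,d\mu$ and using $|\int f\cdot f\circ T^d\,d\mu|\le |f|_\infty|P^df|_1$ with $f=P^kv$.

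The principal obstacle is lifting this pointwise-in-$\ell$ estimate on $(P^kv)_\ell$ to a bound on $\max_\ell|(P^kv)_\ell|$ without incurring an extra factor of $n$ or $\log n$. I would handle it by noting that $\tilde v=P^kv$ again satisfies~\eqref{eq:H} with $\sum_{j\ge 0}|P^j\tilde v|_1=\sum_{j\ge k}|P^jv|_1$, so the strengthened form of Lemma~\ref{lem:L2} bounds its martingale part $\tilde m$ by $|\tilde m|_2^2\le 4|v|_\infty\sum_{j\ge k}|P^jv|_1$, and Corollary~\ref{cor:Doob} controls $|\max_\ell \tilde m_\ell|_2^2$ at the right scale; balancing the choice of $k$ against the coboundary contribution $2k|v|_\infty$ and iterating the decomposition a bounded number of times absorbs the residual tail into further martingale maxima. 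Tracking the constants through these splittings (the factor of $2$ from one application of $(a+b)^2\le 2(a^2+b^2)$, the $16$ from Corollary~\ref{cor:Doob}, and the $4$ from the strengthened bound on $|m|_2^2$) should yield the claimed constant $128$.
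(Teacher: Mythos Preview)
Your strengthened version of Lemma~\ref{lem:L2} (the bound $|m^{(k)}|_2^2\le 4|v|_\infty\sum_{j\ge0}|P^jv|_1$) is correct, and so is the direct estimate $|(P^kv)_\ell|_2^2\le 2\ell|v|_\infty\sum_{j\ge k}|P^jv|_1$.  The gap is in the final paragraph: the iteration you sketch does not close.  From the decomposition $v_\ell=m^{(k)}_\ell+(\chi_1^k\circ T^\ell-\chi_1^k)+(P^kv)_\ell$ one gets, with $f(w)=\big|\max_{\ell\le n}|w_\ell|\big|_2$,
\[
f(v)\le 8\sqrt{n|v|_\infty\sum_{j\ge0}|P^jv|_1}+2k|v|_\infty+f(P^kv),
\]
and iterating with cut-offs $k_1,k_2,\dots$ (cumulative $K_r=k_1+\dots+k_r$, tails $\tau_r=\sum_{j\ge K_r}|P^jv|_1$) yields
\[
f(v)\le \sum_{r\ge0}\Big(8\sqrt{n|v|_\infty\,\tau_r}+2k_{r+1}|v|_\infty\Big)+f(P^{K_R}v).
\]
The coboundary costs sum to $2K_R|v|_\infty$; for this to fit inside $\sqrt{128\,n|v|_\infty\sum_{j\ge0}|P^jv|_1}$ you need $K_R\lesssim\sqrt{n\sum_{j\ge0}|P^jv|_1/|v|_\infty}$, which already pins down the order of $K_R$.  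But under hypothesis~\eqref{eq:H} there is \emph{no} rate on the tails $\tau_r$, so with $K_R$ constrained to this size the tail $\tau_R$ need not be small at all (take $|P^jv|_1\asymp j^{-2}$: then $\tau_{K_R}\asymp K_R^{-1}\asymp n^{-1/2}$ and the residual $f(P^{K_R}v)$ is still of order $n^{1/4}$, with no way to iterate further without blowing up the coboundary budget).  A ``bounded number'' of iterations cannot absorb the residual uniformly in $n$ and in $v$, and your accounting for the constant $128$ covers only the single martingale contribution, ignoring both the coboundary and the residual.

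The paper avoids this entirely: it applies Rio's maximal inequality (see~\cite[Proposition~7]{MerlevedePeligradUtev06}) to the reversed variables $X(j)=v\circ T^{n-j}$, getting $\big|\max_\ell|X(1)+\dots+X(\ell)|\big|_2^2\le 16\sum_{j=1}^n b_{j,n}$, and then uses the martingale-coboundary decomposition only to bound each $b_{j,n}$ by $2|v|_\infty\sum_{j\ge0}|P^jv|_1$.  This gives the maximal bound in one stroke, with no iteration and no coboundary residual to control; the factor $128$ comes from $16\cdot 2\cdot 4$ (the last $4$ from passing between reversed and forward maxima).  Rio's inequality is precisely the missing ingredient that converts your pointwise-in-$\ell$ information into a maximal estimate without the $\log n$ loss.
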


\begin{proof}
Fix $n\ge1$ and
define the random variables $X(j)=v\circ T^{n-j}$, $1\le j\le n$ which are adapted to the filtration $\cG_j=T^{-(n-j)}\cF$.
The version of Rio's inequality~\cite{Rio00} for $p=2$ 
in~\cite[Proposition~7]{MerlevedePeligradUtev06} states that
\[
\big|\max_{1\le\ell\le n}|X(1)+\cdots+X(\ell)|\big|_2^2\le 16\sum_{j=1}^n b_{j,n}
\]
where
\[
b_{j,n} =\max_{1\le j\le u\le n}|X(j)\sum_{k=j}^u\E(X(k)|\cG_j)|_1
\le |v|_\infty\max_{1\le j\le u\le n}\Big|\sum_{k=j}^u \E(v\circ T^{n-k}|\cG_j)\Big|_1.
\]
By Proposition~\ref{prop:mart},
$\E(m\circ T^{n-k}|\cG_j)=0$ for all
$k>j$.  By~\eqref{eq:mart},
\begin{align*}
\sum_{k=j}^u \E(v\circ T^{n-k}|\cG_j)
& =\E\big(m\circ T^{n-j}+ \chi\circ T^{n+1-j} -\chi\circ T^{n-u}|\cG_j\big)
\\ & =v\circ T^{n-j}+ \chi\circ T^{n-j} -\E\big(\chi\circ T^{n-u}|\cG_j\big).
\end{align*}
Hence
\[
b_{j,n}\le 
|v|_\infty(|v|_1+2|\chi|_1)\le 2|v|_\infty\sum_{j=0}^\infty|P^jv|_1
\]
and so
\[
\big|\max_{1\le\ell\le n}|X(1)+\cdots+X(\ell)|\big|_2^2\le 
32\, |v|_\infty\sum_{j=0}^\infty|P^jv|_1.
\]
Finally, $\max_{1\le\ell\le n}|v_\ell|\le 2\max_{1\le\ell\le n}|X(1)+\dots+X(\ell)|$.
\end{proof}

\begin{lemma} \label{lem:diff}
$\lim_{n\to\infty}\frac{1}{\sqrt n} \big|\max_{1\le\ell\le n}|(v-m)_\ell|\big|_2=0$.
\end{lemma}

\begin{proof}
By hypothesis~\eqref{eq:H} and Lemma~\ref{lem:L2}, for each $\eps>0$, there exists $k\ge1$ such that $\sum_{j=k}^\infty |P^jv|_1<\eps^2$ and
$|m-m^{(k)}|_2<\eps$.

Recall that $m,\,m^{(k)}\in\ker P$.
By Corollary~\ref{cor:Doob},
\begin{equation} \label{eq:diff1}
\frac{1}{\sqrt n}\big|\max_{1\le\ell\le n}|(m-m^{(k)})_\ell|\big|_2<4\eps.
\end{equation}

Next, $v=m^{(k)}+\chi_1^k\circ T-\chi_1^k+P^kv$, so
\begin{align*}
|(v-m^{(k)})_n|\le 
2|\chi_1^k|_\infty+|(P^kv)_n|
\le 2k|v|_\infty+|(P^kv)_n|.
\end{align*}
Note that $P^kv$ satisfies our underlying hypotheses, namely 
$P^kv\in L^\infty$, $\int_\Lambda P^kv\,d\mu=0$, 
$\sum_{n=1}^\infty |P^n(P^kv)|_1<\infty$.
Hence by Proposition~\ref{prop:Rio},
\begin{align*}
\big|\max_{1\le\ell\le n}|(v-m^{(k)})_\ell|\big|_2
& \le 
2k|v|_\infty+\big|\max_{1\le\ell\le n}|(P^kv)_\ell|\big|_2
\\ & \le 2k|v|_\infty + \Big\{128\,n|P^kv|_\infty\Big(\sum_{j=0}^\infty|P^{j+k}v|_1\Big)\Big\}^{1/2}
\\ & \le 2k|v|_\infty + \Big\{128\,n|v|_\infty\Big(\sum_{j=k}^\infty|P^jv|_1\Big)\Big\}^{1/2}
\ll k+\eps\sqrt n.
\end{align*}
Combining this with~\eqref{eq:diff1},
\(
\frac{1}{\sqrt n}\big|\max_{1\le\ell\le n}|(v-m)_\ell|\big|_2
 \ll \frac{1}{\sqrt n}k+\eps.
\)
Hence $\limsup_{n\to\infty} \frac{1}{\sqrt n}\big|\max_{1\le\ell\le n}|(v-m)_\ell|\big|_2\ll \eps$ and the result follows since $\eps$ is arbitrary.~
\end{proof}

\begin{pfof}{parts (a) and (b) of Theorem~\ref{thm:IWIP}}
Since $m\in\ker P$, it holds that
$\int_\Lambda m_n\otimes m_n\,d\mu=n\int_\Lambda m\otimes m\,d\mu$
for all $n$.
By Proposition~\ref{prop:Rio}, 
$|v_n|_2\ll n^{1/2}$.  Hence,
\begin{align*}
   \Big|
     n^{-1}\int_\Lambda v_n\otimes v_n \, & d\mu -    \int_\Lambda m\otimes m\,d\mu 
  \Big| 
 = n^{-1}  \Big|
     \int_\Lambda (v_n\otimes v_n -    m_n\otimes m_n)\,d\mu 
  \Big| 
\\ &  \le 
n^{-1}(|v_n|_2+|m_n|_2)|v_n-m_n|_2
\ll n^{-1/2}|v_n-m_n|_2\to0
\end{align*}
by Lemma~\ref{lem:diff}.
This proves part (a) and shows in addition that
\begin{equation} \label{eq:Sigma}
\Sigma =\int_\Lambda m\otimes m\,d\mu.
\end{equation}
It follows that
$c^T\Sigma c=\int_\Lambda (c\cdot m)^2\,d\mu$ for all $c\in\R^d$.

Next we prove part (b).
If $\det\Sigma=0$, then there exists $c\in\R^d$ nonzero such that $\Sigma c=0$ and hence 
$\int_\Lambda (c\cdot m)^2\,d\mu= c^T\Sigma c=0$,
so $c\cdot m=0$.
By~\eqref{eq:mart}, $c\cdot v=h\circ T-h$ where $h=c\cdot\chi\in L^1$.

Conversely, suppose that $c\cdot v=h\circ T-h$ for $c\in\R^d$
nonzero and $h\in L^1$.
Then $c\cdot Pv=h-Ph$.  Also,
$Pv=\chi-P\chi$, hence $c\cdot\chi-h\in\ker (P-I)$.
By ergodicity, $c\cdot\chi=h+aI$ for some $a\in\R$.
Substituting into~\eqref{eq:mart}, 
\[
c\cdot v = c\cdot m+c\cdot\chi\circ T-c\cdot\chi
=c\cdot m+h\circ T-h=c\cdot m+c\cdot v,
\]
and so $c\cdot m=0$.
Hence $c^T\Sigma c=\int_\Lambda (c\cdot m)^2\,d\mu=0$.
It follows that $\det\Sigma=0$.
\end{pfof}

\subsection{Iterated WIP}

In this subsection, we prove Theorem~\ref{thm:IWIP}(c).  First, we prove the ordinary WIP.

\begin{lemma} \label{lem:WIP}
$W_n\to_w W$ in $D([0,\infty),\R^d)$ as $n\to\infty$
on the probability space $(\Lambda,\mu)$.
\end{lemma}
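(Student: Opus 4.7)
The plan is to replace $W_n$ by a martingale process using the decomposition $v = m + \chi\circ T - \chi$ from \eqref{eq:mart}, apply a classical martingale invariance principle to the martingale process, and then show that the residual error is negligible using Lemma~\ref{lem:diff}. Concretely, I would set
\[
M_n(t) = \frac{1}{\sqrt n}\sum_{0\le j \le [nt]-1}m\circ T^j,
\]
so that $W_n(t) - M_n(t) = n^{-1/2}(v-m)_{[nt]}$ is a telescoping error driven by the coboundary $\chi\circ T - \chi$.

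First I would establish $M_n \to_w W$ in $D([0,\infty),\R^d)$, where $W$ is Brownian motion with covariance $\Sigma = \int_\Lambda m\otimes m\,d\mu$ (as in \eqref{eq:Sigma}). Since $m\in L^2\cap\ker P$ by Lemma~\ref{lem:L2}, Proposition~\ref{prop:mart} exhibits, for each $n$, the collection $\{m\circ T^{n-j},\,\cG_j\}_{j=1}^n$ as a stationary martingale difference array with increasing filtration, so a standard martingale FCLT for arrays applies. The two hypotheses are routine to check: using $\E(w\circ T^{n-j}\mid T^{-(n-j+1)}\cF) = (UPw)\circ T^{n-j}$, the conditional quadratic variation $n^{-1}\sum_{j\le [nt]}\E((m\circ T^{n-j})\otimes(m\circ T^{n-j})\mid\cG_{j-1})$ reduces to a Birkhoff ergodic average of $(UP(m\otimes m))\circ T^i$, which converges a.s.\ to $t\Sigma$; and the Lindeberg condition is immediate from $m\in L^2$ combined with stationarity and dominated convergence.

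Next I would control the error: for any $T>0$,
\[
\Big|\sup_{0\le t\le T}|W_n(t) - M_n(t)|\Big|_2 \le \frac{1}{\sqrt n}\Big|\max_{\ell\le [nT]}|(v-m)_\ell|\Big|_2 \longrightarrow 0
\]
by Lemma~\ref{lem:diff} (applied with $[nT]$ in place of $n$, using stationarity). Hence $\sup_{[0,T]}|W_n - M_n|\to 0$ in $\mu$-probability, and combining with the preceding step via Slutsky's theorem gives $W_n\to_w W$ in $D([0,T],\R^d)$ for each $T>0$, hence in $D([0,\infty),\R^d)$.

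I expect the main technical obstacle to be the clean translation between the reverse-time martingale structure (the FCLT for the array $\{m\circ T^{n-j}\}$ with $n$-dependent filtration $\cG_j^{(n)} = T^{-(n-j)}\cF$) and the forward-time process $M_n$: the FCLT directly yields convergence of $(m_n - m_{n-[nt]})/\sqrt n$ rather than of $M_n(t) = m_{[nt]}/\sqrt n$. Bridging these is precisely the ``time-reversal argument from \cite{KM16}'' mentioned in the introduction, relying on continuity of the reversal map on the Skorokhod space together with the invariance of Brownian motion under time reversal. Once this bridge is in place, the remaining details are routine bookkeeping.
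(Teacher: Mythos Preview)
Your proposal is correct and follows essentially the same route as the paper: define $M_n$ via $m$, verify the ergodic-average condition on $UP(m\otimes m)$, apply a martingale FCLT to get $M_n\to_w W$, and then use Lemma~\ref{lem:diff} to pass from $M_n$ to $W_n$. The one difference is that the paper invokes \cite[Theorem~A.1]{KKM18}, which is tailored to exactly this forward-time stationary setup and delivers $M_n\to_w W$ directly from the ergodic theorem, so the time-reversal bridge you anticipate is not needed here; the time-reversal machinery from \cite{KM16} is reserved for the iterated WIP in Lemma~\ref{lem:BBM}.
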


\begin{proof}
It suffices to prove that $W_n\to_w W$ in $D([0,K],\R^d)$ for each fixed integer $K\ge1$.
Define
$M_n(t)=\frac{1}{\sqrt n}\sum_{0\le j\le [nt]-1}m\circ T^j$.
Recall that $m\in L^2\cap\ker P$.
By the pointwise ergodic theorem and~\eqref{eq:Sigma},
\[
n^{-1}\sum_{j=0}^{n-1}\{UP(m\otimes m)\}\circ T^j\to \int_\Lambda UP(m\otimes m)\,d\mu=\int_\Lambda m\otimes m\,d\mu=\Sigma\quad a.e.
\]
It follows from~\cite[Theorem~A.1]{KKM18} that
$M_n\to_w W$ in $D([0,K],\R^d)$.
Also,
\[
\sup_{t\in[0,K]}|W_n(t)-M_n(t)|=
\frac{1}{\sqrt n}\max_{1\le\ell\le nK}|(v-m)_\ell|\to_p 0
\]
by Lemma~\ref{lem:diff}.
Hence $W_n\to_w W$ in $D([0,K],\R^d)$.
\end{proof}

Define the sequence of processes
\[
\BBM_n\in D([0,\infty),\R^{d\times d}), \qquad
\BBM_n(t)=\frac1n\sum_{0\le i< j\le [nt]-1}(m\circ T^i)\otimes (v\circ T^j).
\]

\begin{lemma} \label{lem:BBM}
$(W_n,\BBM_n)\to_w (W,\BBM)$ in $D([0,\infty),\R^d\times\R^{d\times d})$ as $n\to\infty$ 
on the probability space $(\Lambda,\mu)$,
where $\BBM(t)=\int_0^t W\otimes dW$.
\end{lemma}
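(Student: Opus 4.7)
The plan is to approximate $\BBM_n$ by the ``pure martingale'' auxiliary process
\[
\hBBM_n(t) = \frac1n\sum_{0\le i<j\le [nt]-1}(m\circ T^i)\otimes(m\circ T^j),
\]
for which the iterated WIP $(M_n,\hBBM_n)\to_w(W,\int_0^\cdot W\otimes dW)$ is the $L^2$ Gordin iterated WIP established in \cite[Section~4]{KM16}, applicable since $m\in L^2\cap\ker P$ by Lemma~\ref{lem:L2}. The remaining tasks are to show $\BBM_n-\hBBM_n\to_p0$ uniformly on compacts, and then to replace $M_n$ by $W_n$ using Lemma~\ref{lem:diff}.

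Using $v=m+\chi\circ T-\chi$ to substitute into the second factor of $\BBM_n$ and telescoping the resulting sum over $j$ (for each fixed $i$) yields
\[
\BBM_n(t)-\hBBM_n(t)=\frac1n m_{[nt]-1}\otimes(\chi\circ T^{[nt]})-\frac1n\sum_{i=0}^{[nt]-2}(m\circ T^i)\otimes(\chi\circ T^{i+1}).
\]
For the second (drift-like) sum, with $n'=[nt]-1$ and $\cG_k=T^{-(n'-k)}\cF$, the reverse-indexed sequence $\{m\circ T^{n'-k}\}$ consists of martingale differences (Proposition~\ref{prop:mart}) while $\chi\circ T^{n'-k+1}$ is $\cG_{k-1}$-measurable, so this is a martingale transform with predictable coefficients. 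Truncating $\chi=\chi_1^k+(\chi-\chi_1^k)$ with $\chi_1^k=\sum_{j=1}^k P^jv\in L^\infty$ and $|\chi-\chi_1^k|_1\to0$, one applies Birkhoff's theorem to the bounded piece (its ergodic mean $\int m\otimes(\chi_1^k\circ T)\,d\mu=\int(Pm)\otimes\chi_1^k\,d\mu$ vanishes since $m\in\ker P$) and controls the tail via an $L^2$ orthogonality estimate for the martingale transform. The boundary term $\frac1n m_{[nt]-1}\otimes\chi\circ T^{[nt]}$ is handled by the same truncation of $\chi$: the bounded part contributes $O(k/\sqrt n)$ after using Corollary~\ref{cor:Doob} to get $\max_{\ell\le nK}|m_\ell|=O_p(\sqrt n)$, and the $L^1$-small remainder is absorbed via Markov and the $L^2$ estimate on the martingale maximum.

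Finally, Lemma~\ref{lem:diff} gives $\sup_{t\le K}|W_n(t)-M_n(t)|\to_p0$, so combining $(M_n,\hBBM_n)\to_w(W,\BBM)$ with $\BBM_n-\hBBM_n\to_p0$ and a Slutsky-type argument produces $(W_n,\BBM_n)\to_w(W,\BBM)$ in $D([0,\infty),\R^d\times\R^{d\times d})$. I expect the main difficulty to lie in the uniform-in-$t$ control of the boundary term $\frac1n m_{[nt]-1}\otimes\chi\circ T^{[nt]}$: because under hypothesis~\eqref{eq:H} we only know $\chi\in L^1$, the naive route via a maximal inequality for $\chi\circ T^{[nt]}$ is unavailable, and the estimate must genuinely exploit the interplay between the $L^2$-controlled martingale partial sums $m_\ell$ and a careful splitting of $\chi$ into an $L^\infty$ piece of size $k|v|_\infty$ and an $L^1$-small remainder.
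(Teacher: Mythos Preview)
Your strategy has a genuine integrability gap. The difference $\BBM_n-\hBBM_n$ pairs $m$ (only in $L^2$) with $\chi$ (only in $L^1$ under hypothesis~\eqref{eq:H}), and such products need not even lie in $L^1$. Concretely: for the ``drift-like'' sum, the $L^2$ orthogonality estimate you invoke requires each summand $(m\circ T^i)\otimes(r_k\circ T^{i+1})$ to be in $L^2$, but with $r_k=\chi-\chi_1^k\in L^1$ and $m\in L^2$ this fails; making $|r_k|_1$ small does not help. Likewise for the boundary term $\frac1n m_{[nt]-1}\otimes(\chi\circ T^{[nt]})$: after splitting off $\chi_1^k\in L^\infty$, you are left with $\frac1n\max_{\ell}|m_\ell|\cdot\max_{\ell}|r_k\circ T^\ell|$, and for an $L^1$ random variable the maximum over $n$ iterates is in general only $o(n)$, not $o(\sqrt n)$, so the product does not vanish. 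No Markov/Cauchy--Schwarz combination bridges an $L^2\times L^1$ pairing here. (There is also a secondary issue: \cite[Section~4]{KM16} is stated for $L^\infty$ observables, so applying it to $m\in L^2$ to get $(M_n,\hBBM_n)\to_w(W,\BBM)$ is not literally a citation --- it would itself need the time-reversal/Kurtz--Protter argument.)

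The paper's proof is engineered precisely to avoid this bad pairing. It keeps $v\in L^\infty$ in the \emph{second} slot of $\BBM_n$ throughout, so that the later comparison $\BBW_n-\BBM_n$ pairs $\chi\in L^1$ with $v\in L^\infty$ (giving an honest Birkhoff average). For $\BBM_n$ itself, the paper time-reverses on $[0,K]$ to produce processes $W_n^-$, $M_n^-$, $\BBM_n^-$ for which $M_n^-$ is a genuine forward $L^2$ martingale and $W_n^-$ (built from $v\in L^\infty$) is the adapted integrand; Kurtz--Protter then applies directly, and a continuous-mapping argument transfers the limit back to $(W_n,\BBM_n)$. The moral is that the asymmetry of $\BBM_n$ --- martingale on the left, bounded observable on the right --- is not incidental but essential under the $L^1$ Gordin criterion.
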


\begin{proof}
It suffices to prove that $(W_n,\BBM_n)\to_w (W,\BBM)$ in $D([0,K],\R^d\times\R^{d\times d})$ for each fixed integer $K\ge1$.
Define for $t\in[0,K]$,
\begin{align} \label{eq:WM-}
W_n^-(t) & =\frac{1}{\sqrt n}\sum_{1\le j\le [nt]}v\circ T^{nK-j}, \qquad 
M_n^-(t)=\frac{1}{\sqrt n}\sum_{1\le j\le [nt]}m\circ T^{nK-j}, \\
\BBM_n^-(t) & =\frac1n\sum_{1\le i<j\le [nt]}(v\circ T^{nK-i})\otimes (m\circ T^{nK-j}).
\nonumber
\end{align}

There are three main steps:
\begin{description}
\item[Step 1] Transfer convergence of $W_n$ in Lemma~\ref{lem:WIP} to 
convergence of $W_n^-$ and $M_n^-$, showing that
$(W_n^-,M_n^-)\to_w(W,W)$ in $D([0,K],\R^d\times\R^d)$.
\item[Step 2] Apply~\cite{JakubowskiMeminPages89,KurtzProtter91} to show that
$(W_n^-,M_n^-,\BBM_n^-)\to_w (W,W,\BBM)$
in $D([0,K],\R^d\times\R^d\times\R^{d\times d})$.
\item[Step 3] Transfer convergence of $(W_n^-,M_n^-,\BBM_n^-)$ in Step~2 back to
convergence of $(W_n,\BBM_n)$, yielding the desired result.
\end{description}

Let $\tD$ denote c\`agl\`ad functions.
Following~\cite{KM16}, we define
\[
g:D([0,K],\R^d)\to \tD([0,K],\R^d), \qquad
g(r)(t)=r(K)-r(K-t).
\]
Then
\begin{align*}
W_n(t) & 
=\frac{1}{\sqrt n}\sum_{j=nK-[nt]+1}^{nK}v\circ T^{nK-j} 
  = 
\frac{1}{\sqrt n}\sum_{j=[n(K-t)]+1}^{nK}v\circ T^{nK-j}-F_n^1(t)
\\ &  =W_n^-(K)-W_n^-(K-t)-F_n^1(t)
  =g(W_n^-)(t)-F_n^1(t),
\end{align*}
where $F_n^1(t)$ is either $0$ or $n^{-1/2}v\circ T^{nK-[n(K-t)]-1}$.
In particular,
\[
  \sup_{t \in[0,K]}|F^1_n(t)| \le n^{-1/2}|v|_\infty\to 0.
\]
By Lemma~\ref{lem:WIP} and the continuous mapping theorem,
\[
W_n^-=g^{-1}(W_n+F_n^1)\to_w g^{-1}(W)
\quad\text{ in $\tD([0,K],\R^d)$.}
\]
Using the fact that
the limiting process has continuous sample paths, it follows (see~\cite[Proposition~4.9]{KM16}) that
$W_n^-\to_w g^{-1}(W)$ in $D([0,K],\R^d)$.
By~\cite[Lemma~4.11]{KM16},
 the processes $g^{-1}(W)$ and $W$ are equal in distribution, so
$W_n^-\to_w W$ in 
$D([0,K],\R^d)$.
By the continuous mapping theorem, 
$(W_n^-,W_n^-)\to_w(W,W)$ in $D([0,K],\R^d\times\R^d)$.
Also, 
\begin{equation} \label{eq:diff-}
\sup_{t\in[0,K]}|W_n^-(t)-M_n^-(t)|\le 2n^{-1/2}\max_{1\le\ell\le nK}
|(v-m)_\ell|
\end{equation}
so $\big|\sup_{t\in[0,K]}|W_n^-(t)-M_n^-(t)|\big|_2\to0$ by Lemma~\ref{lem:diff}.
Hence 
$(W_n^-,M_n^-)\to_w(W,W)$ in $D([0,K],\R^d\times\R^d)$ completing Step~1.

\vspace{1ex}
By Proposition~\ref{prop:mart}, 
$\{m\circ T^{nK-j};\,1\le j\le nK\}$ is a martingale difference sequence with respect to the filtration
$\cG_{n,j}=T^{-(nK-j)}\cF$ for each $n\ge1$.
Moreover, $W_n^-$ is adapted (i.e.\ $v\circ T^{nK-j}$ is $\cG_{n,j}$-measurable for all $j,n$).
Also
$\int_{\Lambda}|M_n^-(t)|^2\,d\mu=n^{-1}[nt]\int_{\Lambda}|m|^2\,d\mu
\le K|m|_2^2$, so
condition~C2.2(i) in~\cite[Theorem~2.2]{KurtzProtter91} is satisfied.
Applying~\cite[Theorem~2.2]{KurtzProtter91} (or alternatively~\cite{JakubowskiMeminPages89}) we deduce that
$(W_n^-,M_n^-,\BBM_n^-)\to_w (W,W,\BBM)$
in $D([0,K],\R^d\times\R^d\times\R^{d\times d})$ completing Step~2.

\vspace{1ex}
Adapting~\cite{KM16}, we define
$h:D([0,K],\R^d\times\R^d\times\R^{d\times d})\to
\tD([0,K],\R^d\times\R^{d\times d})$,
\[
h(r,u,v)(t)=\big(r(K)-r(K-t)\,,\,\{v(K)-v(K-t)-r(K-t)\otimes (u(K)-u(K-t))\}^*\big),
\]
where ${}^*$ denotes matrix transpose.

We claim that
\[
\SMALL (W_n,\BBM_n)=h(W_n^-,M_n^-,\BBM_n^-)-F_n\quad\text{where}\quad
\sup_{t\in[0,K]}|F_n(t)|\to_p 0.
\]
Suppose that the claim is true.
By the continuous mapping theorem and~\cite[Proposition~4.9]{KM16},
$(W_n,\BBM_n)\to_w h(W,W,\BBM)$ in $D([0,K],\R^d\times\R^{d\times d})$.
By~\cite[Lemma~4.11]{KM16},
 the processes $h(W,W,\BBM)$ and $(W,\BBM)$ are equal in distribution so
$(W_n,\BBM_n)\to_w (W,\BBM)$ in $D([0,K],\R^d\times\R^{d\times d})$.

It remains to prove the claim.  Write $h=(h^1,h^2)$ where
$h^1:D([0,K],\R^d\times\R^d\times\R^{d\times d})\to
\tD([0,K],\R^d)$ and
$h^2:D([0,K],\R^d\times\R^d\times\R^{d\times d})\to
\tD([0,K],\R^{d\times d})$.

By Step~1,
\[
W_n(t) 
=h^1(W_n^-,M_n^-,\BBM_n^-)(t)-F_n^1(t)
\quad\text{where}\quad
  \sup_{t \in[0,K]}|F^1_n(t)| \le n^{-1/2}|v|_\infty\to 0.
\]
Also,
\begin{align*}
\BBM_n(t) & =
\frac1n\sum_{nK-[nt]< j<i\le nK}(m\circ T^{nK-i})\otimes (v\circ T^{nK-j}) \\
& =
\frac1n\sum_{nK-[nt]< i<j\le nK}\{(v\circ T^{nK-i})\otimes (m\circ T^{nK-j})\}^* 
\\ & =\frac1n\sum_{[n(K-t)]< i<j\le nK}\{(v\circ T^{nK-i})\otimes (m\circ T^{nK-j})\}^* -F^2_n(t)^*
 \\ & = \{\BBM_n^-(K)-\BBM_n^-(K-t)-W_n^-(K-t)\otimes(M_n^-(K)-M_n^-(K-t))\}^*-F^2_n(t)^*
 \\ & = h^2(W_n^-,M_n^-,\BBM_n^-)(t)-F^2_n(t)^*,
\end{align*}
where
$F^2_n(t)$ is either $0$ or
$n^{-1}\sum_{[n(K-t)] + 1 < j \le nK} (v\circ T^{nK-[n(K-t)]-1}) \otimes (m\circ T^{nK-j})$.
In particular,
  $|F_n^2(t)| \le n^{-1} |v|_\infty \, \max_{1\le\ell\le nK} |m_\ell|$, so
by Corollary~\ref{cor:Doob},
\[
  \big|\sup_{t\in[0,K]}|F_n^2(t)|\big|_2 \ll n^{-1/2} |v|_\infty \, |m|_2\to0.
\]
This completes the proof of the claim.
\end{proof}

\begin{pfof}{Theorem~\ref{thm:IWIP}(c)}
First we consider the case $\nu=\mu$.
It follows from the definition of $\chi$ that
\[
\int_\Lambda \chi\otimes v\,d\mu=
\sum_{j=1}^\infty \int_\Lambda (P^jv)\otimes v\,d\mu
=\sum_{j=1}^\infty \int_\Lambda v\otimes (v\circ T^j)\,d\mu.
\]
By Lemma~\ref{lem:BBM}, it suffices to show for all $K>0$ that
\[
\Big|\sup_{t\in[0,K]}
\Big(\BBW_n(t)-\BBM_n(t)-t\int_\Lambda \chi\otimes v\,d\mu 
\Big)\Big|\to_p0 \quad\text{as $n \to\infty$.}
\]

Now,
\begin{align*}
\sum_{j=1}^n\sum_{i=0}^{j-1}((v-m)\circ T^i)\otimes(v\circ T^j)
& =\sum_{j=1}^n(\chi\circ T^j-\chi)\otimes(v\circ T^j)
\\ & =\sum_{j=1}^n (\chi\otimes v)\circ T^j
-\chi\otimes\sum_{j=1}^n v\circ T^j.
\end{align*}
Since $v\in L^\infty$, $\chi\in L^1$, and $\int_\Lambda v\,d\mu=0$, it follows from the pointwise ergodic theorem that 
\[
\BBW_n(1)-\BBM_n(1)  =n^{-1}\sum_{j=1}^n\sum_{i=0}^{j-1}((v-m)\circ T^i)\otimes(v\circ T^j)\to \int_\Lambda \chi\otimes v\,d\mu \quad\text{a.e.}
\]
as $n\to\infty$.
Hence for any $K>0$, 
\[
\Big|\sup_{t\in[0,K]}
\Big(\BBW_n(t)-\BBM_n(t)-t\int_\Lambda \chi\otimes v\,d\mu 
\Big)\Big|\to0 \quad\text{a.e.}
\]
The iterated WIP on $(\Lambda,\mu)$ follows.

Now we consider the case where $\nu$ is a general probability measure absolutely continuous with respect to $\mu$.
Since $\mu$ is ergodic, it suffices by~\cite[Theorem~1]{Zweimuller07} to show that
\begin{align} \label{eq:bfW}
\lim_{n\to\infty} \mu\Big(\sup_{t\in[0,K]}|{\bfW}_n(t)\circ T-{\bfW}_n(t)|>\eps\Big)=0\end{align}
for all $\eps>0$, where ${\bfW}_n=(W_n,\BBW_n)$.

Now, 
$W_n(t)\circ T-W_n(t)=n^{-1/2}(v\circ T^{[nt]}-v)$ so
\[
|W_n(t)\circ T-W_n(t)|\le 2n^{-1/2}\max_{0\le k\le nK}|v\circ T^k|\le 2n^{-1/2}|v|_\infty
\]
for all $t\in[0,K]$.
Similarly,
\[
|\BBW_n(t)\circ T-\BBW_n(t)|\le 
2n^{-1}|v|_\infty \max_{1\le k\le nK}|v_k|
\]
for all $t\in[0,K]$.
By Proposition~\ref{prop:Rio} and~\eqref{eq:H},
$\big|\max_{1\le k\le nK}|v_k|\big|_2\ll \Big(n|v|_\infty \sum_{j\ge0}|P^jv|_1\Big)^{1/2} \ll n^{1/2}$.  Hence
$\big|\sup_{t\in[0,K]}|\bfW_n(t)\circ T-\bfW_n(t)|\big|_2\ll n^{-1/2}$, and~\eqref{eq:bfW} follows.
\end{pfof}

\section{Invertible setting}
\label{sec:inv}

Let $(\Lambda,\cF,\mu)$ be a probability space and 
$T:\Lambda\to\Lambda$ be an invertible ergodic measure-preserving map.
We suppose that there is a sub-sigma-algebra $\cF_0\subset \cF$ such that
$T^{-1}\cF_0\subset \cF_0$.
Then $\cF_j=T^j\cF_0$ defines a nondecreasing filtration $\{\cF_j:j\in\Z\}$.

Fix $d\ge1$ and let $v\in L^\infty(\Lambda,\R^d)$ with $\int_\Lambda v\,d\mu=0$.
The $L^1$ Gordin criterion now takes the form
\begin{equation} \label{eq:Hinv}
\sum_{n=1}^\infty |\E_0(v\circ T^{-n})|_1+
\sum_{n=0}^\infty |\E_0(v\circ T^n)-v\circ T^n|_1<\infty,
\end{equation}
where $\E_j=\E(\,\cdot\,|\cF_j)$.

Under hypotheses similar to~\eqref{eq:Hinv}, the CLT and WIP have been proved by various authors, including~\cite{DedeckerRio00,PeligradUtev05,TyranKaminska05}.
In Subsection~\ref{sec:WIPinv}, we recover the WIP under hypothesis~\eqref{eq:Hinv} using techniques similar to those in Section~\ref{sec:Non} combined with ideas from~\cite{DedeckerRio00}.

The iterated WIP holds under the $L^2$ Gordin criterion
$\sum_{n=1}^\infty |\E_0(v\circ T^{-n})|_2+
\sum_{n=0}^\infty |\E_0(v\circ T^n)-v\circ T^n|_2<\infty$ 
by~\cite[Section~4]{KM16} (see~\cite[Proposition~2.5]{DMNapp}).
An interesting open question is to prove the iterated WIP under the $L^1$ criterion~\eqref{eq:Hinv}, but this seems currently out of reach.
In Subsection~\ref{sec:IWIPinv}, we prove the iterated WIP under a hybrid $L^1$--$L^2$ Gordin criterion
\begin{equation} \label{eq:Hinv2}
\sum_{n=1}^\infty |\E_0(v\circ T^{-n})|_1+
\sum_{n=0}^\infty |\E_0(v\circ T^n)-v\circ T^n|_2<\infty.
\end{equation}
The same argument works if the roles of 
$|\;|_1$ and $|\;|_2$ are reversed in~\eqref{eq:Hinv2}.

We note that the existence of a suitable sub-sigma-algebra $\cF_0$ is very natural in the dynamical setting.   Indeed it is often the case that $\Lambda$ is covered by a collection $\cW^s$ of disjoint measurable sets, called ``stable 
leaves'', such that $TW^s_x\subset W^s_{Tx}$ for all $x\in\Lambda$, where $W^s_x$ is the stable leaf containing $x$.
In this situation, let $\cF_0$ denote the sigma-algebra generated by $\cW^s$.  Then $T^{-1}\cF_0\subset\cF_0$.
The following result gives sufficient conditions for hypotheses~\eqref{eq:Hinv} and~\eqref{eq:Hinv2} to hold.

\begin{prop} \label{prop:Hinv}
Let $p\ge1$ and let $v\in L^\infty(\Lambda,\R)$ with $\int_\Lambda v\,d\mu=0$.  
\begin{itemize}
\item[(a)]
Suppose that there exists $C>0$, $\eps>0$ such that
\[
\Big|\int_\Lambda v\,w\circ T^n\,d\mu\Big|\le C|w|_\infty \,n^{-(p+\eps)}
\]
for all $\cF_0$-measurable $w\in L^\infty(\Lambda,\R)$, $n\ge1$.
Then
$\sum_{n=1}^\infty |\E_0(v\circ T^{-n})|_p<\infty$.
\item[(b)]
Suppose that there exists $C>0$, $\eps>0$ such that
\[
\int_\Lambda \diam(v(T^n W^s))\,d\mu\le C|w|_\infty \,n^{-(p+\eps)}
\]
for all $n\ge1$.  Then
$\sum_{n=0}^\infty |\E_0(v\circ T^n)-v\circ T^n|_p<\infty$.
\end{itemize}
\end{prop}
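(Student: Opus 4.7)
The plan is to handle both parts by the same scheme: derive an $L^1$-decay estimate directly from the hypothesis, and then interpolate against the trivial $L^\infty$ bound to obtain $L^p$-decay that is summable in $n$.

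For part (a), I would start by using measure-preservation to write
\[
\int_\Lambda v\,w\circ T^n\,d\mu = \int_\Lambda (v\circ T^{-n})\,w\,d\mu,
\]
and then, for $\cF_0$-measurable $w$, further rewrite the right-hand side as $\int_\Lambda \E_0(v\circ T^{-n})\,w\,d\mu$. Since $\E_0(v\circ T^{-n})$ is itself $\cF_0$-measurable, taking the supremum over $\cF_0$-measurable $w$ with $|w|_\infty\le 1$ realises its $L^1$ norm, yielding $|\E_0(v\circ T^{-n})|_1\le Cn^{-(p+\eps)}$. Combining this with the trivial bound $|\E_0(v\circ T^{-n})|_\infty\le |v|_\infty$ and the interpolation $|f|_p\le |f|_\infty^{1-1/p}|f|_1^{1/p}$ gives $|\E_0(v\circ T^{-n})|_p \ll n^{-(1+\eps/p)}$, which is summable.

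For part (b), I would first record that, since $\cW^s$ is the partition generating $\cF_0$, the operator $\E_0$ acts by averaging along each stable leaf, so that for any bounded $f$ and $\mu$-a.e.\ $x$,
\[
|\E_0 f - f|(x) \le \diam\bigl(f(W^s_x)\bigr).
\]
Applying this with $f=v\circ T^n$, and using $T^n W^s_x\subset W^s_{T^n x}$ so that $(v\circ T^n)(W^s_x) = v(T^n W^s_x)$, integration gives $|\E_0(v\circ T^n)-v\circ T^n|_1\le Cn^{-(p+\eps)}$. The same interpolation template, this time against $|\E_0(v\circ T^n)-v\circ T^n|_\infty\le 2|v|_\infty$, then yields $|\E_0(v\circ T^n)-v\circ T^n|_p \ll n^{-(1+\eps/p)}$, again summable in $n$.

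The only slightly delicate point is the identification of $\E_0$ with leaf-averaging in (b), which requires $\cW^s$ to be a measurable partition in Rokhlin's sense; this is implicit in the standing assumption that $\cF_0$ is the $\sigma$-algebra generated by $\cW^s$. Granted that measure-theoretic setup, both parts reduce to the standard duality-plus-interpolation argument sketched above, and no further technicalities arise.
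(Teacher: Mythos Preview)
Your proposal is correct and supplies precisely the standard duality-plus-interpolation argument that the paper is alluding to; the paper itself does not write out a proof but merely cites \cite[Theorem~3.1]{DMNapp}, where the same scheme is carried out. One minor remark: the inclusion $T^nW^s_x\subset W^s_{T^nx}$ is not actually needed for the identity $(v\circ T^n)(W^s_x)=v(T^nW^s_x)$, which is just the definition of the image of a set under a composition.
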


\begin{proof}
The arguments are standard.  See for example~\cite[Theorem~3.1]{DMNapp}.
\end{proof}

Throughout the remainder of this section, $L^p$ is shorthand for $L^p((\Lambda,\mu),\R^d)$ unless stated otherwise.

\subsection{WIP in the invertible setting}
\label{sec:WIPinv}

Define $W_n\in D([0,\infty),\R^d)$ as in~\eqref{eq:W}.
Let $\nu$ be any probability measure on $\Lambda$ absolutely continuous with respect to $\mu$.
In this subsection, we prove:

\begin{thm} \label{thm:WIPinv}
Let $v\in L^\infty$ with $\int_\Lambda v\,d\mu=0$,
and suppose that~\eqref{eq:Hinv} holds.  Then conclusions~(a) and~(b) of
Theorem~\ref{thm:IWIP} hold, and 
$W_n\to_w W$ in $D([0,\infty),\R^d)$ as $n\to\infty$ on $(\Lambda,\nu)$, where $W$ is a $d$-dimensional Brownian motion with covariance $\Sigma$.
\end{thm}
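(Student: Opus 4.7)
The strategy is to adapt the argument of Section~\ref{sec:Non} via a martingale--coboundary decomposition relative to the filtration $\{\cF_j\}$. The two halves of~\eqref{eq:Hinv} play complementary roles: the first sum controls how fast conditional expectations of the past decay and yields a Gordin-type coboundary function $\psi$, while the second ensures that $v$ equals, up to a summable $L^1$-error, its $\cF_1$-adapted projection. Combining these as in Dedecker--Rio~\cite{DedeckerRio00}, one obtains a decomposition
\[
v = m + \psi\circ T - \psi,
\]
where $\psi\in L^1$ and $m$ is $\cF_1$-measurable with $\E_0 m = 0$, so that $\{m\circ T^j\}$ is a stationary ergodic martingale difference sequence for the filtration $\{\cF_{j+1}\}$.

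The key technical step, and the main obstacle, is to show that $m\in L^2$---the invertible analog of Lemma~\ref{lem:L2}. I would proceed by truncation: define $\psi^{(k)}$ and $m^{(k)}$ using only the first $k$ terms of the series defining $\psi$ and $m$, so that $m^{(k)}\in L^\infty$ and $\E_0 m^{(k)}=0$, and estimate $|m^{(k)}-m^{(\ell)}|_2$ by the same telescoping/Cauchy manipulation as in Lemma~\ref{lem:L2}. In the invertible setting this requires combining both halves of~\eqref{eq:Hinv} and exploiting the time-reversal technique from~\cite{KM16}, rather than the single-sided identities $PU=I$ and $UP=\E(\cdot|T^{-1}\cF)$ available in Section~\ref{sec:Non}. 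The tail bound from~\eqref{eq:Hinv} shows that $\{m^{(k)}\}$ is Cauchy in $L^2$; its limit agrees with the $L^1$-limit $m$ by uniqueness.

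With $m\in L^2$ in hand, the WIP on $(\Lambda,\mu)$ follows much as in Lemma~\ref{lem:WIP}: the martingale FCLT~\cite[Theorem~A.1]{KKM18} applied to $\{m\circ T^j\}$ gives $M_n\to_w W$ in $D([0,\infty),\R^d)$ where $W$ is a $d$-dimensional Brownian motion of covariance $\Sigma=\int_\Lambda m\otimes m\,d\mu$, and the invertible analog of Lemma~\ref{lem:diff}---based on Doob's inequality for the martingale part and Proposition~\ref{prop:Rio} for the remainder---shows that the coboundary contribution $n^{-1/2}(\psi\circ T^{[nt]}-\psi)$ is negligible uniformly on compact time intervals. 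Parts~(a) and~(b) then follow as in Theorem~\ref{thm:IWIP}: the comparison $n^{-1}|v_n|_2^2\to|m|_2^2$ yields existence of $\Sigma$ together with the identification $\Sigma=\int_\Lambda m\otimes m\,d\mu$, and the degeneracy condition $c^T\Sigma c=0$ is equivalent to $c\cdot m=0$, which via the decomposition translates into $c\cdot v=h\circ T-h$ with $h=c\cdot\psi\in L^1$, and conversely.

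Finally, to upgrade from $\mu$ to an arbitrary $\nu\ll\mu$, I would invoke~\cite[Theorem~1]{Zweimuller07} exactly as in the proof of Theorem~\ref{thm:IWIP}(c): since $W_n(t)\circ T-W_n(t)=n^{-1/2}(v\circ T^{[nt]}-v)$ and $v\in L^\infty$, we have $\sup_{t\in[0,K]}|W_n(t)\circ T-W_n(t)|\le 2n^{-1/2}|v|_\infty\to0$, which verifies the required shift-asymptotic-invariance hypothesis and transfers weak convergence from $\mu$ to $\nu$.
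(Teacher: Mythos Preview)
Your overall skeleton matches the paper's strategy: a martingale--coboundary decomposition $v=m+\chi\circ T-\chi$, an $L^2$ Cauchy argument on truncations to get $m\in L^2$, a martingale FCLT for the $m$-part, negligibility of the coboundary, and Zweim\"uller to pass from $\mu$ to $\nu$. Two points, however, are not correct as written.

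First, the martingale direction. With $m$ $\cF_1$-measurable and $\E_0 m=0$ (equivalently the paper's convention: $\cF_0$-measurable, $\E_{-1}m=0$), the sequence $\{m\circ T^j\}_{j\ge0}$ is \emph{not} a martingale difference sequence for the increasing filtration $\{\cF_{j+1}\}$: for $j\ge1$ one has $m\circ T^j$ already $\cF_{1-j}\subset\cF_j$-measurable, hence $\E(m\circ T^j\mid\cF_j)=m\circ T^j$, not $0$. What is true is that $\{m\circ T^{-j}\}_{j\ge1}$ is a forward martingale difference sequence for $\{\cF_j\}$. The paper therefore applies \cite[Theorem~2.1]{Whitt07} to $\tilde M_n^-(t)=n^{-1/2}\sum_{j\le[nt]}m\circ T^{-j}$ and then uses the time-reversal map $g$ from~\cite{KM16} to transfer convergence to $W_n$. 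Note that this time-reversal is used for the WIP itself, not for proving $m\in L^2$; Lemma~\ref{lem:L2inv} is a direct $L^2$ computation.

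Second, and more substantively, the ``invertible analog of Lemma~\ref{lem:diff}'' does not follow from Proposition~\ref{prop:Rio}. After truncating to level $k$ (see~\eqref{eq:martinv}), the remainder splits into a past-looking piece $a_{-k}=\E_0(v\circ T^{-k})$ and a future-looking piece $a_{k+1}=\E_0(v\circ T^{k+1})-v\circ T^{k+1}$. The past piece is $\cF_0$-measurable and does admit a Rio-type maximal bound (Proposition~\ref{prop:Rioinv}, Corollary~\ref{cor:Rioinv}), analogous to your plan. The future piece, however, satisfies $\E_0 a_{k+1}=0$ and Rio's inequality gives nothing useful; controlling $\max_{\ell\le n}|(a_{k+1})_\ell|$ in terms of the second sum in~\eqref{eq:Hinv} requires the Dedecker--Rio maximal inequality (Lemma~\ref{lem:DR}, Proposition~\ref{prop:DR}, Corollary~\ref{cor:DR}). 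This is the essential new ingredient in the invertible setting, and your proposal does not identify it.
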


For $-\infty<\ell\le k<\infty$, define
\[
\chi_\ell^k=\sum_{j=\ell}^k a_j, \qquad
a_j=\begin{cases} 
\E_0(v\circ T^j) & j\le -1  \\
\E_0(v\circ T^j)-v\circ T^j & j\ge0 
\end{cases}.
\]
Also define $\chi=\sum_{j=-\infty}^\infty a_j$.
It follows from our assumptions that $\chi^k_\ell\in L^\infty$ for all $\ell\le k$ and $\chi\in L^1$.  Moreover, $\chi^k_{-k}\to\chi$ in $L^1$ as $k\to\infty$.

\begin{prop} \label{prop:E1}
\begin{itemize}
\item[(a)]
$\E_{-1}(\chi_{-k}^{-\ell})=\chi_{-k-1}^{-\ell-1}\circ T$
for all $k\ge\ell>0$.
\item[(b)]
$\E\big(\chi_{\ell+1}^{k+1}(\chi_\ell^k\circ T-\chi_{\ell+1}^{k+1})\big)=0$
for all $k\ge\ell\ge0$.
\end{itemize}
\end{prop}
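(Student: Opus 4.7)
Both parts rest on a single change-of-filtration identity for conditional expectations under the measure-preserving map $T$: since $\cF_{-1}=T^{-1}\cF_0$, one has
\[
\E_{-1}(f\circ T)=\E_0(f)\circ T \quad\text{equivalently}\quad \E_0(f)\circ T=\E_{-1}(f\circ T),
\]
for any integrable $f$. Apply this with $f=v\circ T^{j-1}$ to obtain $\E_{-1}(v\circ T^j)=\E_0(v\circ T^{j-1})\circ T$, and with $f=v\circ T^j$ (iterated once) to obtain $\E_0(v\circ T^j)\circ T=\E_{-1}(v\circ T^{j+1})$. These are the only analytical inputs.

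For part (a), the summand $a_j=\E_0(v\circ T^j)$ (for $-k\le j\le-\ell\le-1$) is already $\cF_0$-measurable, so by the tower property and the first identity above,
\[
\E_{-1}(a_j)=\E_{-1}\bigl(\E_0(v\circ T^j)\bigr)=\E_{-1}(v\circ T^j)=\E_0(v\circ T^{j-1})\circ T=a_{j-1}\circ T,
\]
since $j-1\le-2$ keeps us in the ``$j\le-1$'' branch of the definition of $a_j$. Summing over $j$ from $-k$ to $-\ell$ and reindexing $j\mapsto j-1$ gives the identity $\E_{-1}(\chi_{-k}^{-\ell})=\chi_{-k-1}^{-\ell-1}\circ T$.

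For part (b), I would first rewrite $\chi_\ell^k\circ T-\chi_{\ell+1}^{k+1}$ in a convenient form. Using the second identity on each $a_j\circ T$ for $j\ge 0$ yields $a_j\circ T=\E_{-1}(v\circ T^{j+1})-v\circ T^{j+1}$. Reindexing the sum in $\chi_\ell^k\circ T$ and subtracting $\chi_{\ell+1}^{k+1}$, the $v\circ T^j$ terms cancel and one is left with
\[
\chi_\ell^k\circ T-\chi_{\ell+1}^{k+1}=-\sum_{j=\ell+1}^{k+1}d_j,\qquad d_j=\E_0(v\circ T^j)-\E_{-1}(v\circ T^j).
\]
The key observation is now that each $d_j$ is $\cF_0$-measurable. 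For each summand $a_i=\E_0(v\circ T^i)-v\circ T^i$ of $\chi_{\ell+1}^{k+1}$ (where $i\ge 1$), pulling $d_j$ inside $\E_0$ gives
\[
\E(v\circ T^i\cdot d_j)=\E\bigl(\E_0(v\circ T^i\cdot d_j)\bigr)=\E\bigl(d_j\,\E_0(v\circ T^i)\bigr)=\E(\E_0(v\circ T^i)\cdot d_j),
\]
so $\E(a_i\,d_j)=0$ for every pair $(i,j)$. Summing over $i$ and $j$ gives part (b).

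\textbf{Main obstacle.} The only thing to be careful about is bookkeeping: making sure the shifted indices land in the correct branch of the piecewise definition of $a_j$ (that is why part (a) requires $\ell>0$ and part (b) requires $\ell\ge 0$), and getting the filtration indices right when moving $T$ past $\E_0$. Once the two displayed intertwining identities are in hand, the rest is telescoping and an application of the tower property.
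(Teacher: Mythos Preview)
Your proof is correct and follows essentially the same route as the paper. For part~(a) the arguments coincide almost verbatim. For part~(b), your expression $-\sum d_j$ with $d_j=\E_0(v\circ T^j)-\E_{-1}(v\circ T^j)$ is the same object as the paper's $\sum_j\{\E_0(v\circ T^j)\circ T-\E_0(v\circ T^{j+1})\}$ (via the intertwining identity), and your termwise calculation $\E(a_i d_j)=0$ is just an unrolled version of the paper's one-line argument: the difference is $\cF_0$-measurable and $\E_0\chi_{\ell+1}^{k+1}=0$, so pull $\E_0$ through.
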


\begin{proof}
(a) Since $\E_{-1}\E_0=\E_{-1}$ and $\E_{-1}(g\circ T)=(\E_0\, g)\circ T$,
\[
\E_{-1}(\chi_{-k}^{-\ell})
 =\sum_{j=-k}^{-\ell}\E_{-1}(v\circ T^j)
 =\sum_{j=-k}^{-\ell}(\E_0(v\circ T^{j-1}))\circ T=\chi_{-k-1}^{-\ell-1}\circ T.
\]

\noindent(b) Note that
\[
\chi_\ell^k\circ T- \chi_{\ell+1}^{k+1}
 =
\sum_{j=\ell}^k\{ \E_0(v\circ T^j)\circ T- \E_0(v\circ T^{j+1})
\},
\]
so $\chi_\ell^k\circ T-\chi_{\ell+1}^{k+1}$ is $\cF_0$-measurable.
Also $\E_0\chi_{\ell+1}^{k+1}=0$.  Hence
\begin{align*}
\E\big(\chi_{\ell+1}^{k+1}(\chi_\ell^k\circ T-\chi_{\ell+1}^{k+1})\big)
& =\E\E_0\big(\chi_{\ell+1}^{k+1}(\chi_\ell^k\circ T-\chi_{\ell+1}^{k+1})\big)
\\ & =\E\big((\chi_\ell^k\circ T-\chi_{\ell+1}^{k+1})\E_0\chi_{\ell+1}^{k+1}\big)=0
\end{align*}
as required.
\end{proof}

Write
\begin{equation} \label{eq:martinv}
v=m^{(k)}+\chi^k_{-k}\circ T-\chi^k_{-k}+a_{-k}-a_{k+1},\,\;k\ge1 
\quad\text{and}\quad
v=m+\chi\circ T-\chi.
\end{equation}
It is immediate from the definitions that 
$m^{(k)}\in L^\infty$ for all $k$, that $m\in L^1$ and that $m^{(k)}\to m$ in $L^1$.  
Moreover, we have the following result corresponding to Lemma~\ref{lem:L2}:

\begin{lemma} \label{lem:L2inv}
$m\in L^2$ and $m^{(k)}\to m$ in $L^2$ as $k\to\infty$.
\end{lemma}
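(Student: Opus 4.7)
The plan is to mimic the proof of Lemma~\ref{lem:L2} by showing that $\{m^{(k)}\}$ is Cauchy in $L^2$; since $m^{(k)}\to m$ in $L^1$ is already known, this will give $m\in L^2$ together with $L^2$ convergence. In the noninvertible setting the key input was $m^{(k)}\in\ker P$; here its role is played jointly by Proposition~\ref{prop:E1}(a) (for the negative-index half of $\chi$) and Proposition~\ref{prop:E1}(b) (for the nonnegative-index half), and the argument will have to split $m^{(k)}-m^{(\ell)}$ accordingly.

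Fix $1\le\ell<k$. Subtracting the two instances of~\eqref{eq:martinv} and telescoping the boundary terms $a_{-k},a_{-\ell},a_{\ell+1},a_{k+1}$ into the $\chi$-blocks yields $m^{(k)}-m^{(\ell)}=M_-+M_+$ with
\[
M_-=\chi_{-k+1}^{-\ell}-\chi_{-k}^{-\ell-1}\circ T,\qquad M_+=\chi_{\ell+2}^{k+1}-\chi_{\ell+1}^{k}\circ T.
\]
By Proposition~\ref{prop:E1}(a) with $(k,\ell)$ replaced by $(k-1,\ell)$, $\chi_{-k}^{-\ell-1}\circ T=\E_{-1}(\chi_{-k+1}^{-\ell})$, so $M_-$ is an orthogonal-projection error; the Pythagorean identity together with the difference-of-squares trick gives
\[
|M_-|_2^2=|\chi_{-k+1}^{-\ell}|_2^2-|\chi_{-k}^{-\ell-1}|_2^2=\int(a_{-\ell}-a_{-k})\big(\chi_{-k+1}^{-\ell}+\chi_{-k}^{-\ell-1}\big)\,d\mu.
\]
By Proposition~\ref{prop:E1}(b) with $(k,\ell)$ replaced by $(k,\ell+1)$, $\E\big(\chi_{\ell+2}^{k+1}\,\chi_{\ell+1}^{k}\circ T\big)=|\chi_{\ell+2}^{k+1}|_2^2$; expanding $|M_+|_2^2$ and using measure preservation yields the analogous
\[
|M_+|_2^2=|\chi_{\ell+1}^{k}|_2^2-|\chi_{\ell+2}^{k+1}|_2^2=\int(a_{\ell+1}-a_{k+1})\big(\chi_{\ell+1}^{k}+\chi_{\ell+2}^{k+1}\big)\,d\mu.
\]

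In each case $|a_j|_\infty\le 2|v|_\infty$, while the $L^1$ norm of the relevant $\chi$-sum is bounded by the corresponding tail of~\eqref{eq:Hinv}. Hence
\[
|M_-|_2^2\ll|v|_\infty\sum_{n\ge\ell}|\E_0(v\circ T^{-n})|_1,\qquad |M_+|_2^2\ll|v|_\infty\sum_{n\ge\ell}|\E_0(v\circ T^n)-v\circ T^n|_1,
\]
both of which tend to $0$ as $\ell\to\infty$. The cross term $\E(M_-M_+)$ has no evident cancellation, but Cauchy--Schwarz supplies $|m^{(k)}-m^{(\ell)}|_2^2\le 2(|M_-|_2^2+|M_+|_2^2)$, so $\{m^{(k)}\}$ is Cauchy in $L^2$; the limit agrees with $m$ by uniqueness of $L^1$ limits.

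The delicate point is the setup in the first step: one has to choose the partition $M_-+M_+$ precisely so that parts~(a) and~(b) of Proposition~\ref{prop:E1} apply with their stated index hypotheses and so that each half is controlled by a different piece of~\eqref{eq:Hinv}. Once the splitting is right, each individual estimate is a direct analogue of the telescoping argument in Lemma~\ref{lem:L2}; the loss of the clean kernel property $m^{(k)}\in\ker P$ is compensated for exactly by Proposition~\ref{prop:E1}, with the unavoidable penalty of handling the cross term via the crude Cauchy--Schwarz bound (which is harmless because $|M_-|_2$ and $|M_+|_2$ vanish at the same rate).
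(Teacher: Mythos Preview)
Your proof is correct and is essentially the paper's own argument: the paper makes exactly the same splitting $m^{(k)}-m^{(\ell)}=A+B$ with $A=|M_-|_2$ and $B=|M_+|_2$ in your notation, computes $A^2$ via Proposition~\ref{prop:E1}(a) and $B^2$ via Proposition~\ref{prop:E1}(b) to reach the identical difference-of-squares expressions, and then bounds each by a tail of~\eqref{eq:Hinv}. The only cosmetic differences are that the paper uses the triangle inequality $|m^{(k)}-m^{(\ell)}|_2\le A+B$ rather than your $(a+b)^2\le 2(a^2+b^2)$ bound (what you call Cauchy--Schwarz), and expands $A^2$ directly rather than invoking the Pythagorean identity.
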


\begin{proof}
For $k\ge\ell\ge0$,
\begin{align*}
m^{(k)}-m^{(\ell)} & 
=(\chi_{-k}^k-\chi_{-\ell}^\ell)-(\chi_{-k}^k-\chi_{-\ell}^\ell)\circ T+(a_{k+1}-a_{-k})-(a_{\ell+1}-a_{-\ell})
\\ & =(\chi_{-k}^{-\ell-1}+\chi_{\ell+1}^k)-(\chi_{-k}^{-\ell-1}+\chi_{\ell+1}^k)\circ T+(a_{k+1}-a_{-k})-(a_{\ell+1}-a_{-\ell})
\\ & =(\chi_{-k+1}^{-\ell}+\chi_{\ell+2}^{k+1})-(\chi_{-k}^{-\ell-1}+\chi_{\ell+1}^k)\circ T.
\end{align*}
Hence
$|m^{(k)}-m^{(\ell)}|_2\le A+B$
where
\[
A=|\chi_{-k+1}^{-\ell}-\chi_{-k}^{-\ell-1}\circ T|_2, \qquad
B= |\chi_{\ell+2}^{k+1}-\chi_{\ell+1}^k\circ T|_2.
\]
Now,
\[
A^2 = \E\big((\chi_{-k+1}^{-\ell})^2-2\chi_{-k+1}^{-\ell} \, \chi_{-k}^{-\ell-1}\circ T+(\chi_{-k}^{-\ell-1})^2\big).
\]
By Proposition~\ref{prop:E1}(a),
\begin{align*}
\E(\chi_{-k+1}^{-\ell} \, \chi_{-k}^{-\ell-1}\circ T)
& =\E\E_{-1}(\chi_{-k+1}^{-\ell} \, \chi_{-k}^{-\ell-1}\circ T)
=\E\big(\chi_{-k}^{-\ell-1}\circ T \,\E_{-1}(\chi_{-k+1}^{-\ell})\big)
\\ & =\E\big((\chi_{-k}^{-\ell-1}\circ T)^2\big )
=\E\big((\chi_{-k}^{-\ell-1})^2\big ).
\end{align*}
Hence
\begin{align*}
A^2 & =\E\big((\chi_{-k+1}^{-\ell})^2-(\chi_{-k}^{-\ell-1})^2\big)
 =\E\big(
 (\chi_{-k+1}^{-\ell}-\chi_{-k}^{-\ell-1})
(\chi_{-k+1}^{-\ell}+\chi_{-k}^{-\ell-1})
\big)
 \\ & =\E\big( (a_{-\ell}-a_{-k}) (\chi_{-k+1}^{-\ell}+\chi_{-k}^{-\ell-1})\big)
 \le |a_{-\ell}-a_{-k}|_\infty |\chi_{-k+1}^{-\ell}+\chi_{-k}^{-\ell-1}|_1
\\ & \le 4|v|_\infty\sum_{j=-\infty}^{-\ell}|\E_0(v\circ T^j)|_1.
\end{align*}

Next, 
\[
B^2 =\E\big((\chi_{\ell+2}^{k+1})^2-2\chi_{\ell+2}^{k+1}\,\chi_{\ell+1}^k\circ T
+(\chi_{\ell+1}^k)^2\big).
\]
By Proposition~\ref{prop:E1}(b),
\[
\E(\chi_{\ell+2}^{k+1}\,\chi_{\ell+1}^k\circ T)
=\E\big(\chi_{\ell+2}^{k+1}\,(\chi_{\ell+1}^k\circ T-\chi_{\ell+2}^{k+1})
+(\chi_{\ell+2}^{k+1})^2\big)
=\E\big((\chi_{\ell+2}^{k+1})^2\big).
\]
Hence
\begin{align*}
B^2 & = \E\big((\chi_{\ell+1}^k)^2-(\chi_{\ell+2}^{k+1})^2\big)
=\E\big((\chi_{\ell+1}^k-\chi_{\ell+2}^{k+1})(\chi_{\ell+1}^k+\chi_{\ell+2}^{k+1})\big)
\\ & =\E\big((a_{\ell+1}-a_{k+1})(\chi_{\ell+1}^k+\chi_{\ell+2}^{k+1})\big)
\le |a_{\ell+1}-a_{k+1}|_\infty|\chi_{\ell+1}^k+\chi_{\ell+2}^{k+1}|_1
\\ & \le 8|v|_\infty \sum_{j=\ell+1}^\infty |\E_0(v\circ T^j)-v\circ T^j|_1.
\end{align*}

It follows from hypothesis~\eqref{eq:Hinv} together with these estimates for $A$ and $B$ that 
$m^{(k)}$ is Cauchy in $L^2$.
By uniqueness of limits in $L^1$, the $L^2$ limit of $m^{(k)}$ coincides with $m$.
\end{proof}

Standard calculations (see for example~\cite{Heyde75,Volny93} or~\cite[Proposition~2.2]{DMNapp}) show that $m$ is $\cF_0$-measurable and that $\E_{-1}m=0$.
Hence $\{m\circ T^{-j}:n\in\Z\}$ is a martingale with respect to the filtration $\cF_j$.  The same is true for 
\[
m^{(k)}=\sum_{j=-k+1}^{k+1}\E_0(v\circ T^j)-\sum_{j=-k}^k (\E_0(v\circ T^j))\circ T.
\]

\subsubsection*{Maximal inequality for $a_{-k}$}

\begin{prop} \label{prop:Rioinv}
Let $w\in L^\infty$ and suppose that $w$ is $\cF_0$-measurable.  
Then 
$\big|\max_{1\le\ell\le n}|w_\ell|\big|_2^2 \le 128\, n|w|_\infty\sum_{j=0}^\infty |\E_0(w\circ T^{-j})|_1$.
\end{prop}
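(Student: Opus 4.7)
The plan is to adapt the proof of Proposition~\ref{prop:Rio} to the invertible setting, with the key change being the choice of filtration. Fix $n\ge1$ and set $X(j)=w\circ T^{n-j}$ for $1\le j\le n$, with filtration $\cG_j=T^{-(n-j)}\cF_0$. First I would verify the two basic properties: the filtration is nondecreasing in $j$ because $T^{-1}\cF_0\subset\cF_0$ implies $T^{-r}\cF_0\subset T^{-(r-1)}\cF_0$ for $r\ge1$; and $X(j)$ is $\cG_j$-measurable because $w$ is assumed $\cF_0$-measurable.

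With this setup in place, I would invoke Rio's inequality in the $p=2$ form of \cite[Proposition~7]{MerlevedePeligradUtev06} exactly as in the proof of Proposition~\ref{prop:Rio}, giving
\[
\big|\max_{\ell\le n}|X(1)+\cdots+X(\ell)|\big|_2^2\le 16\sum_{j=1}^n b_{j,n},\qquad
b_{j,n}=\max_{j\le u\le n}\Big|X(j)\sum_{k=j}^u\E(X(k)|\cG_j)\Big|_1.
\]
The main computation — and what I expect to be the one point that demands care — is evaluating $\E(X(k)|\cG_j)$ for $k\ge j$. Using the measure-preserving identity $\E(\phi\circ T^r\,|\,T^{-r}\sigma)=\E(\phi|\sigma)\circ T^r$, applied with $r=n-j$ and $\phi=w\circ T^{-(k-j)}$, I would derive
\[
\E(X(k)|\cG_j)=\E_0\big(w\circ T^{-(k-j)}\big)\circ T^{n-j}.
\]
Here the hypothesis that $\cF_0$ is $T^{-1}$-invariant is essential for $T^{-r}\cF_0$ to be meaningful as a sub-sigma-algebra and for the identity to apply.

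Summing over $k$ from $j$ to $u$, taking $L^1$ norms (using the $T$-invariance of $\mu$), and using the crude bound $|X(j)|_\infty\le|w|_\infty$, this yields
\[
b_{j,n}\le |w|_\infty\sum_{i=0}^\infty|\E_0(w\circ T^{-i})|_1,
\]
so summing over $j$ gives $\sum_{j=1}^n b_{j,n}\le n|w|_\infty\sum_{i=0}^\infty|\E_0(w\circ T^{-i})|_1$. Finally, the elementary observation $\max_{\ell\le n}|w_\ell|\le 2\max_{\ell\le n}|X(1)+\cdots+X(\ell)|$ (identical to that used in Proposition~\ref{prop:Rio}) combined with the above inequalities delivers the stated maximal bound. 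Unlike the noninvertible proof, no martingale decomposition of $w$ itself is required: the $\cF_0$-measurability of $w$ plays that role directly, and this is precisely why the result is stated for $\cF_0$-measurable $w$ rather than for a general $v$.
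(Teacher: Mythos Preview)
Your proof is correct, and in fact slightly more direct than the paper's. Both arguments set up the same adapted sequence $X(j)=w\circ T^{n-j}$ with filtration $\cG_j=T^{-(n-j)}\cF_0=\cF_{j-n}$ and invoke Rio's inequality. The difference lies in how the conditional sums $\sum_{k=j}^u\E(X(k)|\cG_j)$ are handled. The paper introduces an auxiliary martingale--coboundary decomposition $w=m_-+\chi_-\circ T-\chi_-$ with $\chi_-=\sum_{j\ge1}\E_0(w\circ T^{-j})$, uses $\E(m_-\circ T^{n-k}|\cF_{j-n})=0$ for $k>j$ to telescope, and then bounds via $|w|_1+2|\chi_-|_1$, exactly mirroring the noninvertible Proposition~\ref{prop:Rio}. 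You instead compute each conditional expectation explicitly as $\E_0(w\circ T^{-(k-j)})\circ T^{n-j}$ and sum directly. Your route avoids introducing $m_-$ and $\chi_-$ altogether and actually yields the sharper constant $64$ rather than $128$; the paper's detour through the decomposition costs a factor of~$2$ in the estimate $b_{j,n}\le 2|w|_\infty\sum_{j\ge0}|\E_0(w\circ T^{-j})|_1$. Your closing remark is exactly right: the $\cF_0$-measurability of $w$ is what permits the direct computation, playing the role that the martingale decomposition plays in the paper's argument.
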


\begin{proof}
Fix $n\ge1$ and
define the random variables $X(j)=w\circ T^{n-j}$ which are adapted to the filtration $\cF_{j-n}$.
Using Rio's inequality as in the proof of Proposition~\ref{prop:Rio},
$\big|\max_{1\le\ell\le n}|X(1)+\cdots+X(\ell)|\big|_2^2\le 16\sum_{j=1}^n b_{j,n}$
where
\[
b_{j,n} =\max_{1\le j\le u\le n}|X(j)\sum_{k=j}^u\E(X(k)|\cF_{j-n})|_1
\le |w|_\infty\max_{1\le j\le u\le n}\Big|\sum_{k=j}^u \E(w\circ T^{n-k}|\cF_{j-n})\Big|_1.
\]

Define $m_-$, $\chi_-\in L^1$, 
\[
\chi_-=\sum_{j=1}^\infty \E_0(w\circ T^{-j}), \qquad
w = m_-+\chi_-\circ T-\chi_-.
\]
Using that $w$ is $\cF_0$-measurable,
it is easily verified that $m_-$ is $\cF_0$-measurable and 
$\E_{-1}m_-=0$.  Hence 
$\E(m_-\circ T^{n-k}|\cF_{j-n})=0$ for all $k>j$.  
It follows that 
\[
\sum_{k=j}^u \E(w\circ T^{n-k}|\cF_{j-n})=\E\big(m_-\circ T^{n-j}+
\chi_-\circ T^{n+1-j} -\chi_-\circ T^{n-u}|\cF_{j-n}\big).
\]
Now continue as in the proof of Proposition~\ref{prop:Rio}.
\end{proof}

\begin{cor} \label{cor:Rioinv}
$\big|\max_{1\le\ell\le n}|(a_{-k})_\ell|\big|_2^2 
\le 128\, n|v|_\infty\sum_{j=k}^\infty |\E_0(v\circ T^{-j})|_1$.
\end{cor}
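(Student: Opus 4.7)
The plan is to obtain Corollary~\ref{cor:Rioinv} as an immediate specialization of Proposition~\ref{prop:Rioinv} applied with $w=a_{-k}$. The hypotheses of the proposition are easy to verify for this choice: by definition $a_{-k}=\E_0(v\circ T^{-k})$ is $\cF_0$-measurable, and since conditional expectation is a contraction on $L^\infty$ we have $|a_{-k}|_\infty\le |v|_\infty$, so in particular $a_{-k}\in L^\infty$.

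The only real content is to rewrite the sum $\sum_{j=0}^\infty |\E_0(a_{-k}\circ T^{-j})|_1$ that appears on the right-hand side of Proposition~\ref{prop:Rioinv} in the form stated in the corollary. For this I would use the elementary identity $\E(g\mid\cG)\circ S = \E(g\circ S\mid S^{-1}\cG)$ valid for any invertible measure-preserving $S$ and sub-sigma-algebra $\cG$. Applied with $S=T^{-j}$ and $\cG=\cF_0$, this gives $S^{-1}\cF_0 = T^j\cF_0 = \cF_j$, so
\[
a_{-k}\circ T^{-j} = \E_0(v\circ T^{-k})\circ T^{-j} = \E_j(v\circ T^{-k-j}).
\]
For $j\ge 0$ one has $\cF_0\subset\cF_j$, so by the tower property
\[
\E_0(a_{-k}\circ T^{-j}) = \E_0\E_j(v\circ T^{-k-j}) = \E_0(v\circ T^{-k-j}) = a_{-k-j}.
\]
A shift of the summation index then produces $\sum_{j=0}^\infty |\E_0(a_{-k}\circ T^{-j})|_1 = \sum_{j=k}^\infty |\E_0(v\circ T^{-j})|_1$, which is exactly the factor that appears in the corollary.

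Combining this identity with $|a_{-k}|_\infty\le |v|_\infty$ and the bound supplied by Proposition~\ref{prop:Rioinv} yields the stated inequality. There is no genuine obstacle here: all the substantive probabilistic work (Rio's inequality and the martingale decomposition for $\cF_0$-measurable observables) was already carried out in Proposition~\ref{prop:Rioinv}, and the corollary is a routine re-indexing of conditional expectations.
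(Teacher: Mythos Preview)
Your proposal is correct and follows essentially the same route as the paper: apply Proposition~\ref{prop:Rioinv} with $w=a_{-k}$, then reduce $\E_0(a_{-k}\circ T^{-j})$ to $\E_0(v\circ T^{-(j+k)})$ via the standard identity relating conditional expectations and composition with $T^{-j}$ together with the tower property. The only cosmetic difference is that the paper passes through $\E_{-j}$ (using $\E_0(h\circ T^{-j})=(\E_{-j}h)\circ T^{-j}$ twice) whereas you pass through $\E_j$; both compute the same thing.
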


\begin{proof}
Recall that $a_{-k}=\E_0(v\circ T^{-k})$, so $|a_{-k}|_\infty\le |v|_\infty$
and $a_{-k}$ is $\cF_0$-measurable.  
By Proposition~\ref{prop:Rioinv},
\[
\big|\max_{1\le\ell\le n}|(a_{-k})_\ell|\big|_2^2 \le 128\, n|v|_\infty\sum_{j=0}^\infty |\E_0(a_{-k}\circ T^{-j})|_1.
\]
Setting $g=v\circ T^{-k}$,
\begin{align*}
\E_0(a_{-k}\circ T^{-j}) & = \E_0((\E_0g)\circ T^{-j}) 
=(\E_{-j}\E_0g)\circ T^{-j}
\\ & =(\E_{-j}g)\circ T^{-j} =\E_0(g\circ T^{-j}) =\E_0(v\circ T^{-(j+k)}).
\end{align*}
The result follows.
\end{proof}

\subsubsection*{Maximal inequality for $a_k$}

Here we rely heavily on ideas from~\cite{DedeckerRio00}.
In particular, we require the following maximal inequality~\cite[Equation~(3.4)]{DedeckerRio00}:
\begin{lemma}  \label{lem:DR}
Let $S_n=\sum_{j=1}^n X(j)$ be a sum of $L^2$ random variables.
Then
\[
\E({S_n^*}^2)\le 4\E(S_n^{\,2})-4\sum_{j=1}^n\E(X(j)S_{j-1}^{\,^*})
\]
where $S_n^*=\max\{0,S_1,\dots,S_n\}$.  \qed
\end{lemma}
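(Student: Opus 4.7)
The plan is to derive the inequality from a pathwise algebraic identity and then close the argument in expectation with a standard Young bound. Set $S_0 = 0$ and introduce the nondecreasing increments $\Delta_k = S_k^* - S_{k-1}^*$, so that $\Delta_k \ge 0$ and $\sum_{k=1}^n \Delta_k = S_n^*$. The key geometric observation is that whenever $\Delta_k > 0$ the running maximum is achieved at time $k$, i.e.\ $S_k = S_k^*$; in particular $S_k \Delta_k = S_k^* \Delta_k$ pointwise for every $k$.

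The first step is to rewrite $\sum_{j=1}^n X(j) S_{j-1}^*$ by substituting $S_{j-1}^* = \sum_{k=1}^{j-1}\Delta_k$ and swapping the order of summation. This produces
\[
 \sum_{j=1}^n X(j) S_{j-1}^* \;=\; \sum_{k=1}^n \Delta_k (S_n - S_k) \;=\; S_n S_n^* \;-\; \sum_{k=1}^n S_k^*\Delta_k,
\]
where the observation above has been used to replace $S_k$ by $S_k^*$ in the final sum. Applying the telescoping identity $2a(a-b) = (a^2-b^2) + (a-b)^2$ with $a = S_k^*$, $b = S_{k-1}^*$ collapses the remaining sum into $\sum_k S_k^*\Delta_k = \tfrac12 (S_n^*)^2 + \tfrac12\sum_k \Delta_k^2$, yielding the pathwise identity
\[
 \sum_{j=1}^n X(j) S_{j-1}^* \;=\; S_n S_n^* - \tfrac12(S_n^*)^2 - \tfrac12\sum_{k=1}^n\Delta_k^2.
\]

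Dropping the nonnegative $\tfrac12\sum\Delta_k^2$ term and taking expectations yields $\tfrac12\E\bigl((S_n^*)^2\bigr) \le \E(S_nS_n^*) - \sum_j\E(X(j)S_{j-1}^*)$. The final step is to apply the elementary Young inequality $S_n S_n^* \le \tfrac14(S_n^*)^2 + S_n^2$, which after taking expectations and absorbing the $\tfrac14\E((S_n^*)^2)$ term into the left-hand side produces exactly the desired bound. I do not anticipate serious obstacles: $S_n^* \le \sum_j |X(j)|$ lies in $L^2$ under the hypothesis of the lemma, so every expectation in sight is finite and all the rearrangements are rigorous. The only real piece of bookkeeping is the ordering-of-summation step that produces the pathwise identity, which is where the observation $S_k\Delta_k = S_k^*\Delta_k$ does its essential work.
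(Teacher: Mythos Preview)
Your argument is correct. The pathwise identity
\[
\sum_{j=1}^n X(j)S_{j-1}^* \;=\; S_nS_n^* - \tfrac12(S_n^*)^2 - \tfrac12\sum_{k=1}^n \Delta_k^2
\]
is verified exactly as you describe, the key point being that $\Delta_k>0$ forces $S_k=S_k^*$ so that $S_k\Delta_k=S_k^*\Delta_k$ always holds. Dropping the nonnegative remainder, applying $S_nS_n^*\le \tfrac14(S_n^*)^2+S_n^2$, and absorbing the $\tfrac14\E((S_n^*)^2)$ term gives the stated bound with the correct constant~$4$. The integrability check $S_n^*\le\sum_j|X(j)|\in L^2$ is fine.

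As for comparison: the paper does not actually prove this lemma. It is stated with a terminal \texttt{\textbackslash qed} and attributed to Dedecker--Rio~\cite[Equation~(3.4)]{DedeckerRio00}, so there is no in-paper argument to compare against. Your proof is a clean, self-contained derivation; the original Dedecker--Rio argument proceeds along broadly similar lines (telescoping against the running maximum), so your approach is in the same spirit as the cited source even though the paper itself offers nothing beyond the reference.
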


The following elementary estimate is useful:
\begin{prop} \label{prop:hLip}
Define $h:\R^n\to\R$, $h(b)=\max\{0,b_1,b_1+b_2,\dots,\sum_{j=1}^n b_i\}$.
Then $|h(b)-h(b')|\le \sum_{i=1}^n|b_i-b_i'|$.
\qed
\end{prop}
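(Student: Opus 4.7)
The proposition is essentially the statement that the partial-sum maximum is a $1$-Lipschitz function of the increments, measured in $\ell^1$. The plan is a short two-step argument, and I do not expect any genuine obstacle.

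First, I would recall the standard fact that the map $(x_0,x_1,\dots,x_n)\mapsto\max\{x_0,x_1,\dots,x_n\}$ is $1$-Lipschitz with respect to the sup norm: if $x_i\le x_i'+\max_j|x_j-x_j'|$ for every $i$, then taking the maximum on the left yields $\max_i x_i\le \max_i x_i' + \max_j|x_j-x_j'|$, and symmetrically. Applying this to $x_0=0$, $x_k=\sum_{i=1}^k b_i$ and the corresponding quantities for $b'$ gives
\[
|h(b)-h(b')|\le \max_{1\le k\le n}\Bigl|\sum_{i=1}^k b_i-\sum_{i=1}^k b_i'\Bigr|.
\]

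Second, I would bound each partial-sum difference by the triangle inequality:
\[
\Bigl|\sum_{i=1}^k(b_i-b_i')\Bigr|\le \sum_{i=1}^k|b_i-b_i'|\le \sum_{i=1}^n|b_i-b_i'|,
\]
which is independent of $k$ and therefore also bounds the maximum over $k$. Combining the two displays yields the claimed Lipschitz estimate. The only thing to be mindful of is including the term $0$ inside the maximum when invoking Lipschitzness of $\max$; this is harmless since the $0$ coordinate is the same for $b$ and $b'$, so it contributes nothing to $\max_j|x_j-x_j'|$.
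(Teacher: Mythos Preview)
Your argument is correct. The paper actually omits the proof entirely (the proposition is stated with an immediate \qed), treating it as an elementary exercise; your two-step argument via $1$-Lipschitzness of $\max$ in the sup norm followed by the triangle inequality on partial sums is exactly the standard verification one would supply.
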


\begin{prop} \label{prop:DR}
Let $w\in L^\infty$ with $\E_0w=0$.  Then 
\[
\Big|\max_{1\le\ell\le n}|w_\ell|\big|_2^2
\le 96\,n|w|_\infty\sum_{j=0}^\infty|\E_0(w\circ T^j)-w\circ T^j|_1.
\]
\end{prop}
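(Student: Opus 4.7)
The plan is to parallel the proof of Proposition \ref{prop:Rio}, but with Lemma \ref{lem:DR} replacing Rio's $L^2$ inequality, and with the hypothesis $\E_0 w=0$ used to kill the appropriate error terms. Set $X(j)=w\circ T^{n-j}$ for $1\le j\le n$, so that $S_\ell=X(1)+\cdots+X(\ell)=w_n-w_{n-\ell}$. Exactly as in Proposition \ref{prop:Rio}, $\max_{\ell\le n}|w_\ell|\le 2\max_{\ell\le n}|S_\ell|$, and the two-sided maximum obeys $(\max_\ell|S_\ell|)^2\le (S_n^*)^2+((-S)_n^*)^2$ where $S_n^*=\max\{0,S_1,\ldots,S_n\}$. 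By symmetry (apply the argument to $-w$, which still satisfies $\E_0(-w)=0$), it suffices to bound $\E((S_n^*)^2)$.

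Next I would apply Lemma \ref{lem:DR} to obtain $\E((S_n^*)^2)\le 4\E(S_n^2)+4\sum_{j=1}^n|\E(X(j)S_{j-1}^*)|$ and handle the two contributions separately. For the first, expand $\E(S_n^2)=\E(w_n^2)$ into diagonal and off-diagonal parts. For $j\ge 1$ write $w\circ T^j=\E_0(w\circ T^j)+(w\circ T^j-\E_0(w\circ T^j))$; the first piece contributes nothing because $\E(w\,\E_0(w\circ T^j))=\E(\E_0(w)\,\E_0(w\circ T^j))=0$, and the second is bounded by $|w|_\infty|\E_0(w\circ T^j)-w\circ T^j|_1$. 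The diagonal term $n|w|_2^2$ is handled by $|w|_2^2\le|w|_\infty|w|_1=|w|_\infty|w-\E_0 w|_1$, i.e.\ the $j=0$ term of the sum. Combining gives $\E(S_n^2)\le 2n|w|_\infty\sum_{j=0}^\infty|\E_0(w\circ T^j)-w\circ T^j|_1$.

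For the cross terms, change variables $\omega\mapsto T^{-(n-j)}\omega$ to rewrite
\[
\E(X(j)S_{j-1}^*)=\E\bigl(w\cdot h(w\circ T^{j-1},w\circ T^{j-2},\ldots,w\circ T)\bigr),
\]
where $h$ is the cumulative-max function of Proposition \ref{prop:hLip}. Call the second factor $Y_j$. Using $\E_0 w=0$ and the $\mathcal F_0$-measurability of $\E_0 Y_j$, rewrite this as $\E(w(Y_j-\E_0 Y_j))$ and bound it by $|w|_\infty|Y_j-\E_0 Y_j|_1$. Approximate $Y_j$ by the $\mathcal F_0$-measurable $Y_j'=h(\E_0(w\circ T^{j-1}),\ldots,\E_0(w\circ T))$; then $|Y_j-\E_0 Y_j|_1\le 2|Y_j-Y_j'|_1$, and by Proposition \ref{prop:hLip} applied pointwise,
\[
|Y_j-Y_j'|_1\le \sum_{i=1}^{j-1}|\E_0(w\circ T^i)-w\circ T^i|_1.
\]
Summing over $j\le n$ produces $\sum_j|\E(X(j)S_{j-1}^*)|\le 2n|w|_\infty\sum_{i=0}^\infty|\E_0(w\circ T^i)-w\circ T^i|_1$. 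Plugging both estimates into Lemma \ref{lem:DR} and then into $|\max_\ell|w_\ell||_2^2\le 4(\E(S_n^{*2})+\E((-S)_n^{*2}))$ yields the claim, with a small amount of bookkeeping needed to arrive at the exact constant $96$.

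The main obstacle is the cross-term analysis. Using the ``wrong'' indexing $X(j)=w\circ T^{j-1}$ would, after the change of variables, produce the cumulative-max function of $w\circ T^{-1},w\circ T^{-2},\ldots$, whose deviation from $\mathcal F_0$ is \emph{not} controlled by the hypothesis (in fact it is bounded below by $|w|_1$). It is precisely the ``time-reversing'' choice $X(j)=w\circ T^{n-j}$ that yields positive powers of $T$ in $Y_j$, and the Lipschitz property of $h$ (Proposition \ref{prop:hLip}) is what converts the maximum into a sum over the quantities $|\E_0(w\circ T^i)-w\circ T^i|_1$ appearing in the hypothesis.
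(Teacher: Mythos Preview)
Your proof is correct and follows essentially the same route as the paper. The paper takes $X(j)=w\circ T^{-j}$, subtracts from $S_{j-1}^*$ the $\cF_j$-measurable quantity $Z_{j-1}^*=h\bigl(\E_j(w\circ T^{-1}),\ldots,\E_j(w\circ T^{-(j-1)})\bigr)$ (using $\E_jX(j)=0$), and applies Proposition~\ref{prop:hLip}; your change of variables $\omega\mapsto T^{-(n-j)}\omega$ converts this into an $\cF_0$-based argument, and your $Y_j'$ is precisely $Z_{j-1}^*\circ T^j$. One small point on the constant: as written, the detour through $\E_0Y_j$ costs an extra factor of $2$ and gives $128$ rather than $96$; since $Y_j'$ is already $\cF_0$-measurable and $\E_0w=0$, you can write $\E(wY_j)=\E(w(Y_j-Y_j'))$ directly and recover $96$.
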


\begin{proof}
Define $ X(j)=w\circ T^{-j}$ and $S_n=\sum_{j=1}^n X(j)$.
Then
\[
\E(S_n^2) =\sum_{i,j=0}^{n-1}\E(w\circ T^{-i}\,w\circ T^{-j})
=n\E(w^2)+2\sum_{j=1}^{n-1}(n-j)\E(w\,w\circ T^j).
\]
Also,
\(
\E\big(w\,\E_0(w\circ T^j)\big)=
\E\big(\E_0(w\circ T^j)\,\E_0w\big)=0
\)
and so
\begin{align*}
\E(S_n^2)
& = n\E(w^2)+2\sum_{j=1}^{n-1}(n-j)\E\big(w\,(w\circ T^j-\E_0(w\circ T^j))\big)
\\ & \le 2n|w|_\infty\Big(|w|_1+\sum_{j=1}^\infty|\E_0(w\circ T^j)-w\circ T^j|_1\Big)
\\ & = 2n|w|_\infty\sum_{j=0}^\infty|\E_0(w\circ T^j)-w\circ T^j|_1.
\end{align*}

Next, define 
\[
Y_{i,j}=\E_j(w\circ T^{-i}),
\qquad Z_{p,j}=\sum_{i=1}^p Y_{i,j},
\qquad  Z_{j-1}^*=\max\{0,Z_{1,j},\dots,Z_{j-1,j}\}.
\]
Note that $Y_{i,j}$ is $\cF_j$-measurable for all $i<j$, so 
in particular $Z_{j-1}^*$ is $\cF_j$-measurable.
Hence $\E(X(j)Z_{j-1}^*)=\E(Z_{j-1}^*\E_jX(j))=0$.
It follows that
\[
\sum_{j=1}^n|\E(X(j)S_{j-1}^*)|=
\sum_{j=1}^n|\E(X(j)(S_{j-1}^*-Z_{j-1}^*))|
\le |w|_\infty \sum_{j=1}^n\E|S_{j-1}^*-Z_{j-1}^*|.
\]
By Proposition~\ref{prop:hLip},
\[
|S_{j-1}^*-Z_{j-1}^*|\le \sum_{i=1}^{j-1}|X(i)-Y_{i,j}|=
\sum_{i=1}^{j-1}|w\circ T^{-i}-(\E_{j-i}w)\circ T^{-i}|
\]
and hence
\begin{align*}
\sum_{j=1}^n|\E(X(j)S_{j-1}^*)| & 
\le |w|_\infty \sum_{1\le i<j\le n}|w-\E_{j-i}w|_1
= |w|_\infty \sum_{j=1}^{n-1}(n-j)|w-\E_jw|_1
\\ & \le n|w|_\infty\sum_{j=1}^\infty|\E_jw-w|_1
= n|w|_\infty\sum_{j=1}^\infty|(\E_0(w\circ T^j))\circ T^{-j}-w|_1
\\ & = n|w|_\infty\sum_{j=1}^\infty|\E_0(w\circ T^j)-w\circ T^j|_1.
\end{align*}
Combining this with the estimate for $\E S_n^2$ it follows from
Lemma~\ref{lem:DR} that
\[
\E({S_n^*}^2)\le 12n|w|_\infty
\sum_{j=0}^\infty|\E_0(w\circ T^j)-w\circ T^j|_1.
\]

The transformation $w\mapsto-w$ sends $S_n^*\mapsto 
S_{n,*}=\max\{0,-S_1,\dots,-S_n\}$.
Hence $\E(S_{n,*}^2)\le 12n|w|_\infty
\sum_{j=0}^\infty|\E_0(w\circ T^j)-w\circ T^j|_1$, and so
\[
\max_{1\le\ell\le n}|S_\ell|^2=
\max\{{S_n^*}^2,S_{n,*}^2\}
\le {S_n^*}^2+S_{n,*}^2
\le 24\,n|w|_\infty
\sum_{j=0}^\infty|\E_0(w\circ T^j)-w\circ T^j|_1.
\]

Finally,
$w_\ell=(S_n-S_{n-\ell})\circ T^n$,
so $\big|\max_{1\le\ell\le n}|w_\ell|\big|_2\le 2\big|\max_{1\le\ell\le n}|S_\ell|\big|_2$ and the result follows.
\end{proof}

\begin{cor} \label{cor:DR}
\(
\big|\max_{1\le\ell\le n}|(a_k)_\ell|\big|_2^2 
\le 192\, n|v|_\infty\sum_{j=k}^\infty |\E_0(v\circ T^j)-v\circ T^j|_1.
\)
\end{cor}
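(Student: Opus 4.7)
The plan is to apply Proposition \ref{prop:DR} directly with $w=a_k$, and then to rewrite the resulting sum in terms of the quantities on the right-hand side.

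First, I would verify the hypotheses of Proposition \ref{prop:DR}. Since $a_k=\E_0(v\circ T^k)-v\circ T^k$, the bound $|a_k|_\infty\le 2|v|_\infty$ is immediate, and $\E_0 a_k=\E_0\E_0(v\circ T^k)-\E_0(v\circ T^k)=0$. So Proposition \ref{prop:DR} applies with $w=a_k$ and yields
\[
\big|\max_{\ell\le n}|(a_k)_\ell|\big|_2^2
\le 96\,n|a_k|_\infty\sum_{j=0}^\infty|\E_0(a_k\circ T^j)-a_k\circ T^j|_1
\le 192\,n|v|_\infty\sum_{j=0}^\infty|\E_0(a_k\circ T^j)-a_k\circ T^j|_1.
\]

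Next, I would re-express the summands. The key identity is that for any $g$, $(\E_0 g)\circ T^j=\E_{-j}(g\circ T^j)$ (which follows from $\cF_{-j}=T^{-j}\cF_0$, i.e.\ the relation $\E_{j-1}(\cdot\circ T)=(\E_j\cdot)\circ T$ iterated). Applied with $g=v\circ T^k$ this gives $(\E_0(v\circ T^k))\circ T^j=\E_{-j}(v\circ T^{k+j})$. For $j\ge 0$ we have $\cF_{-j}\subset\cF_0$, so $\E_0\E_{-j}=\E_{-j}$, and therefore
\[
\E_0(a_k\circ T^j)=\E_0\bigl((\E_0(v\circ T^k))\circ T^j\bigr)-\E_0(v\circ T^{k+j})
=(\E_0(v\circ T^k))\circ T^j-\E_0(v\circ T^{k+j}).
\]
Subtracting $a_k\circ T^j=(\E_0(v\circ T^k))\circ T^j-v\circ T^{k+j}$ the leading term cancels and yields
\[
\E_0(a_k\circ T^j)-a_k\circ T^j=v\circ T^{k+j}-\E_0(v\circ T^{k+j})=-a_{k+j}.
\]

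Substituting into the bound above and reindexing $j\mapsto j-k$ gives
\[
\sum_{j=0}^\infty|\E_0(a_k\circ T^j)-a_k\circ T^j|_1
=\sum_{j=0}^\infty|a_{k+j}|_1
=\sum_{j=k}^\infty|\E_0(v\circ T^j)-v\circ T^j|_1,
\]
which yields the stated inequality. The entire argument is essentially a verification exercise; the only subtle point is the bookkeeping that turns $\E_0$ of a shifted conditional expectation back into an object of the same form as the summands in hypothesis \eqref{eq:Hinv2}, and this is handled by the identity $(\E_0 g)\circ T^j=\E_{-j}(g\circ T^j)$ together with the tower property.
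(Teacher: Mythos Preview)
Your proof is correct and essentially identical to the paper's: both verify $|a_k|_\infty\le 2|v|_\infty$ and $\E_0 a_k=0$, apply Proposition~\ref{prop:DR}, and then show $\E_0(a_k\circ T^j)-a_k\circ T^j=-a_{k+j}$ via the same cancellation. The only cosmetic difference is that the paper writes the key identity as $\E_0(h\circ T^j)=(\E_j h)\circ T^j$ (then uses $\E_j\E_0=\E_0$ since $\cF_0\subset\cF_j$), whereas you write it as $(\E_0 g)\circ T^j=\E_{-j}(g\circ T^j)$ (then use $\E_0\E_{-j}=\E_{-j}$ since $\cF_{-j}\subset\cF_0$); these are equivalent formulations of the same shift-conditioning relation.
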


\begin{proof}
Recall that $a_k=\E_0(v\circ T^k)-v\circ T^k$, so $|a_k|_\infty\le 2|v|_\infty$
and $\E_0a_k=0$.
By Proposition~\ref{prop:DR},
\[
\big|\max_{1\le\ell\le n}|(a_k)_\ell|\big|_2^2 \le 192\, n|v|_\infty\sum_{j=0}^\infty |\E_0(a_k\circ T^j)-a_k\circ T^j|_1.
\]
Setting $g=v\circ T^k$, $a_k=\E_0g-g$,
\begin{align*}
\E_0(a_k\circ T^j)-a_k\circ T^j
& =\E_0((\E_0g)\circ T^j)-\E_0(g\circ T^j)-(\E_0g)\circ T^j+g\circ T^j
\\ & =(\E_j\E_0g)\circ T^j-\E_0(g\circ T^j)-(\E_0g)\circ T^j+g\circ T^j
\\ & =-\E_0(g\circ T^j)+g\circ T^j
 =-\E_0(v\circ T^{j+k})+v\circ T^{j+k}.
\end{align*}
The result follows.
\end{proof}

\begin{lemma} \label{lem:diffinv}
$\lim_{n\to\infty}\frac{1}{\sqrt n} \big|\max_{1\le\ell\le n}|(v-m)_\ell|\big|_2=0$.
\end{lemma}

\begin{proof}
By Lemma~\ref{lem:L2inv} and hypothesis~\eqref{eq:Hinv}, for each $\eps>0$, there exists $k\ge1$ such that 
\[
|m-m^{(k)}|_2<\eps, \quad
\sum_{j=k}^\infty |\E_0(v\circ T^{-j})|_1<\eps^2, \quad
\sum_{j=k+1}^\infty |\E_0(v\circ T^j)-v\circ T^j|_1<\eps^2.
\]

Since $\E_{-1}m=\E_{-1}m^{(k)}=0$, it follows from Doob's inequality as in
Corollary~\ref{cor:Doob} that
\begin{equation} \label{eq:diffinv0}
\big|\max_{1\le\ell\le n}|(m-m^{(k)})_\ell|\big|_2<4\sqrt n\,\eps.
\end{equation}

By~\eqref{eq:martinv},
\begin{align} \label{eq:diffinv1}
\nonumber
|(v-m^{(k)})_n| & \le 
2|\chi_{-k}^k|_\infty+|(a_{-k})_n|+|(a_{k+1})_n|
 \\ & \le (6k+2)|v|_\infty+|(a_{-k})_n|+|(a_{k+1})_n|.
\end{align}

Substituting the estimates from Corollaries~\ref{cor:Rioinv} and~\ref{cor:DR} into~\eqref{eq:diffinv1},
\(
\big|\max_{1\le\ell\le n}|(v-m^{(k)})_\ell|\big|_2  \ll
 k+\eps n^{1/2},
\)
and combining this with~\eqref{eq:diffinv0},
\[
\frac{1}{\sqrt n}\big|\max_{1\le\ell\le n}|(v-m)_\ell|\big|_2  \ll kn^{-1/2}+\eps.
\]
Hence $\limsup_{n\to\infty} \frac{1}{\sqrt n}\big|\max_{1\le\ell\le n}|(v-m)_\ell|\big|_2\ll \eps$ and the result follows
since $\eps$ is arbitrary.
\end{proof}

We require the following standard result from probability theory.

\begin{prop} \label{prop:max}
Let $Y_1,Y_2.\ldots$ be identically distributed random variables with finite second moment.
Then $|\max_{1\le\ell\le n}\big|Y_\ell|\big|_2=o(\sqrt n)$
as $n\to\infty$. \qed
\end{prop}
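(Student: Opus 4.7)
The plan is to reduce to the standard tail-integration estimate for the maximum of i.i.d.\ (in fact only identically distributed) random variables with finite second moment. Since $Y_1,\,Y_2\ldots$ are only assumed identically distributed (not independent), we cannot use any independence argument, but the crude union bound on tail probabilities still suffices.

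First I would observe that $|\max_{\ell\le n}Y_\ell|\le M_n$ where $M_n=\max_{\ell\le n}|Y_\ell|$: indeed, when $\max_{\ell\le n}Y_\ell\ge0$ this is trivial, and when $\max_{\ell\le n}Y_\ell<0$ the quantity $|\max_{\ell\le n}Y_\ell|=\min_{\ell\le n}|Y_\ell|\le M_n$. Hence it is enough to show $\E(M_n^2)=o(n)$.

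Next, by the layer-cake formula and the union bound $P(M_n>t)\le \min\{1,nP(|Y_1|>t)\}$ (which uses only the identical distribution of the $Y_\ell$), for any $A>0$,
\[
\E(M_n^2)=\int_0^\infty 2tP(M_n>t)\,dt
\le \int_0^A 2t\,dt+n\int_A^\infty 2tP(|Y_1|>t)\,dt
=A^2+n\,R(A),
\]
where $R(A)=\int_A^\infty 2tP(|Y_1|>t)\,dt$. Since $Y_1\in L^2$, the layer-cake formula gives $\int_0^\infty 2tP(|Y_1|>t)\,dt=\E(Y_1^2)<\infty$, so $R(A)\to 0$ as $A\to\infty$.

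Finally I would choose a cutoff $A_n\to\infty$ with $A_n^2/n\to 0$, for instance $A_n=n^{1/4}$. Then $A_n^2/n=n^{-1/2}\to0$ and $R(A_n)\to 0$, giving $\E(M_n^2)/n\le A_n^2/n+R(A_n)\to 0$, i.e.\ $|M_n|_2=o(\sqrt n)$, which completes the proof. There is no real obstacle here; the only thing to get right is the balance between the cutoff $A_n$ and the $L^2$-integrability tail $R(A_n)$.
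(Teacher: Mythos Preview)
Your argument is correct. The reduction to $M_n=\max_{\ell\le n}|Y_\ell|$, the layer-cake identity, the union bound $P(M_n>t)\le nP(|Y_1|>t)$ (which indeed uses only identical distribution), and the cutoff at $A_n=n^{1/4}$ all work exactly as you describe.

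The paper, however, does not give a proof at all: the proposition is stated as ``a standard result from probability theory'' and closed with a \qed\ immediately after the statement. So there is nothing to compare beyond saying that your proof supplies the elementary details the paper omits. Your approach is in fact the standard one for this result.
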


\begin{pfof}{Theorem~\ref{thm:WIPinv}}
Conclusions~(a) and~(b) hold by the same arguments in the proof of Theorem~\ref{thm:IWIP} (using Lemma~\ref{lem:diffinv} 
in place of Lemma~\ref{lem:diff}).

Fix $K\ge1$ to be an integer
and define $W_n^-(t)$ and $M_n^-(t)$ for $t\in[0,K]$ as in~\eqref{eq:WM-}.  Also, for $t\ge0$ define 
\[
\tM_n^-(t)=\frac{1}{\sqrt n}\sum_{j=1}^{[nt]}m\circ T^{-j}.
\]
Note that $\{m\circ T^{-n};\,n\in\Z\}$ is a martingale difference sequence with respect to the filtration $\cF_n$.
By Proposition~\ref{prop:max},
$\frac{1}{\sqrt n}\big|\max_{1\le j\le n}|m\circ T^{-j}|\big|_2\to0$.
Also, by the ergodic theorem
$\frac1n\sum_{j=1}^n(m\otimes m)\circ T^{-j}\to\int_\Lambda m\otimes m\,d\mu=\Sigma$ almost everywhere.
Hence we have verified the hypotheses of~\cite[Theorem~2.1]{Whitt07},
yielding $\tM_n^-\to_w W$ in $D[0,\infty),\R^d)$.
Since $M_n^-=\tM_n^-\circ T^{nK}$, 
it follows that $M_n^-\to_w W$ in $D[0,K],\R^d)$.
By~\eqref{eq:diff-} and Lemma~\ref{lem:diffinv},
$W_n^-\to_w W$ in $D[0,K],\R^d)$ on $(\Lambda,\mu)$.

Defining $g$ as in Step~1 of the proof of Lemma~\ref{lem:BBM},
we obtain 
\[
W_n(t)=g(W_n^-(t))-F_n^1(t) \quad\text{for $t\in[0,K]$,}
\]
where $\sup_{t\in[0,K]}|F_n^1(t)|\le n^{-1/2}|v|_\infty$.
Applying~\cite[Proposition~4.9 and Lemma~4.11]{KM16}, $W_n\to_w g(W)=_w W$ in $D[0,K],\R^d)$ on $(\Lambda,\mu)$.

Finally, the case where $\nu$ is a general probability measure absolutely continuous with respect to $\mu$ follows from~\cite[Corollary~3]{Zweimuller07}.
\end{pfof}

\subsection{Iterated WIP in the invertible setting}
\label{sec:IWIPinv}

Define 
$W_n\in D([0,\infty),\R^d)$, 
$\BBW_n\in D([0,\infty),\R^{d\times d})$ as in~\eqref{eq:W}.
Let $\nu$ be any probability measure on $\Lambda$ absolutely continuous with respect to $\mu$.
In this subsection, we prove:

\begin{thm} \label{thm:IWIPinv}
Let $v\in L^\infty$ with $\int_\Lambda v\,d\mu=0$,
and suppose that~\eqref{eq:Hinv2} holds.  Assume also
that $T$ is mixing.
Then 
$(W_n,\BBW_n)\to_w (W,\BBW)$ in $D([0,\infty),\R^d\times\R^{d\times d})$ as $n\to\infty$ on $(\Lambda,\nu)$, where 
$W$ is as in Theorem~\ref{thm:WIPinv} and
\[
\BBW(t)=\int_0^t W \otimes dW+t\sum_{j=1}^\infty \int_\Lambda v\otimes(v\circ T^j)\,d\mu.
\]
\end{thm}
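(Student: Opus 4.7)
The plan follows the template of Theorem~\ref{thm:IWIP}(c). Using the martingale-coboundary decomposition $v = m + \chi \circ T - \chi$ from~\eqref{eq:martinv}, I set
\[
\BBM_n(t) = \frac{1}{n}\sum_{0 \le i < j \le [nt]-1} (m \circ T^i) \otimes (v \circ T^j).
\]
The first step is to control $\BBW_n - \BBM_n$. Substituting $v - m = \chi \circ T - \chi$ and telescoping the inner sum in $i$ gives
\[
\BBW_n(t) - \BBM_n(t) = \frac{1}{n}\sum_{j=1}^{[nt]-1} (\chi \otimes v) \circ T^j - \frac{1}{n}\,\chi \otimes (v_{[nt]-1} - v).
\]
The first piece converges uniformly on $[0,K]$ to $t \int_\Lambda \chi \otimes v\,d\mu$ by the pointwise ergodic theorem, valid since $\chi \in L^1$ and $v \in L^\infty$, so $\chi \otimes v \in L^1$. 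For the second, the maximal estimates of Corollaries~\ref{cor:Rioinv} and~\ref{cor:DR}, combined with Doob applied to $m$ and Lemma~\ref{lem:diffinv}, yield $\big|\max_{k \le nK}|v_k|\big|_2 = O(\sqrt n)$; since $\chi$ is finite almost everywhere, $\frac{1}{n}|\chi|\max_{k\le nK}|v_k| \to_p 0$. A short calculation using $T$-invariance and the definition of the $a_j$ identifies $\int_\Lambda \chi \otimes v\,d\mu$ with the drift $\sum_{j=1}^\infty \int_\Lambda v \otimes (v \circ T^j)\,d\mu$.

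The second step is the invertible analog of Lemma~\ref{lem:BBM}: $(W_n, \BBM_n) \to_w (W, \BBM)$ with $\BBM(t) = \int_0^t W \otimes dW$. I would follow the three-step time-reversal scheme of Lemma~\ref{lem:BBM}: define reversed processes $W_n^-, M_n^-, \BBM_n^-$ analogous to those in~\eqref{eq:WM-}; upgrade $W_n \to_w W$, supplied now by Theorem~\ref{thm:WIPinv}, to $(W_n^-, M_n^-) \to_w (W,W)$ via the map $g$ and the continuous mapping theorem; apply Kurtz-Protter to obtain $(W_n^-, M_n^-, \BBM_n^-) \to_w (W,W,\BBM)$; and transfer back through the map $h$ from the proof of Lemma~\ref{lem:BBM}. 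Finally, the passage from $\mu$ to arbitrary absolutely continuous $\nu$ would go through Zweimüller's result exactly as at the end of the proof of Theorem~\ref{thm:WIPinv}, which is where the mixing assumption on $T$ enters.

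The main obstacle is setting up the Kurtz-Protter step. In the noninvertible setting the filtration $\cG_{n,j} = T^{-(nK-j)}\cF$ automatically made $v \circ T^{nK-j}$ adapted, but here $T^{-k}\cF = \cF$ and the analogous filtration is trivial. Instead one must run the stochastic integration against the forward martingale built from $m$ in the filtration $\{\cF_j\}$ -- essentially the $\tM_n^-$ of Theorem~\ref{thm:WIPinv} -- and use the splitting $\chi = \chi_- + \chi_+$ with $\chi_- = \sum_{j\le -1} a_j$ and $\chi_+ = \sum_{j\ge 0} a_j$ to write $v = m + (\chi_+\circ T - \chi_+) + (\chi_-\circ T - \chi_-)$. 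The non-adapted integrand $W_n^-$ can then be replaced, modulo an $L^2$-negligible error, by its $\cF_0$-measurable martingale-coboundary proxy coming from $m$ and $\chi_+$. This is where the hybrid criterion~\eqref{eq:Hinv2} is essential: the $L^2$ part gives $\chi_+ \in L^2$, making $\chi_+ \circ T - \chi_+$ negligible in the $L^2$-sense needed for KP; the $L^1$ part of~\eqref{eq:Hinv2} is absorbed by the telescoping-plus-ergodic-theorem argument of the first step above.
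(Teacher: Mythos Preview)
Your overall architecture is right and close to the paper's, but there are two related errors that happen to cancel.

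First, the claimed identity $\int_\Lambda \chi\otimes v\,d\mu = \sum_{j\ge1}\int_\Lambda v\otimes(v\circ T^j)\,d\mu$ is false in the invertible setting. In Section~\ref{sec:Non} it held because $\chi=\sum_{j\ge1}P^jv$ and $P$ is dual to composition with $T$; here $\chi=\chi_-+\chi_+$ with $\chi_+=\sum_{j\ge0}(\E_0(v\circ T^j)-v\circ T^j)$, and no such duality is available since $v$ is not $\cF_0$-measurable. The paper's Proposition~\ref{prop:drift} shows instead that
\[
\sum_{j\ge1}\int_\Lambda v\otimes(v\circ T^j)\,d\mu = \int_\Lambda\big(\chi\otimes v - m\otimes(\chi_+\circ T)\big)\,d\mu,
\]
and the proof of this identity is precisely where mixing is used --- not in the Zweim\"uller step, which needs only ergodicity.

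Second, and correspondingly, your $\BBM_n$ does not converge to the pure It\^o integral $\BBM$. You correctly note that Kurtz--Protter requires an adapted integrand, and that replacing $W_n^-$ by the $\cF_0$-measurable proxy $\hW_n^-$ built from $\hv:=v-(\chi_+\circ T-\chi_+)$ costs only $o(1)$ in $L^2$ at the level of the integrand (via Proposition~\ref{prop:max}, since $\chi_+\in L^2$). But this negligibility does \emph{not} propagate to the iterated sum: writing $\hBBM_n$ for the process with $\hv$ in the second slot, the difference
\[
\BBM_n(t)-\hBBM_n(t)=\frac1n\sum_{0\le i<j\le[nt]-1}(m\circ T^i)\otimes\big((\chi_+\circ T-\chi_+)\circ T^j\big)
\]
telescopes in $j$ and, after an application of the ergodic theorem to $(m\otimes(\chi_+\circ T))\circ T^i$, converges to the nonzero drift $-t\int_\Lambda m\otimes(\chi_+\circ T)\,d\mu$ --- exactly the term missing from your first step. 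The paper therefore works throughout with $\hBBM_n$ (Lemma~\ref{lem:BBMinv}), obtains $\hBBM_n\to_w\BBM$ cleanly via Kurtz--Protter, and tracks the $m\otimes\chi_+$ contribution explicitly in the decomposition of $\BBW_n-\hBBM_n$. Once you route the argument through $\hv$ in this way, your scheme becomes the paper's proof.
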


Write  
\[
\chi=\chi_-+\chi_+,
\qquad
\chi_-=\sum_{j=1}^\infty \E_0(v\circ T^{-j}),
\qquad
\chi_+=\sum_{j=0}^\infty (\E_0(v\circ T^j)-v\circ T^j).
\]
By~\eqref{eq:Hinv2}, $\chi_-\in L^1$ and $\chi_+\in L^2$.
Define $\hv\in L^2$ by
\[
v=\hv+\chi_+\circ T-\chi_+.
\]
Then 
\[
\hv=\sum_{j=0}^\infty\big\{ \E_0(v\circ T^j)-\big(\E_0(v\circ T^j)\big)\circ T\big\}
\]
 is $\cF_0$-measurable.

Define 
\[
\hBBM_n\in D([0,\infty),\R^{d\times d}), \qquad
\hBBM_n(t)=\frac1n\sum_{0\le i< j\le[nt]-1}(m\circ T^i)\otimes (\hv\circ T^j),
\]
where $m$ is as in~\eqref{eq:martinv}.
(This differs from the definition of $\BBM_n$ in Section~\ref{sec:Non}; we use $\hat v$ instead of $v$ since $v$ is not $\cF_0$-measurable.)

\begin{lemma} \label{lem:BBMinv}
$(W_n,\hBBM_n)\to_w (W,\BBM)$ in $D([0,\infty),\R^d\times\R^{d\times d})$ as $n\to\infty$ on $(\Lambda,\mu)$, where $\BBM(t)=\int_0^t W\otimes dW$.
\end{lemma}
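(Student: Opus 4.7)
The plan is to adapt the three-step structure of Lemma~\ref{lem:BBM} to the invertible setting, with the single genuinely new ingredient being the replacement of $v$ by the $\cF_0$-measurable observable $\hv$ in the ``adapted'' slot of the Kurtz--Protter martingale invariance principle. I fix an integer $K\ge1$ throughout and note that it suffices to prove convergence in $D([0,K],\R^d\times\R^{d\times d})$. Setting up the backward processes, I introduce $W_n^-,\,M_n^-$ as in~\eqref{eq:WM-} together with
\[
\hat W_n^-(t) = \frac{1}{\sqrt n}\sum_{j=1}^{[nt]}\hv\circ T^{nK-j},\qquad
\hBBM_n^-(t) = \frac{1}{n}\sum_{1\le i<j\le[nt]}(\hv\circ T^{nK-i})\otimes(m\circ T^{nK-j}).
\]
Under the filtration $\cG_{n,j}=\cF_{j-nK}$ (nondecreasing in $j$), $\hat W_n^-$ is adapted since $\hv$ is $\cF_0$-measurable, and $\{m\circ T^{nK-j}\}$ is a square-integrable martingale-difference sequence since $m\in L^2$ (Lemma~\ref{lem:L2inv}) is $\cF_0$-measurable with $\E_{-1}m=0$.

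The first step is to show $(W_n^-,\hat W_n^-,M_n^-)\to_w(W,W,W)$. The proof of Theorem~\ref{thm:WIPinv} already gives $W_n^-\to_w W$; the comparison~\eqref{eq:diff-} combined with Lemma~\ref{lem:diffinv} gives $M_n^-\to_w W$; and the identity $v-\hv=\chi_+-\chi_+\circ T$ telescopes, so that $W_n^-(t)-\hat W_n^-(t)$ reduces to a single boundary term of the form $n^{-1/2}(\chi_+\circ T^{a_n}-\chi_+\circ T^{b_n})$, which is $o_p(1)$ uniformly in $t\in[0,K]$ by Proposition~\ref{prop:max} applied to $\chi_+\in L^2$. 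These three statements, all with the same limit $W$, force the joint convergence by a Slutsky-type argument. The second step is to apply~\cite[Theorem~2.2]{KurtzProtter91} (or~\cite{JakubowskiMeminPages89}) to the adapted/martingale pair $(\hat W_n^-,M_n^-)$; the uniform bound $\int_\Lambda|M_n^-(t)|^2\,d\mu\le K|m|_2^2$ supplies the UT condition. Together with Step~1 this delivers
\[
(W_n^-,\hat W_n^-,M_n^-,\hBBM_n^-)\to_w(W,W,W,\BBM),\qquad \BBM(t)=\int_0^t W\otimes dW.
\]

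The third step is the transfer back to forward processes. I would define $g$ and $h=(h^1,h^2)$ exactly as in the proof of Lemma~\ref{lem:BBM}, and the same reindexing of sums as there gives
\[
W_n(t)=g(W_n^-)(t)-F_n^1(t),\qquad
\hBBM_n(t)=h^2(\hat W_n^-,M_n^-,\hBBM_n^-)(t)-F_n^2(t)^*,
\]
with $\sup_t|F_n^1|\le n^{-1/2}|v|_\infty\to0$ and $F_n^2(t)$ either zero or an outer product of a single factor $\hv\circ T^{k_n}$ with a partial sum of $m\circ T^{nK-j}$. The continuous mapping theorem, \cite[Proposition~4.9]{KM16}, and the distributional identity $(g(W),h^2(W,W,\BBM))=_d(W,\BBM)$ supplied by~\cite[Lemma~4.11]{KM16} then close the argument.

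The main obstacle will be showing $\sup_t|F_n^2(t)|\to_p 0$, since $\hv$ lies only in $L^2$ rather than $L^\infty$ and one cannot factor out $|\hv|_\infty$ pointwise as in Lemma~\ref{lem:BBM}. Instead I would bound, via Cauchy--Schwarz,
\[
\E\sup_{t\in[0,K]}|F_n^2(t)|\ll n^{-1}\big|\max_{k\le nK}|\hv\circ T^k|\big|_2\,\big|\max_{\ell\le nK}|m_\ell|\big|_2,
\]
and then apply Proposition~\ref{prop:max} to the $L^2$ variable $\hv$ to get $\big|\max_{k\le nK}|\hv\circ T^k|\big|_2=o(\sqrt n)$, together with Doob's maximal inequality applied to the martingale with increments $m\circ T^{-j}$ to get $\big|\max_{\ell\le nK}|m_\ell|\big|_2\ll\sqrt n$, so that the product is $o(1)$ as required.
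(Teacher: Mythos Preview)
Your proposal is correct and follows essentially the same three-step approach as the paper. The only organisational differences are that the paper first obtains $(\hat W_n,\hBBM_n)\to_w(W,\BBM)$ via $h$ applied to $(\hat W_n^-,M_n^-,\hBBM_n^-)$ and then replaces $\hat W_n$ by $W_n$ at the very end (using $\frac{1}{\sqrt n}\big|\max_{\ell\le n}|(v-\hat v)_\ell|\big|_2\to0$), whereas you carry $W_n^-$ alongside in the joint convergence from Step~1; also, you spell out the Cauchy--Schwarz/Proposition~\ref{prop:max} treatment of the error $F_n^2$, which the paper simply asserts goes to zero in probability without detail.
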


\begin{proof}
Fix $K\ge1$ an integer and define $W_n^-$, $M_n^-$ as in~\eqref{eq:WM-}.
As shown in the proof of Theorem~\ref{thm:WIPinv},
$M_n^-\to_w W$ in $D([0,[0,K],\R^d)$.
By the continuous mapping theorem,
$(M_n^-,M_n^-)\to_w (W,W)$ in $D([0,K]),\R^d\times\R^d)$.

Define 
\[
\hW_n(t)=\frac1n\sum_{0\le j\le [nt]-1} \hv\circ T^j, \qquad
\hW_n^-(t)=\frac1n\sum_{1\le j\le [nt]} \hv\circ T^{nK-j}.
\]
By Lemma~\ref{lem:diffinv},
$\frac{1}{\sqrt n}\big|\max_{1\le \ell\le n}|(v-m)_\ell|\big|_2\to0$.
Also,
$\frac{1}{\sqrt n}\big|\max_{1\le \ell\le n}|(v-\hat v)_\ell|\big|_2
\le \frac{2}{\sqrt n}|\max_{1\le \ell\le n}\chi_+\circ T^\ell|_2\to0$ 
by Proposition~\ref{prop:max}.
Hence $\frac{1}{\sqrt n}\big|\max_{1\le \ell\le n}|(\hv-m)_\ell|\big|_2\to0$.
It follows that
\[
(\hW_n^-,M_n^-)\to_w (W,W)
\quad \text{ in $D([0,K],\R^d\times\R^d)$.}
\]

Define 
\[
\hBBM_n^-(t)=\frac1n\sum_{1\le j< i\le [nt]}(\hv\circ T^{-j})\otimes (m\circ T^{-i}).
\]
We apply~\cite{JakubowskiMeminPages89,KurtzProtter91} as in Step~2 of the proof of Lemma~\ref{lem:BBM}: $M_n^-$ is a martingale and $\hW_n^-$ is adapted with respect to the filtration $\cF_j$.  Moreover, 
$\int_\Lambda|M_n^-(t)|^2\,d\mu=
n^{-1}[nt]\int_\Lambda|m|^2\,d\mu\le K|m|_2^2$ for all $t\in[0,K]$
so 
condition~C2.2(i) in~\cite[Theorem~2.2]{KurtzProtter91} is satisfied.
Hence
\[
(\hW_n^-, M_n^-, \hBBM_n^-) \to_w (W,W,\BBM)
\quad\text{in $D([0,K],\R^d\times\R^d\times\R^{d\times d})$.}
\]

Next, define 
$h:D([0,K],\R^d\times\R^d\times\R^{d\times d})\to
\tD([0,K],\R^d\times\R^{d\times d})$,
as in Step~3 of the proof of Lemma~\ref{lem:BBM}.
Then 
\[
\SMALL (\hW_n,\hBBM_n)=h(\hW_n^-,M_n^-,\hBBM_n^-)-F_n\quad\text{where}\quad
\sup_{t\in[0,K]}|F_n(t)|\to_p 0,
\]
and we deduce that 
\[
(\hW_n, \hBBM_n) \to_w (W,\BBM)
\quad\text{in $D([0,K],\R^d\times\R^{d\times d})$.}
\]
Using once again that
$\frac{1}{\sqrt n}\big|\max_{1\le \ell\le n}|(v-\hat v)_\ell|\big|_2 \to0$,
we obtain the desired result.
\end{proof}

\begin{prop} \label{prop:drift}
$\sum_{j=1}^\infty \int_\Lambda v\otimes(v\circ T^j)\,d\mu
=\int_\Lambda \big(\chi\otimes v-m\otimes(\chi_+\circ T) \big)\,d\mu$.
\end{prop}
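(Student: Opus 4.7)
The plan is to substitute the martingale decomposition $v=m+\chi\circ T-\chi$ from~\eqref{eq:martinv} into the first factor of the correlation and then exploit a telescoping argument combined with mixing.

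First I would compute, using $T$-invariance of $\mu$,
\[
\int_\Lambda v\otimes(v\circ T^j)\,d\mu
=\int_\Lambda m\otimes(v\circ T^j)\,d\mu
+\int_\Lambda \chi\otimes(v\circ T^{j-1})\,d\mu
-\int_\Lambda \chi\otimes(v\circ T^j)\,d\mu.
\]
Summing over $j=1,\dots,N$, the two $\chi$-contributions telescope, leaving $\int_\Lambda \chi\otimes v\,d\mu-\int_\Lambda \chi\otimes(v\circ T^N)\,d\mu$. Since $T$ is mixing, $\chi\in L^1$, $v\in L^\infty$ and $\int_\Lambda v\,d\mu=0$, the tail $\int_\Lambda \chi\otimes(v\circ T^N)\,d\mu$ vanishes as $N\to\infty$. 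Hence
\[
\sum_{j=1}^\infty \int_\Lambda v\otimes(v\circ T^j)\,d\mu
=\sum_{j=1}^\infty \int_\Lambda m\otimes(v\circ T^j)\,d\mu
+\int_\Lambda \chi\otimes v\,d\mu.
\]

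Next I would rewrite the remaining sum using the decomposition $v=\hv+\chi_+\circ T-\chi_+$. Telescoping gives
\[
\sum_{j=1}^N v\circ T^j=\sum_{j=1}^N \hv\circ T^j+\chi_+\circ T^{N+1}-\chi_+\circ T.
\]
The key observation is that $\hv$ is $\cF_0$-measurable, so $\hv\circ T^j$ is $\cF_{-j}$-measurable (hence $\cF_{-1}$-measurable) for $j\ge1$. Since $m$ is $\cF_0$-measurable with $\E_{-1}m=0$, conditioning on $\cF_{-1}$ yields $\int_\Lambda m\otimes(\hv\circ T^j)\,d\mu=0$ for every $j\ge1$. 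Applying mixing once more to $\int_\Lambda m\otimes(\chi_+\circ T^{N+1})\,d\mu$ (now with $m,\chi_+\in L^2$ and $\int_\Lambda m\,d\mu=0$, the latter following from $\int_\Lambda v\,d\mu=0$ and $T$-invariance) produces
\[
\sum_{j=1}^\infty \int_\Lambda m\otimes(v\circ T^j)\,d\mu=-\int_\Lambda m\otimes(\chi_+\circ T)\,d\mu,
\]
which combines with the previous display to give the claim.

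The main obstacle is justifying the two vanishing tails via mixing in the appropriate duality classes ($L^1\times L^\infty$ for the $\chi$ term and $L^2\times L^2$ for the $\chi_+$ term); this is exactly where the mixing hypothesis on $T$ in Theorem~\ref{thm:IWIPinv} gets used, and also where the $L^2$ control of $\chi_+$ provided by~\eqref{eq:Hinv2} (as opposed to merely $L^1$ as in~\eqref{eq:Hinv}) is essential. Everything else is bookkeeping with the martingale-coboundary decompositions already established in~\eqref{eq:martinv} and in the definition of $\hv$.
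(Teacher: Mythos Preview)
Your argument is correct and follows essentially the same route as the paper's proof: substitute $v=m+\chi\circ T-\chi$ in the first factor, telescope the $\chi$-part and use $L^1\times L^\infty$ mixing to kill the tail, then substitute $v=\hv+\chi_+\circ T-\chi_+$ in the second factor, use $\E_{-1}m=0$ together with $\cF_0$-measurability of $\hv$ to annihilate the $\hv$-terms, and apply $L^2\times L^2$ mixing for the $\chi_+$-tail. The only cosmetic difference is that the paper performs both substitutions at once and splits the finite sum into three pieces $I_1+I_2+I_3$, whereas you do the two substitutions sequentially; the content is identical.
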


\begin{proof}
Write
\begin{align*}
v\otimes(v\circ T^j)
& =(m+\chi\circ T-\chi)\otimes (v\circ T^j) \\
& = 
m\otimes(\hv\circ T^j+\chi_+\circ T^{j+1}-\chi_+\circ T^j)
+(\chi\circ T-\chi)\otimes (v\circ T^j).
\end{align*}
Then
\(
\sum_{j=1}^n \int_\Lambda v\otimes(v\circ T^j)\,d\mu=I_1+I_2+I_3
\)
where
\begin{align*}
I_1 & =
\sum_{j=1}^n\int_\Lambda m\otimes(\hv\circ T^j)\,d\mu, \qquad
 I_2  = \int_\Lambda m\otimes(\chi_+\circ T^{n+1}-\chi_+\circ T)\,d\mu,
\\ I_3 & = 
\int_\Lambda\sum_{j=1}^n (\chi\circ T-\chi)\otimes v\circ T^j)\,d\mu. 
\end{align*}
Now, 
\[
\E\big(m\otimes(\hv\circ T^j)\big)
=\E\E_{-j}\big(m\otimes(\hv\circ T^j)\big)
=\E\big((\E_{-j}m)\otimes(\hv\circ T^j)\big)=0,
\]
so $I_1=0$.  Since $T$ is mixing, $I_2\to -\int_\Lambda m\otimes(\chi_+\circ T)\,d\mu$.
Finally,
\[
I_3=\int_\Lambda\sum_{j=1}^n (\chi\circ T^{-(j-1)}-\chi\circ T^{-j})\otimes v\,d\mu
=\int_\Lambda(\chi-\chi\circ T^{-n})\otimes v\,d\mu
\to \int_\Lambda \chi\otimes v\,d\mu
\]
since $T$ is mixing.
\end{proof}

\begin{pfof}{Theorem~\ref{thm:IWIPinv}}
Write
\begin{align*}
(v\circ T^i)\otimes(v\circ T^j) 
& = (m\circ T^i)\otimes(v\circ T^j) +
(\chi\circ T^{i+1}-\chi\circ T^i)\otimes(v\circ T^j)  \\
& = (m\circ T^i)\otimes(\hv\circ T^j) +
 (m\circ T^i)\otimes(\chi_+\circ T^{j+1}-\chi_+\circ T^j) 
\\ & \qquad\qquad \qquad\qquad
 \qquad\qquad +
(\chi\circ T^{i+1}-\chi\circ T^i)\otimes(v\circ T^j).
\end{align*}
Then
\begin{align*}
\BBW_n(t)-\hBBM_n(t)
 & = 
\frac1n \sum_{0\le i\le [nt]-2}(m\circ T^i)\otimes (\chi_+\circ T^{[nt]}-\chi_+\circ T^{i+1})
\\ & \qquad\qquad +\frac1n \sum_{1\le j\le [nt]-1}(\chi\circ T^j-\chi)\otimes (v\circ T^j)=\frac1n(A_n(t)+B_n(t))
\end{align*}
where
\begin{align*}
A_n(t) & = 
\sum_{0\le i\le[nt]-2}(m\circ T^i)\otimes (\chi_+\circ T^{[nt]})
\\
B_n(t) & = 
-\sum_{0\le i\le[nt]-2}(m\otimes (\chi_+\circ T))\circ  T^i
+ \sum_{1\le j\le[nt]-1}(\chi\otimes v)\circ T^j
- \chi\otimes\sum_{1\le j\le[nt]-1}v\circ T^j.
\end{align*}

Recall that $v\in L^\infty$, $\chi\in L^1$ and $\chi_+,\,m\in L^2$.  
By the ergodic theorem, 
$\frac1n B_n(1)\to
\int_\Lambda ( \chi\otimes v - m\otimes(\chi_+\circ T))\,d\mu \;\text{a.e.}$
and hence
\[
\frac1n\sup_{t\in[0,K]}\Big|B_n(t)-t\int_\Lambda ( \chi\otimes v - m\otimes(\chi_+\circ T)) \,d\mu\Big|\to0
\quad\text{a.e.}
\]
Also,
\begin{align*}
\frac1n\big|\sup_{t\in[0,K]}A_n(t)\big|_1
& \le 
\frac{1}{\sqrt n}\Big|\max_{1\le \ell\le nK}\big|\sum_{i<\ell}m\circ T^i\big|\Big|_2
\,\frac{1}{\sqrt n}\big|\max_{1\le \ell\le nK}\chi_+\circ T^\ell\big|_2\to0
\\ & \le 2\sqrt{K}|m|_2  
\frac{1}{\sqrt n}\big|\max_{1\le \ell\le nK}\chi_+\circ T^\ell\big|_2\to0
\end{align*}
by Doob's inequality and Proposition~\ref{prop:max}.
Hence
\[
\sup_{t\in[0,K]}\Big|\BBW_n(t)-\hBBM_n(t)-
t\int_\Lambda \big(\chi\otimes v-m\otimes(\chi_+\circ T) \big)\,d\mu\Big|\to_p0.
\]
By this 
combined with Lemma~\ref{lem:BBMinv} and Proposition~\ref{prop:drift},
$(W_n,\BBW_n)\to_w (W,\BBW)$ on $(\Lambda,\mu)$.

Finally, we consider the case where $\nu$ is a general probability measure absolutely continuous with respect to $\mu$.
As in the proof of Theorem~\ref{thm:IWIP}, it suffices to establish~\eqref{eq:bfW}
for all $\eps>0$.
The estimates
\[
|W_n(t)\circ T-W_n(t)|\le 2n^{-1/2}|v|_\infty,\qquad 
|\BBW_n(t)\circ T-\BBW_n(t)|\le 
2n^{-1}|v|_\infty \max_{1\le k\le nK}|v_k|
\]
hold as before
for all $t\in[0,K]$.
Hence it suffices to show that $\big|\max_{1\le k\le nK}|v_k|\big|_2\ll n^{1/2}$.

Write $v=w+w'$ where $w=\E_0v$, $w'=v-\E_0v$.
By Propositions~\ref{prop:Rioinv}  and~\ref{prop:DR},
\[
\big|\max_{1\le k\le nK}|w_k|\big|_2\ll 
\Big(n|v|_\infty \sum_{j\ge0}|\E_0(w\circ T^{-j})|_1\Big)^{1/2} 
=\Big(n|v|_\infty \sum_{j\ge0}|\E_0(v\circ T^{-j})|_1\Big)^{1/2} 
\]
and 
\begin{align*}
\big|\max_{1\le k\le nK}|w'_k|\big|_2 & \ll 
\Big(n|v|_\infty \sum_{j\ge0}|\E_0(w'\circ T^j)-w'\circ T^j|_1\Big)^{1/2} 
\\ & =\Big(n|v|_\infty \sum_{j\ge0}|\E_0(v\circ T^j)-v\circ T^j|_1\Big)^{1/2}.
\end{align*}
Hence the required estimate for $\max_{1\le k\le nK}|v_k|$ follows from~\eqref{eq:Hinv}.
\end{pfof}

\section{Examples}
\label{sec:ex}

In this section, we consider examples consisting of time-one maps of nonuniformly expanding semiflows and
nonuniformly hyperbolic flows to which our theory applies and gives new results.

\subsection{Noninvertible setting}
\label{sec:ex1}

We begin by revisiting nonuniformly expanding maps modelled by one-sided Young towers~\cite{Young99}.
Optimal results for the iterated WIP were obtained by~\cite{KM16} and we recover their result.  
In particular,~\cite{Young99} proved results on decay of correlations; Theorem~\ref{thm:IWIP} applies whenever the decay of correlations is summable by Proposition~\ref{prop:H}.
As described below, we are moreover able to treat time-one maps of nonuniformly expanding semiflows, significantly improving on existing results.

It is convenient to mention a specific family of dynamical systems.
Prototypical examples of nonuniformly expanding map are given by intermittent maps of Pomeau-Manneville type~\cite{PomeauManneville80}.
For definiteness, we consider the example considered by~\cite{LSV99}, namely
\begin{align} \label{eq:LSV}
f:[0,1]\to[0,1], \qquad f(x)=\begin{cases} x(1+2^\gamma x^\gamma) & x<\frac12 \\
2x-1 & x>\frac12 \end{cases}.
\end{align}
Here $\gamma\in[0,1)$ is a parameter and there is a unique absolutely continuous invariant probability measure $\mu_0$ for each $\gamma$.

Let $v:[0,1]\to\R^d$ be H\"older with $\int_0^1 v\,d\mu_0=0$.
By~\cite{Hu04,Young99}, there is a constant $C>0$ such that $|\int_0^1 v\,w\circ T^m\,d\mu_0|\le Cn^{-(\gamma^{-1}-1)}|w|_\infty$ for all $w\in L^\infty([0,1],\R)$.  By Proposition~\ref{prop:H},
hypothesis~\eqref{eq:H} holds for $\gamma<\frac12$.
Hence we obtain the iterated WIP, Theorem~\ref{thm:IWIP}, 
for all $\gamma<\frac12$.
This recovers a result of~\cite[Example~10.3]{KM16} and it is sharp since even the CLT fails for $\gamma\in[\frac12,1)$ when $v(0)\neq0$ by~\cite{Gouezel04}.

Now we consider suspension semiflows and their time-one maps to obtain new examples where the iterated  WIP holds.  Again we consider the specific example~\eqref{eq:LSV} for definiteness, but $f$ could be replaced by any nonuniformly expanding map modelled by a Young tower.  Let $h:[0,1]\to(0,\infty)$ be a H\"older roof function and define the suspension semiflow
$f_t:\Lambda\to\Lambda$ where
\[
\Lambda=\{(x,u)\in[0,1]\times\R:0\le u\le h(x)\}/\sim,
\qquad (x,h(x))\sim(fx,0),
\]
and $f_t(x,u)=(x,u+t)$ computed modulo identifications.
The probability measure $\mu=(\mu_0\times\Leb)/\int_0^1 h\,d\mu_0$
is $f_t$-invariant and ergodic.  
At the level of the semiflow $f_t$, when $\gamma<\frac12$
the iterated WIP holds for H\"older mean zero observables $v$ by~\cite[Theorem~6.1]{KM16}.

Now consider the time-one map $T=f_1:\Lambda\to\Lambda$.
In general, even the CLT is not known for such maps.  By~\cite{M09,M18},
typically (under a non-approximate eigenfunction condition due to~\cite{Dolgopyat98b}) $T$ has decay of correlations at the same rate as $f$ for sufficiently smooth observables.  
(We refer to~\cite[Section~3]{M18} for details regarding the class of observables $v$ and~\cite[Section~5]{M18} for details regarding the word ``typical''.)
A consequence~\cite[Theorem~4.3 and Proposition~4.4]{KM16} is that the iterated WIP holds for $\gamma<\frac13$.
Previously, the range $\gamma\in[\frac13,\frac12)$ remained open.
But Proposition~\ref{prop:H} again implies that the $L^1$ Gordin criterion~\eqref{eq:H} holds for all $\gamma<\frac12$.
Hence, by Theorem~\ref{thm:IWIP}, the iterated WIP holds in the optimal range $\gamma<\frac12$.

\subsection{Invertible setting}
We begin by revisiting nonuniformly hyperbolic maps modelled by two-sided Young towers~\cite{Young98,Young99}.
Optimal results in this setting were obtained 
by~\cite{MV16} (see also~\cite[Section~10.2]{KM16}).
(Unlike in the noninvertible setting, Theorem~\ref{thm:IWIPinv} does not recover this result since the iterated WIP is not known under the $L^1$ Gordin criterion~\eqref{eq:Hinv}.)

Examples of nonuniformly hyperbolic maps include intermittent solenoids~\cite[Section~5]{AlvesPinheiro08} and~\cite[Example 4.2]{MV16}.  These are invertible analogues of the intermittent maps in Subsection~\ref{sec:ex1} and are obtained by adapting the classical Smale-Williams solenoid construction~\cite{Smale67,Williams67}.  There is an invariant  contracting stable foliation $\cW^s$ as in Proposition~\ref{prop:Hinv} and the dynamics modulo the stable leaves is given by an intermittent map.  In particular, condition~(a) in Proposition~\ref{prop:Hinv} is satisfied for $p<\gamma^{-1}-1$ where $\gamma$ is the parameter for the intermittent map.
The examples in~\cite{AlvesPinheiro08} and some of the examples in~\cite{MV16} have exponential contraction along
stable leaves.  For these examples and $v$ H\"older, condition~(b) in Proposition~\ref{prop:Hinv} is satisfied for all $p\in[1,\infty)$ and hence Theorem~\ref{thm:IWIPinv} applies for all $\gamma<\frac12$.  The remaining examples in~\cite{MV16} have contraction along stable leaves which is as slow as the expansion of the underlying intermittent map, and condition~(b) in Proposition~\ref{prop:Hinv} is satisfied for $p<\gamma^{-1}-1$; hence Theorem~\ref{thm:IWIPinv} applies for $\gamma<\frac13$.  

As in Subsection~\ref{sec:ex1}, we consider intermittent solenoidal flows given by suspensions over intermittent solenoids.  Optimal results on the iterated WIP for such flows follow by combining~\cite{MV16} and~\cite[Theorem~6.1]{KM16}.  
Again we focus on time-one maps of intermittent solenoid flows, restricting to typical flows and sufficiently smooth observables.  Previous results on the iterated WIP in this context apply only for $\gamma<\frac13$;  
we considerably relax this restriction.
By the arguments in~\cite{DMNapp}, 
the conditions of Proposition~\ref{prop:Hinv} hold for the same values of $p$ as in the case of intermittent solenoids described above.  Hence for the examples with exponential contraction along stable leaves, we obtain the iterated WIP for all $\gamma<\frac12$.  Exponential contraction can be relaxed to moderately fast polynomial contraction as discussed in~\cite{AlvesAzevedo16}.
Explicit examples with $\gamma\in[\frac13,\frac12)$ and condition~(b) of Proposition~\ref{prop:Hinv} holding for $p=2$ can be found in~\cite{GaltonPhD}.
For such examples, the iterated WIP follows from Theorem~\ref{thm:IWIPinv}; this is far beyond the scope of previous methods.

\section{Application to homogenisation}
\label{sec:fs}

Let $(\Lambda,\cF,\mu)$ be a probability space and 
$T:\Lambda\to\Lambda$ be an ergodic measure-preserving map.
Consider the fast-slow system 
\begin{align*}
\x_{n+1} & = \x_n+ \eps^2 a(\x_n)+\eps b(\x_n)v(y_n), \quad \x_0=\xi\in\R^d,  \\
y_{n+1}  & = Ty_n,
\end{align*}
where $a:\R^d\to\R^d$,  $b:\R^d\to\R^{d\times d}$ and
$v\in L^\infty(\Lambda,\R^d)$ with $\int_\Lambda v\,d\mu=0$.

Define $\hx(t)=\x_{[t/\eps^2]}$ and
\[
W_\eps(t)=\eps\sum_{0\le j\le [t/\eps^2]-1}v(y_j), \qquad
\BBW_\eps(t)=\eps^2\sum_{0\le i<j\le [t/\eps^2]-1} v(y_i)\otimes v(y_j).
\]

The aim is to prove homogenisation to a stochastic differential equation (SDE)
of the type 
\[
dX=\tilde a(X)\,dt + b(X)\, dW, \quad X(0)=\xi,
\]
where $\tilde a:\R^d\to\R^d$ is to be determined; i.e.\ to show that
$\hx\to_w X$ in $D[0,\infty),\R^d)$ as $\eps\to0$.

In the special case where $a$ and $b$ are Lipschitz, and $b$ satisfies an exactness condition of the form
$b=(dh)^{-1}$ for some $h:\R^d\to\R^d$, this problem was completely solved by~\cite{GM13b}: It is necessary and sufficient that $v$ satisfies the WIP.  Hence the $L^1$ Gordin criterion suffices by Theorems~\ref{thm:IWIP} and~\ref{thm:WIPinv}.

When the exactness condition for  $b$ fails,~\cite{KM16,KKMsub} proved homogenisation for
$a\in C^{1+}$ and $b\in C^{2+}$ under an
$L^4$ Gordin criterion on $v$ (see~\cite[Theorem~2.9]{DMNapp}).
Moreover, $\tilde a(X)=a(X)+\frac12\sum_{\alpha,\beta,\gamma=1}^d E^{\gamma\beta}\partial_\alpha b^\beta(X) b^{\alpha\gamma}(X)$
where $E$ is the matrix in the iterated WIP.\footnote{There is a typo in~\cite{KM16} and subsequent papers; the matrix entry $E^{\beta\gamma}$ should be replaced by
$E^{\gamma\beta}$ as written here.}

In certain special cases, our results yield homogenisation theorems where the previous papers do not.  One such example is the following:

\begin{prop}
Let $d=2$ and write $x=(x^1,x^2)$.  Let
\[
a(x)=\left(\begin{array}{c} 0   \\  g(x) \end{array}\right),
\qquad
b(x)=\left(\begin{array}{cc} 1  & 0 \\ 0 & x^1 \end{array}\right),
\]
where $g:\R^2\to\R$ is Lipschitz.
Suppose that $v$ satisfies either the $L^1$ Gordin criterion~\eqref{eq:H} in the noninvertible setting or the hybrid $L^1$--$L^2$ Gordin criterion~\eqref{eq:Hinv2} in the invertible setting.
In particular, $\BBW_\eps^{12}(1)\to_w \BBW^{12}(1)+ c$ for some $c\in\R$.

Then $\hx\to_w X$ in $D([0,\infty),\R^2)$ as $\eps\to0$ where $X$ is the solution to the SDE
\[
dX=\tilde a(X)\,dt + b(X)\,dW, \quad X(0)=\xi,
\]
with $\tilde a(X)=\left(\begin{array}{c} 0 \\ g(X)+ c
\end{array}\right)$.
\end{prop}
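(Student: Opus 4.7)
My plan is to exploit the very simple form of $a$ and $b$: the dynamics of $\x^1$ decouples entirely from $\x^2$, and the noise coefficient depends linearly and only on $\x^1$. Setting $n=1/\eps^2$ and iterating the recursion, the identity $\x_k^1=\xi^1+\eps\sum_{j=0}^{k-1}v^1(y_j)$ collapses the discrete double sum $\eps\sum_{k<[nt]}\x_k^1\,v^2(y_k)$ to exactly $\xi^1 W_n^2(t)+\BBW_n^{12}(t)$, yielding closed formulas
\[
\hx^1(t)=\xi^1+W_n^1(t),\qquad
\hx^2(t)=\xi^2+\xi^1 W_n^2(t)+\BBW_n^{12}(t)+\frac{1}{n}\sum_{k=0}^{[nt]-1}g(\hx(k/n)).
\]
Thus the entire rescaled fast-slow process is expressed as a continuous-plus-Lipschitz functional of $(W_n,\BBW_n)$ together with a Riemann sum in $g$.

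Next I would apply Theorem~\ref{thm:IWIP} (noninvertible case) or Theorem~\ref{thm:IWIPinv} (invertible case) to obtain joint weak convergence $(W_n,\BBW_n)\to_w(W,\BBW)$ on $D([0,T],\R^2\times\R^{2\times 2})$ for every $T>0$, where $\BBW^{12}(t)=\int_0^t W^1\,dW^2+ct$ with $c=\sum_{j\ge1}\int_\Lambda v^1(v^2\circ T^j)\,d\mu$. Via Skorokhod's representation I would realise this convergence as almost sure on a common probability space; since the limit has continuous sample paths, Skorokhod convergence upgrades automatically to uniform convergence on each $[0,T]$. I would then define the pathwise solution map $(W,\BBW)\mapsto X$ by $X^1=\xi^1+W^1$ and
\[
X^2(t)=\xi^2+\xi^1 W^2(t)+\BBW^{12}(t)+\int_0^t g(X(s))\,ds,
\]
and deduce $\hx\to X$ uniformly on $[0,T]$ by a standard Gronwall estimate for $\sup_{s\le t}|\hx^2(s)-X^2(s)|$, using that $g$ is globally Lipschitz and that $\xi^1 W_n^2+\BBW_n^{12}$ and $\hx^1$ both converge uniformly. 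Finally, $\xi^1 W^2(t)+\int_0^t W^1\,dW^2=\int_0^t X^1\,dW^2$ and absorbing the $ct$ into the slow vector field identifies $X$ as the solution of the claimed SDE with $\tilde a=(0,g+c)^T$ and dispersion $b$.

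The main obstacle I foresee is the self-coupling of $\hx^2$ through the Riemann sum together with the fact that the iterated WIP gives only weak (not pathwise) convergence of $(W_n,\BBW_n)$. Both are handled cleanly here because $b$ has no $x^2$-dependence at all, so no rough-path machinery in the style of~\cite{KM16,CFKMZ19} and no higher-moment hypothesis on $v$ is needed, and because Skorokhod's representation followed by the Lipschitz Gronwall step converts weak into uniform convergence. Some care is still required to verify that the discretization error between $\frac{1}{n}\sum_{k<[nt]}g(\hx(k/n))$ and $\int_0^t g(X(s))\,ds$ is negligible uniformly in $t\in[0,T]$, but this reduces to a routine estimate using Lipschitz continuity of $g$ together with almost sure local boundedness of $\hx$ and $X$ on $[0,T]$.
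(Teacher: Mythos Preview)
Your proposal is correct and follows essentially the same route as the paper: derive the closed formula $\hx^1=\xi^1+W_n^1$ and $\hx^2=\xi^2+\xi^1 W_n^2+\BBW_n^{12}+\text{(Riemann sum in }g)$, invoke the iterated WIP for $(W_n,\BBW_n)$, and pass to the limit using the Lipschitz structure of the integral equation. The only cosmetic difference is that the paper packages the last step as a continuous-mapping-theorem argument (writing $\hx=\cG(U_\eps)$ for the continuous ODE solution map $\cG$ driven by $U_\eps=(W_\eps^1,\,\xi^1W_\eps^2+\BBW_\eps^{12}+A_\eps)$), whereas you spell it out via Skorokhod representation plus Gronwall; these are equivalent.
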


\begin{proof}
We have
$\hx^1(t)  =\xi^1+W^1_\eps(t)$ and
\begin{align*}
\hx^2(t) & =\xi^2+\eps^2\sum_{j=0}^{[t\eps^{-2}]-1}g(\x_j)
+\eps\sum_{j=0}^{[t\eps^{-2}]-1}(\x_j)^1 v^2(y_j) \\
& =\xi^2+\int_0^t g(x_\eps(s))\,dt
+\int_0^t \hx^1(s)\,dW^2_\eps(s)+A_\eps(t) \\
& =\xi^2+\int_0^t g(x_\eps(s))\,ds+\int_0^t (\xi^1+W^1_\eps(s))\,dW_\eps^2(s) +A_\eps(t)
\\
& =\xi^2+\int_0^t g(x_\eps(s))\,ds+\xi^1 W_\eps^2(t)+\BBW_\eps^{12}(t)+A_\eps(t)
\end{align*}
where
\begin{align*}
|A_\eps(t)| & \le \eps^2|g|_\infty + \eps|v|_\infty \max_{0\le j\le \eps^{-2}t} |(\x_j)^1|.
\\ & \le \eps^2|g|_\infty + 
\eps|v|_\infty \xi^1
+\eps|v|_\infty \max_{0\le s\le t} |W_\eps^1(s)|.
\end{align*}

In other words,
\[
\hx(t)=\xi+\int_0^t a(x_\eps(s))\,ds+ U_\eps(t)
\]
where
\[
U_\eps=\left(\begin{array}{c} W^1_\eps  \\ \xi^1 W^2_\eps + \BBW^{12}_\eps +A_\eps  \end{array}\right).
\]
The resulting solution map $\hx=\cG(U_\eps)$ is continuous on $C([0,K],\R^2)$ for all $K>0$ since $a$ is Lipschitz.
By the iterated WIP, $U_\eps\to_w U$ where
\[
U=\left(\begin{array}{c} W^1  \\ \xi^1 W^2 + \BBW^{12} \end{array}\right),
\qquad \BBW^{12}(t)=\int_0^t W^1\, dW^2+ t E^{12},
\qquad E^{12}=c,
\]
so the continuous mapping theorem shows that $\hx\to_w \cG(U)=X$
where
\[
dX= a(X)\,dt + dU, \quad X(0)=\xi.
\]
But $U^1=W^1$ and
\[
U^2(t)
=\int_0^t (\xi^1+ W^1)\, dW^2+  tE^{12}=
\int_0^t X^1\,  dW^2+ tE^{12}
=\int_0^t X^1\,  dW^2+ tc,
\]
yielding
$\tilde a(X)=\left(\begin{array}{c} 0 \\ g(X)+ c
\end{array}\right)$ as required.
\end{proof}

 \paragraph{Acknowledgements}
 We are grateful to the referee for helpful comments and suggestions.


\begin{thebibliography}{10}

\bibitem{AlvesAzevedo16}
J.~F. Alves and D.~Azevedo. Statistical properties of diffeomorphisms with weak
  invariant manifolds. \emph{Discrete Contin. Dyn. Syst.} \textbf{36} (2016)
  1--41.

\bibitem{AlvesPinheiro08}
J.~F. Alves and V.~Pinheiro. Slow rates of mixing for dynamical systems with
  hyperbolic structures. \emph{J. Stat. Phys.} \textbf{131} (2008) 505--534.

\bibitem{CFKMZ19}
I.~Chevyrev, P.~K. Friz, A.~Korepanov, I.~Melbourne and H.~Zhang. Multiscale
  systems, homogenization, and rough paths. \emph{Probability and Analysis in
  Interacting Physical Systems: In Honor of S.R.S. Varadhan, Berlin, August,
  2016''} (P.~Friz et~al., ed.), Springer Proceedings in Mathematics \&
  Statistics \textbf{283}, Springer, 2019, pp.~17--48.

\bibitem{CFKMZsub}
I.~Chevyrev, P.~K. Friz, A.~Korepanov, I.~Melbourne and H.~Zhang.
  Deterministic homogenization under
  optimal moment assumptions for fast-slow systems. Part~2. 
To appear in
  \emph{Ann. Inst. H. Poincar\'e Probab. Statist.} 

\bibitem{DedeckerRio00}
J.~Dedecker and E.~Rio. On the functional central limit theorem for stationary
  processes. \emph{Ann. Inst. H. Poincar\'e Probab. Statist.} \textbf{36}
  (2000) 1--34.

\bibitem{DMNapp}
M.~Demers, I.~Melbourne and M.~Nicol. {Martingale approximations and
  anisotropic Banach spaces with an application to the time-one map of a
  Lorentz gas}. \emph{Nonlinearity} \textbf{33} (2020) 4095--4113.

\bibitem{Dolgopyat98b}
D.~Dolgopyat. {Prevalence of rapid mixing in hyperbolic flows}. \emph{Ergodic
  Theory Dynam. Systems} \textbf{18} (1998) 1097--1114.

\bibitem{Dolgopyat04}
D.~Dolgopyat. {Limit theorems for partially hyperbolic systems}. \emph{Trans.
  Amer. Math. Soc.} \textbf{356} (2004) 1637--1689.

\bibitem{Dolgopyat05}
D.~Dolgopyat. Averaging and invariant measures. \emph{Mosc. Math. J.}
  \textbf{5} (2005) 537--576, 742.

\bibitem{FrizHairer}
P.~K. Friz and M.~Hairer. \emph{A course on rough paths}. Universitext,
  Springer, Cham, 2014. With an introduction to regularity structures.

\bibitem{FrizVictoir}
P.~Friz and N.~Victoir. \emph{Multidimensional stochastic processes as rough
  paths}, Cambridge Studies in Advanced Mathematics \textbf{120}, Cambridge
  University Press, Cambridge, 2010. Theory and applications.

\bibitem{GaltonPhD}
M.~Galton. {Limit theorems for slowly mixing dynamical systems}. {\mbox{Ph.\
  D.} Thesis}. University of Warwick, 2020.

\bibitem{Gordin69}
M.~I. Gordin. {The central limit theorem for stationary processes}.
  \emph{Soviet Math. Dokl.} \textbf{10} (1969) 1174--1176.

\bibitem{Gordin73}
M.~I. Gordin. 
Central limit theorems for stationary processes without the assumption of finite variance. Abstracts of Communications, T.1:A-K. Pages 173--174, International Conference on Probability Theory and Mathematical Statistics, June 25--30, 1973, Vilnius.


\bibitem{GordinPeligrad11}
M.~I. Gordin and M.~Peligrad. On the functional central limit theorem via
  martingale approximation. \emph{Bernoulli} \textbf{17} (2011) 424--440.

\bibitem{GM13b}
G.~A. Gottwald and I.~Melbourne. {Homogenization for deterministic maps and
  multiplicative noise}. \emph{Proc. R. Soc. London A} \textbf{469} (2013)
  20130201.

\bibitem{Gouezel04}
S.~Gou{\"e}zel. {Central limit theorem and stable laws for intermittent maps}.
  \emph{Probab. Theory Relat. Fields} \textbf{128} (2004) 82--122.

\bibitem{Heyde75}
C.~C. Heyde. On the central limit theorem and iterated logarithm law for
  stationary processes. \emph{Bull. Austral. Math. Soc.} \textbf{12} (1975)
  1--8.

\bibitem{HofbauerKeller82}
F.~Hofbauer and G.~Keller. {Ergodic properties of invariant measures for
  piecewise monotonic transformations}. \emph{Math. Z.} \textbf{180} (1982)
  119--140.

\bibitem{Hu04}
H.~Hu. Decay of correlations for piecewise smooth maps with indifferent fixed
  points. \emph{Ergodic Theory Dynam. Systems} \textbf{24} (2004) 495--524.

\bibitem{JakubowskiMeminPages89}
A.~Jakubowski, J.~M{\'e}min and G.~Pag{\`e}s. Convergence en loi des suites
  d'int\'egrales stochastiques sur l'espace {${\bf D}^1$} de {S}korokhod.
  \emph{Probab. Theory Related Fields} \textbf{81} (1989) 111--137.

\bibitem{Keller80}
G.~Keller. {Un th{\'e}or{\`e}me de la limite centrale pour une classe de
  transformations monotones per morceaux}. \emph{C. R. Acad. Sci. Paris}
  \textbf{291} (1980) 155--158.

\bibitem{KM16}
D.~Kelly and I.~Melbourne. {Smooth approximation of stochastic differential
  equations}. \emph{Ann. Probab.} \textbf{44} (2016) 479--520.

\bibitem{KM17}
D.~Kelly and I.~Melbourne. Homogenization for deterministic fast-slow systems
  with multidimensional multiplicative noise. \emph{J. Funct. Anal.}
  \textbf{272} (2017) 4063--4102.

\bibitem{KipnisVaradhan86}
C.~Kipnis and S.~R.~S. Varadhan. Central limit theorem for additive functionals
  of reversible {M}arkov processes and applications to simple exclusions.
  \emph{Comm. Math. Phys.} \textbf{104} (1986) 1--19.

\bibitem{KKM18}
A.~Korepanov, Z.~Kosloff and I.~Melbourne. Martingale-coboundary decomposition
  for families of dynamical systems. \emph{Ann. Inst. H. Poincar\'e Anal. Non
  Lin\'eaire} \textbf{35} (2018) 859--885.

\bibitem{KKMsub}
A.~Korepanov, Z.~Kosloff and I.~Melbourne.
  Deterministic homogenization under
  optimal moment assumptions for fast-slow systems. Part 1. To appear in
  \emph{Ann. Inst. H. Poincar\'e Probab. Statist.} 

\bibitem{KurtzProtter91}
T.~G. Kurtz and P.~Protter. Weak limit theorems for stochastic integrals and
  stochastic differential equations. \emph{Ann. Probab.} \textbf{19} (1991)
  1035--1070.

\bibitem{Liverani96}
C.~Liverani. {Central limit theorem for deterministic systems}.
  \emph{{International Conference on Dynamical Systems}} (F.~Ledrappier,
  J.~Lewowicz, and S.~Newhouse, eds.), Pitman Research Notes in Math.
  \textbf{362}, Longman Group Ltd, Harlow, 1996, pp.~56--75.

\bibitem{LSV99}
C.~Liverani, B.~Saussol and S.~Vaienti. {A probabilistic approach to
  intermittency}. \emph{Ergodic Theory Dynam. Systems} \textbf{19} (1999)
  671--685.

\bibitem{Lyons98}
T.~J. Lyons. Differential equations driven by rough signals. \emph{Rev. Mat.
  Iberoamericana} \textbf{14} (1998) 215--310.

\bibitem{MaxwellWoodroofe00}
M.~Maxwell and M.~Woodroofe. Central limit theorems for additive functionals of
  {M}arkov chains. \emph{Ann. Probab.} \textbf{28} (2000) 713--724.

\bibitem{M09}
I.~Melbourne. {Decay of correlations for slowly mixing flows}. \emph{Proc.
  London Math. Soc.} \textbf{98} (2009) 163--190.

\bibitem{M09b}
I.~Melbourne. {Large and moderate deviations for slowly mixing dynamical
  systems}. \emph{Proc. Amer. Math. Soc.} \textbf{137} (2009) 1735--1741.


\bibitem{M18}
I.~Melbourne. Superpolynomial and polynomial mixing for semiflows and flows.
  \emph{Nonlinearity} \textbf{31} (2018) R268--R316.

\bibitem{MN05}
I.~Melbourne and M.~Nicol. Almost sure invariance principle for nonuniformly
  hyperbolic systems. \emph{Comm. Math. Phys.} \textbf{260} (2005) 131--146.

\bibitem{MN08}
I.~Melbourne and M.~Nicol. Large deviations for nonuniformly hyperbolic
  systems. \emph{Trans. Amer. Math. Soc.} \textbf{360} (2008) 6661--6676.

\bibitem{MS11}
I.~Melbourne and A.~Stuart. {A note on diffusion limits of chaotic skew product
  flows}. \emph{Nonlinearity} \textbf{24} (2011) 1361--1367.

\bibitem{MT02}
I.~Melbourne and A.~T{\" o}r{\" o}k. {Central limit theorems and invariance
  principles for time-one maps of hyperbolic flows}. \emph{Comm. Math. Phys.}
  \textbf{229} (2002) 57--71.

\bibitem{MV16}
I.~Melbourne and P.~Varandas. A note on statistical properties for nonuniformly
  hyperbolic systems with slow contraction and expansion. \emph{Stoch. Dyn.}
  \textbf{16} (2016) 1660012, 13 pages.

\bibitem{MerlevedePeligradUtev06}
F.~Merlev{\`e}de, M.~Peligrad and S.~Utev. Recent advances in invariance
  principles for stationary sequences. \emph{Probab. Surv.} \textbf{3} (2006)
  1--36.

\bibitem{PeligradUtev05}
M.~Peligrad and S.~Utev. A new maximal inequality and invariance principle for
  stationary sequences. \emph{Ann. Probab.} \textbf{33} (2005) 798--815.

\bibitem{PomeauManneville80}
Y.~Pomeau and P.~Manneville. Intermittent transition to turbulence in
  dissipative dynamical systems. \emph{Comm. Math. Phys.} \textbf{74} (1980)
  189--197.

\bibitem{Rio00}
E.~Rio. \emph{Th\'eorie asymptotique des processus al\'eatoires faiblement
  d\'ependants}. Math\'ematiques \& Applications (Berlin) [Mathematics \&
  Applications] \textbf{31}, Springer-Verlag, Berlin, 2000.

\bibitem{Smale67}
S.~Smale. {Differentiable dynamical systems}. \emph{Bull. Amer. Math. Soc.}
  \textbf{73} (1967) 747--817.

\bibitem{TyranKaminska05}
M.~Tyran-Kami{\'n}ska. An invariance principle for maps with polynomial decay
  of correlations. \emph{Comm. Math. Phys.} \textbf{260} (2005) 1--15.

\bibitem{Volny93}
D.~Voln\'y. Approximating martingales and the central limit theorem for
  strictly stationary processes. \emph{Stochastic Process. Appl.} \textbf{44}
  (1993) 41--74.

\bibitem{Whitt07}
W.~Whitt. Proofs of the martingale {FCLT}. \emph{Probab. Surv.} \textbf{4}
  (2007) 268--302.

\bibitem{Williams67}
R.~F. Williams. One-dimensional non-wandering sets. \emph{Topology} \textbf{6}
  (1967) 473--487.

\bibitem{Young98}
L.-S. Young. Statistical properties of dynamical systems with some
  hyperbolicity. \emph{Ann. of Math.} \textbf{147} (1998) 585--650.

\bibitem{Young99}
L.-S. Young. Recurrence times and rates of mixing. \emph{Israel J. Math.}
  \textbf{110} (1999) 153--188.

\bibitem{Zweimuller07}
R.~Zweim{\"u}ller. Mixing limit theorems for ergodic transformations. \emph{J.
  Theoret. Probab.} \textbf{20} (2007) 1059--1071.

\end{thebibliography}
\end{document}